\numberwithin{equation}{section}
\newtheorem{theorem}{Theorem}[section]
\newtheorem{lemma}[theorem]{Lemma}
\newtheorem{corollary}[theorem]{Corollary}
\newtheorem{proposition}[theorem]{Proposition}
\theoremstyle{definition}
\newtheorem{definition}[theorem]{\sc Definition}
\newtheorem{defin}[theorem]{\sc Definition}
\newtheorem{example}[theorem]{\sc Example}
\newtheorem{remark}[theorem]{\bf Remark}
\newcommand{\Imag}{{\rm Im~}}
\newcommand{\Id}{{\rm Id}}
\newcommand{\trig}{{\rm 0}}
\newcommand{\Hom}{{\rm Hom_{\mathcal O}}}
\newcommand{\Ker}{{\rm Ker~}}
\DeclareRobustCommand{\gobblefour}[4]{}
\newcommand*{\SkipTocEntry}{\addtocontents{toc}{\gobblefour}}
\begin{document}

%
%
%

\title[Algebras of holomorphic functions]{Towards Oka-Cartan theory for algebras of holomorphic functions on coverings of Stein manifolds I}

\author{A.~Brudnyi}

\address{Department of Mathematics and Statistics, University of Calgary, 
Calgary, Canada}

\email{albru@math.ucalgary.ca}

\author{D.~Kinzebulatov}

\address{The Fields Institute, Toronto, Canada}

\email{dkinzebu@fields.utoronto.ca}

\keywords{Oka-Cartan theory, algebras of holomorphic functions, coverings of complex manifolds.}

\subjclass[2010]{32A38, 32K99}

\begin{abstract}
We develop complex function theory within certain algebras of holomorphic functions
on coverings of Stein manifolds. This, in particular, includes the results on holomorphic
extension from complex submanifolds, corona type theorems, properties of divisors,
holomorphic analogs of the Peter-Weyl approximation theorem,
Hartogs type theorems, characterization of uniqueness sets.
The model examples of these algebras are: 

(1) Bohr's algebra of holomorphic almost
periodic functions on tube domains; 

(2) algebra of all fibrewise bounded holomorphic functions
(e.g., arising in the corona problem for $H^\infty$).  

Our approach is based on an extension of the classical Oka-Cartan theory to coherent-type sheaves
on the maximal ideal spaces of these algebras -- topological spaces having some features of complex manifolds.
\end{abstract}

\thanks{Research of the first and the second authors were partially supported  by NSERC}

\maketitle



\section{Introduction}
\label{introsect}

In the 1930-50s K.~Oka and H.~Cartan laid down the foundations of the modern function theory of several complex variables. In particular, they introduced the notion of a coherent sheaf and proved the following fundamental facts:

\vspace*{1mm}

(A) Every germ of a coherent sheaf $\mathcal A$ on a Stein manifold $X$ is generated by its global sections (``{\em Cartan theorem A}``).

(B) The sheaf cohomology groups $H^i(X,\mathcal A)$ ($i \geq 1$) are trivial (``{\em Cartan theorem B}``).

\vspace*{1mm}

Let us recall that a sheaf of modules over the sheaf of germs of holomorphic functions on $X$ is called \textit{coherent} if locally both the sheaf and its sheaf of relations are finitely generated. The class of coherent sheaves is closed under natural operations. 
Most sheaves that arise in complex analysis are coherent, see, e.g., \cite{GrRe} for details.

A {\em Stein manifold} is a complex manifold that admits a proper holomorphic embedding into some $\mathbb C^n$.

Cartan theorems A and B together with their numerous corollaries constitute the so-called Oka-Cartan theory of Stein manifolds. Applying these theorems one obtains solutions (in algebra $\mathcal O(X)$ of holomorphic functions on  a Stein manifold $X$) to all classical problems of function theory of several complex variables (such as Cousin problems, the Poincar\'{e} problem, the Levi problem, the problem of holomorphic extension from complex analytic subsets, corona problems and many others, see, e.g., \cite{GrRe}).

Further development of complex function theory was motivated, in part, by the problems requiring to study 
properties of holomorphic functions satisfying special conditions (e.g.,~certain growth conditions `at infinity').
As a result, the questions of whether the problems of the classical complex function theory can be solved within a proper subclass of $\mathcal O(X)$, e.g., consisting of holomorphic $L^p$-functions on $X$, $1 \leq p \leq \infty$, with respect to a suitable measure, started to play an important role (cf.~the pioneering papers of L.~H\"{o}rmander, J.J.~Kohn, C.~Morrey).
However, trying to incorporate in the proofs such conditions as $L^p$-summability, the classical Oka-Cartan theory encounters considerable difficulties. In particular, one has to amplify the sheaf-theoretic methods of Oka-Cartan, e.g., by integral representation formulas on complex manifolds, estimates for solutions of $\bar{\partial}$-equations, etc (see \cite{HL}).

Nevertheless, in some cases the methods of the Oka-Cartan theory can be extended to work within some special classes of holomorphic functions. The present paper studies one of these cases.

\begin{defin}
A holomorphic function $f$ defined on a regular covering $p:X \rightarrow X_0$ of a connected complex manifold $X_0$ with a deck transformation group $G$ is called a holomorphic $\mathfrak a$-function if 

(1) $f$ is bounded on subsets $p^{-1}(U_0)$, $U_0 \Subset X_0$, and

(2) for each $x \in X$ the function $G\ni g\mapsto f(g \cdot x)$ belongs to
a fixed closed unital 
subalgebra $\mathfrak a:=\mathfrak a(G)$ of the algebra $\ell_\infty(G)$ of bounded complex functions on $G$ (with pointwise multiplication and $\sup$-norm) that is invariant with respect to the action of $G$ on $\mathfrak a$ by right translations: 
$$
u \in \mathfrak a,~~g \in G \quad \Rightarrow \quad R_g u \in \mathfrak a,
$$
where $R_g(u)(h):=u(h g)$, $h\in G$.


We endow the subalgebra $\mathcal O_{\mathfrak a}(X) \subset \mathcal O(X)$ of holomorphic $\mathfrak a$-functions with the Fr\'{e}chet topology of uniform convergence on subsets $p^{-1}(U_0)$,  $U_0\Subset X_0$.

\end{defin}

The model examples of algebras $\mathfrak a$ and $\mathcal O_{\mathfrak a}(X)$ are given in Examples \ref{holap} and \ref{firstex} below.

In the present paper we obtain analogs of Cartan theorems A and B for coherent-type sheaves on the fibrewise compactification $c_{\mathfrak a}X$ of the covering $X$ of a Stein manifold $X_0$, a topological space having certain features of a complex manifold (see Definition \ref{compdefin} below). 
In our proofs we use some results and methods of the theory of coherent-type sheaves taking values in Banach or Fr\'{e}chet spaces, pioneered by E.~Bishop and L.~Bungart \cite{Bu,Bu2} and developed further by J.~Leiterer  \cite{Lt} (on Stein spaces), L.~Lempert \cite{Lemp} (on pseudoconvex subsets of Banach spaces with unconditional bases) and others. The constructions in \cite{Lemp} play particularly important role in our proofs.

In the second part of the work \cite{BK9}
we use our Cartan type theorems A and B to derive within subalgebra $\mathcal O_{\mathfrak a}(X)$ the basic results of complex function theory, including holomorphic extension from complex submanifolds, Cousin problems, properties of divisors, corona-type theorems, holomorphic Peter-Weyl-type approximation theorems, Hartogs-type theorems, describe uniqueness sets of holomorphic $\mathfrak a$-functions, etc.

\begin{example}[\textit{Holomorphic almost periodic functions}]
\label{holap}
The theory of almost periodic functions was created in the 1920s by H.~Bohr, who intended to apply it in the study of the study of the distribution of zeros of zeta-function in the critical strip. Nowadays almost periodic functions are used in in many areas of mathematics, including partial differential equations (e.g.,~KdV equation), harmonic analysis and number theory. A turning point in understanding of the nature of \textit{continuous almost periodic functions on} $\mathbb R$ came with the discovery of the so-called Bohr compactification of $\mathbb R$:
according to S.~Bochner, it quickly led to ``a sobering realization'' that the basic results of Bohr's theory on $\mathbb R$ can be deduced from Weyl's general theory of continuous functions on compact groups. In the same way, our work demonstrates that the basic results of the theory of \textit{holomorphic almost periodic functions} in tube domains follow from our Oka-Cartan type theory. The latter allows us to apply the modern methods of multidimensional complex function theory to holomorphic almost periodic functions, and obtain new results even in this classical setting.


Let us recall that a function $f \in \mathcal O(T)$ on a tube domain $T=\mathbb R^n+i\Omega \subset \mathbb C^n$, $\Omega \subset \mathbb R^n$ is open and convex, is called holomorphic almost periodic if the family of its translates $\{z \mapsto f(z+s),~z \in T\}_{s \in \mathbb R^n}$ is relatively compact in the topology of uniform convergence on tube subdomains $T'=\mathbb R^n+i\Omega'$, $\Omega' \Subset \Omega$. 
The cornerstone of Bohr's theory (see \cite{Bo}) is his approximation theorem stating that every holomorphic almost periodic function
is uniform limit (on tube subdomains $T'$ of $T$) of exponential polynomials
\begin{equation}
\label{expoly}
z\mapsto\sum_{k=1}^m c_ke^{i \langle z,\lambda_k\rangle}, \quad z\in T,\quad c_k \in \mathbb C, \quad \lambda_k \in \mathbb R^n,
\end{equation}
where $\langle\cdot,\cdot\rangle$ is the Hermitian inner product on $\mathbb C^n$.

The classical approach to study of holomorphic almost periodic functions exploits the fact that $T$ is the trivial bundle with base $\Omega$ and fibre $\mathbb R^n$ (e.g.,~as in the characterization of almost periodic functions in terms of their Jessen functions defined on $\Omega$, see, e.g.,~\cite{Sh,Lv,JT,Rn,FR,Torn}). 
By considering $T$ as a regular covering $p:T \rightarrow T_0\, (:=p(T) \subset \mathbb C^n)$ with the deck transformation group $\mathbb Z^n$, 
\begin{equation*}
p(z):=\bigl(e^{i z_1}, \dots, e^{i z_n}\bigr), \quad z=(z_1,\dots,z_n) \in T
\end{equation*}
(a complex strip covering an annulus if $n=1$), we obtain 

\begin{theorem}[\cite{BK9}]
\label{equivapthm} 
A function $f \in \mathcal O(T)$ is almost periodic
if and only if $f \in \mathcal O_{AP}(T)$.
\end{theorem}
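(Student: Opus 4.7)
The plan is to verify both directions of the equivalence directly from the definitions. With $\mathfrak a=AP(\mathbb Z^n)$ the subalgebra of almost periodic sequences in $\ell_\infty(\mathbb Z^n)$, the deck group $\mathbb Z^n$ acts on $T$ by $z\mapsto z+2\pi k$, and tube subdomains $T'=\mathbb R^n+i\Omega'$ with $\Omega'\Subset\Omega$ are $\mathbb Z^n$-invariant in the real direction; up to $\mathbb Z^n$-saturation these are exactly the sets $p^{-1}(U_0)$ for $U_0\Subset T_0$, so condition (1) of an $\mathfrak a$-function reads as boundedness of $f$ on every such $T'$.

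For the implication ``$f$ holomorphic a.p.\ $\Rightarrow f\in\mathcal O_{AP}(T)$'', relative compactness of $\{f(\cdot+s):s\in\mathbb R^n\}$ in $C_b(T')$ immediately yields (1). For (2), fix $z_0\in T$ and a tube subdomain $T'\ni z_0$; then the entire $\mathbb Z^n$-orbit $\{z_0+2\pi k:k\in\mathbb Z^n\}$ lies in $T'$. For any sequence $l_j\in\mathbb Z^n$ I extract a subsequence with $f(\cdot+2\pi l_{j'})\to g$ uniformly on $T'$, whence $\sup_{k\in\mathbb Z^n}|f(z_0+2\pi(k+l_{j'}))-g(z_0+2\pi k)|\to 0$; so the $\mathbb Z^n$-translates of $k\mapsto f(z_0+2\pi k)$ form a relatively compact set in $\ell_\infty(\mathbb Z^n)$, placing this sequence in $AP(\mathbb Z^n)$.

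For the converse, given $f\in\mathcal O_{AP}(T)$ I fix $T'\Subset T''\Subset T$; condition (1) bounds $f$ on $T''$, so Cauchy estimates make $f$ Lipschitz on $T''$ with constants invariant under real translation. For a sequence $s_j\in\mathbb R^n$ I decompose $s_j=2\pi l_j+r_j$ with $r_j\in[0,2\pi]^n$, pass to a subsequence with $r_j\to r$, and conclude $f(\cdot+r_j)\to f(\cdot+r)$ uniformly on $T'$ by uniform continuity. After replacing $f$ with $f(\cdot+r)$ the problem reduces to showing that $\{f(\cdot+2\pi l)\}_{l\in\mathbb Z^n}$ is relatively compact in $C_b(T')$.

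The decisive step, and what I expect to be the main obstacle, is to upgrade the pointwise almost periodicity in (2) to uniform almost periodicity on the noncompact $T'$. Given $\epsilon>0$, I choose $\delta>0$ from uniform continuity on $T''$ so that $|z-w|<\delta$ in $T''$ gives $|f(z)-f(w)|<\epsilon/3$. The real-translation invariance makes $K:=[0,2\pi]^n+i\overline{\Omega'}$ a compact fundamental domain (contained in $T''$) for the $\mathbb Z^n$-action on $T'$; I take a finite $\delta$-net $z_1,\dots,z_N\in K$. By (2) each $l\mapsto f(z_\alpha+2\pi l)$ is almost periodic, and finitely many almost periodic scalar sequences assemble into an almost periodic $\mathbb C^N$-valued sequence; Bohr's theorem then supplies $L>0$ such that every $L$-cube in $\mathbb Z^n$ contains an $l$ with $\sup_{k\in\mathbb Z^n,\,\alpha}|f(z_\alpha+2\pi(k+l))-f(z_\alpha+2\pi k)|<\epsilon/3$. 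For any $z\in T'$, writing $z=z'+2\pi k$ with $z'\in K$ and picking $z_\alpha$ within $\delta$ of $z'$, a three-term telescoping (two uniform-continuity bounds plus the net bound, all four points lying in $T''$) yields $|f(z+2\pi l)-f(z)|<\epsilon$. Hence $\epsilon$-almost periods of $\{f(\cdot+2\pi l)\}_l$ in $C_b(T')$ are relatively dense in $\mathbb Z^n$, which by Bohr's criterion is equivalent to the desired relative compactness.
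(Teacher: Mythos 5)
Your proof is correct. Note first that this paper only states Theorem \ref{equivapthm} and cites its proof to the companion paper \cite{BK9}, so there is no in-text argument to compare against; your write-up is a self-contained, classical proof. The forward direction (relative compactness of real translates on a tube subdomain restricts, by evaluating along the $2\pi\mathbb Z^n$-orbit of a fixed point, to relative compactness of integer translates in $\ell_\infty(\mathbb Z^n)$) is routine and you handle it cleanly. The converse is where the content lies, and you correctly identify and resolve the crux: condition (2) is only pointwise in $z$, and the upgrade to uniformity over the noncompact $T'$ is achieved by the standard Bohr--Bochner mechanism --- a compact fundamental domain $K=[0,2\pi]^n+i\overline{\Omega'}$ for the deck action, equicontinuity on a slightly larger tube via Cauchy estimates (uniform under real translations because $T''$ is $\mathbb R^n$-invariant), a finite $\delta$-net in $K$, common relatively dense $\varepsilon$-almost periods for the finitely many almost periodic sequences attached to the net points, and a three-term estimate. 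Two small points you lean on without proof, both standard and both true: (i) that finitely many von Neumann a.p.\ sequences on $\mathbb Z^n$ admit a relatively dense set of common $\varepsilon$-almost periods (this follows from total boundedness of the translates of the $\mathbb C^N$-valued tuple, since covering $\mathbb Z^n$ by finitely many translates of the almost-period set forces syndeticity); and (ii) the concluding passage from relatively dense almost periods of the $C_b(T')$-valued sequence $F(l)=f(\cdot+2\pi l)$ back to relative compactness --- here one should observe that $\sup_{z\in T'}|f(z+2\pi l)-f(z)|$ is unchanged under replacing $z$ by $z+2\pi m$, so the almost-period bound gives directly that $\{F(l)\}_{l}$ lies within $\varepsilon$ of the finite set $\{F(m'):m'\in[0,L]^n\cap\mathbb Z^n\}$, hence is totally bounded in the complete space $C_b(T')$. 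With those remarks made explicit, the argument is complete.
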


Here $AP=AP(\mathbb Z^n)$ is the algebra of von Neumann's almost periodic functions on group $\mathbb Z^n$, i.e., those bounded complex functions whose families of translates are relatively compact in the topology of uniform convergence on $\mathbb Z^n$.

This result enables us to regard holomorphic almost periodic functions on $T$ as:

\begin{itemize}
\item[(a)] holomorphic sections of a certain holomorphic Banach vector bundle on $T_0$;

\item[(b)] holomorphic-type functions on the fibrewise Bohr compactification of the covering $p:T\rightarrow T_0$, a topological space having some properties of a complex manifold.
\end{itemize}



It is interesting to note that already in his monograph \cite{Bo} H.~Bohr uses equally often the aforementioned ``trivial fibre bundle'' and ``regular covering'' points of view on a complex strip. We note also that the Bohr compactification  of a tube domain $\mathbb R^n +i\Omega$ in the form $b\mathbb R^n +i\Omega$, where $b\mathbb R^n$ is the Bohr compactification of group $\mathbb R^n$, was used earlier in \cite{Fav, Fav2,Gri}.
\end{example}

\begin{example}
\label{firstex}
(1) Let $\mathfrak a:=\ell_\infty(G)$ be the algebra of all bounded complex functions on  the deck transformation group $G \cong p^{-1}(x)$, $x \in X_0$, of covering $p:X \rightarrow X_0$. 

By definition, every subalgebra $\mathcal O_{\mathfrak a}(X) \subset \mathcal O_{\ell_\infty}(X)$, $\ell_\infty:=\ell_\infty(G)$.

Algebra $\mathcal O_{\ell_\infty}(X)$ arises, e.g., in the study of holomorphic $L^2$-functions on coverings of pseudoconvex manifolds \cite{GHS,Br2, Br5,La}, Caratheodory hyperbolicity (Liouville property) of $X$ \cite{LS,Lin}, corona-type problems for bounded holomorphic functions on $X$ \cite{Br}. 
Earlier, some methods similar to those developed in the article were elaborated for algebra $\mathcal O_{\ell_\infty}(X)$ 
in \cite{Br}-\cite{Br4} in connection with corona-type problems for some subalgebras of bounded holomorphic functions on coverings of bordered Riemann surfaces, Hartogs-type theorems, integral representation of holomorphic functions of slow growth on coverings of Stein manifolds, etc; that work was motivated by the fact that if $X_0$ is compact, then $\mathcal O_{\ell_\infty}(X)=H^\infty(X)$, the algebra of all bounded holomorphic functions on $X$ (the most important cases are when $X$ is the unit ball or polydisk in $\mathbb C^n$).

A recent confirmation of potential productivity of the sheaf-theoretic approach to corona problem for $H^\infty$ comes from the recent paper \cite{BrBanach} on Banach-valued holomorphic functions on the unit disk $\mathbb D \subset \mathbb C$ having relatively compact images.

\smallskip

(2) Let $\mathfrak a:=c(G)$ (with ${\rm card~} G=\infty$) be the subalgebra of bounded complex functions on $G$ that admit continuous extensions to the one-point compactification of $G$ (here we consider $G$ equipped with discrete topology).
Then  $\mathcal O_{c}(X)$ consists of holomorphic functions having fibrewise limits at `infinity'.
\end{example}

\medskip

\noindent\textbf{Acknowledgement.~}We thank Professors T. Bloom, L. Lempert, T. Ohsawa, R. Shafikov and Y.-T. Siu for their interest to this work.

\tableofcontents

\section{Main results} 
\label{mainsect}

In some cases the maximal ideal space of algebra $\mathcal O_{\mathfrak a}(X)$ may be presented as a `fibrewise compactification' $c_{\mathfrak a}X$ of the covering $p:X \rightarrow X_0$. Now we briefly present this construction referring to Section \ref{moreoncomp} for further details. 

Let $M_{\mathfrak a}$ denote the maximal ideal space of algebra $\mathfrak a$, i.e., the space of all non-zero continuous complex homomorphisms of $\mathfrak a$ endowed with weak* topology (of $\mathfrak a^*$). 
The space $M_{\mathfrak a}$ is compact and Hausdorff, and
every element $f$ of $\mathfrak a$ determines a function $\hat{f} \in C(M_{\mathfrak a})$ by the formula
$$\hat{f}(\eta):=\eta(f), \quad \eta \in M_{\mathfrak a}.$$
Since algebra $\mathfrak a$ is uniform (i.e., $\|f^2\|=\|f\|^2$) and hence is semi-simple, the homomorphism $\hat{}: \mathfrak a \rightarrow C(M_{\mathfrak a})$ (called the {\em Gelfand transform}) is an isometric embedding (see, e.g., \cite{Gam}).
We have a continuous map $j=j_{\mathfrak a}:G \rightarrow M_{\mathfrak a}$ defined by 
associating to each point in $G$ its point evaluation homomorphism in $M_{\mathfrak a}$. 
This map is an injection if and only if algebra $\mathfrak a$ separates points of $G$.

Let $\hat{G}_{\mathfrak a}$ denote the closure of $j(G)$ in $M_{\mathfrak a}$. 
If algebra $\mathfrak a$ is self-adjoint (i.e., closed with respect to complex conjugation), then $\hat{}:\mathfrak a \rightarrow C(M_{\mathfrak a})$ is an isomorphism and hence $\hat{G}_{\mathfrak a}=M_{\mathfrak a}$.
The (right) action of group $G$ on itself by right multiplication induces the right action of $G$ on $M_{\mathfrak a}$ by the formula
$$\hat{R}_{g}(\eta)(f):=\eta(R_g(f)), \quad \eta \in M_{\mathfrak a}, \quad f \in \mathfrak a, \quad g \in G.$$
Then 
\begin{equation}
\label{jgact}
\hat{R}_g(j(h))=j(hg), \quad h,g \in G.
\end{equation}

The regular covering $p:X \rightarrow X_0$ can be viewed as a principal fibre bundle on $X_0$ with structure group $G$, that is there exists an open cover $(U_{0,\gamma})_{\gamma\in\Gamma}$ of $X_0$ and a locally constant cocycle $c=\{c_{\delta\gamma}:U_{0,\gamma} \cap U_{0,\delta} \rightarrow G\}_{\delta,\gamma\in\Gamma}$ so that the covering $p:X \rightarrow X_0$ can be obtained from the disjoint union 
$\sqcup_{\gamma}U_{0,\gamma} \times G$ by the identification
\begin{equation}
\label{extrem2}
U_{0,\delta} \times G \ni (x,g) \sim (x,g\cdot c_{\delta\gamma}(x)) \in U_{0,\gamma} \times G \quad \text{ for all }\quad x\in U_{0,\gamma} \cap U_{0,\delta},
\end{equation}
where projection $p$ is induced by the projections $U_{0,\gamma} \times G\rightarrow U_{0,\gamma}$ (see, e.g., \cite{Hirz}).

\begin{defin}
\label{compdefin}
The fibrewise ($\mathfrak a$-) compactification $\bar{p}:c_{\mathfrak a}X \rightarrow X_0$ is the fibre bundle on $X_0$ with fibre $\hat{G}_{\mathfrak a}$ associated to the principal bundle $p:X\rightarrow X_0$, i.e., $c_{\mathfrak a}X$ is obtained from the disjoint union $\sqcup_{\gamma}U_{0,\gamma} \times \hat{G}_{\mathfrak a}$ by the identification $$U_{0,\gamma} \times \hat{G}_{\mathfrak a} \ni (x,\omega) \sim (x,\hat{R}_{c_{\delta\gamma}(x)}(\omega) ) \in U_{0,\delta} \times \hat{G}_{\mathfrak a}, \quad \text{ for all }\quad x \in U_{0,\gamma} \cap U_{0,\delta},$$
where $\bar{p}$ is induced by projections $U_{0,\gamma}\times \hat{G}_{\mathfrak a}\rightarrow U_{0,\gamma}$.
\end{defin}

In \cite[Thm.~5.18]{BK9} we show that if subalgebra $\mathfrak a$ is self-adjoint and $X_0$ is a Stein manifold, then the maximal ideal space of algebra $\mathcal O_{\mathfrak a}(X)$ (i.e., the set of non-zero continuous complex homomorphisms of $\mathcal O_{\mathfrak a}(X)$ endowed with weak* topology of $\mathcal O_{\mathfrak a}(X)^*$) is homeomorphic to $c_{\mathfrak a}X$. (In particular, this is applied to algebras of Examples \ref{holap} and \ref{firstex}.)

Next, there exists a continuous map 
\begin{equation}
\label{iotamap}
\iota=\iota_{\mathfrak a}:X \rightarrow c_{\mathfrak a}X
\end{equation}
induced by the equivariant (with respect to the corresponding actions of $G$ on $G$ and $\hat{G}_{\mathfrak a}$) map $j$. Clearly, $\iota(X)$ is dense in $c_{\mathfrak a}X$ (thus, if $X_0$ is Stein and $\mathfrak a$ is self-adjoint, we have a corona-type theorem for algebra $\mathcal O_{\mathfrak a}(X)$ as its maximal ideal space is homeomorphic to $c_{\mathfrak a}X$, see \cite{BK9}). The map  $\iota$ is an injection if and only if $\mathfrak a$ separates points of $G$.

\begin{defin}\label{def2.2}
A function $f \in C(c_{\mathfrak a}X)$ is called {\em holomorphic} if its pullback $\iota^*f$ is holomorphic on $X$. 
The algebra of functions holomorphic on $c_{\mathfrak a}X$ is denoted by $\mathcal O(c_{\mathfrak a}X)$.
\end{defin}

\begin{proposition}
\label{basicpropthm}
The following is true:
\begin{itemize}

\item[(1)]
A function $f$ in $\mathcal O_{\mathfrak a}(X)$ determines a unique function $\hat{f}$ in $\mathcal O(c_{\mathfrak a}X)$ such that $\iota^*\hat{f}=f$.
Thus, there is a continuous embedding
$\mathcal O_{\mathfrak a}(X) \hookrightarrow \mathcal O(c_{\mathfrak a}X).$

\item[(2)]If $\mathfrak a$ is self-adjoint, then the correspondence $f\mapsto\hat{f}$ determines an isomorphism of algebras:
$\mathcal O_{\mathfrak a}(X) \cong \mathcal O(c_{\mathfrak a}X)$.
\end{itemize}
\end{proposition}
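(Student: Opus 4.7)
The natural approach is to work in local trivializations $\varphi_\gamma:p^{-1}(U_{0,\gamma})\to U_{0,\gamma}\times G$ of the principal bundle \eqref{extrem2}. Setting $f_\gamma:=f\circ\varphi_\gamma^{-1}$ and $u_\gamma^x(g):=f_\gamma(x,g)$, one checks (by applying condition~(2) of the definition of an $\mathfrak a$-function to the point $\xi_0$ with $\varphi_\gamma(\xi_0)=(x,e)$) that $u_\gamma^x\in\mathfrak a$. Via the Gelfand transform I then define $\hat f_\gamma(x,\omega):=\omega(u_\gamma^x)$ on $U_{0,\gamma}\times\hat G_{\mathfrak a}$. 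Gluing is a direct computation: on the dense subset $j(G)\subset\hat G_{\mathfrak a}$ one has, using \eqref{jgact} and the transition rule \eqref{extrem2},
$\hat f_\delta(x,\hat R_{c_{\delta\gamma}(x)}j(h))=\hat f_\delta(x,j(h\cdot c_{\delta\gamma}(x)))=f_\delta(x,h\cdot c_{\delta\gamma}(x))=f_\gamma(x,h)=\hat f_\gamma(x,j(h))$, so continuity in $\omega$ extends the equality to all overlaps, and the local pieces assemble into a global $\hat f\in C(c_{\mathfrak a}X)$ with $\iota^*\hat f=f$. Uniqueness is immediate from the density of $\iota(X)$ in $c_{\mathfrak a}X$.

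\textbf{The main obstacle.} The hard part is joint continuity of $\hat f_\gamma$ in $(x,\omega)$. By the character bound $\|\omega\|\leq 1$, it is enough to bound
$|\omega(u_\gamma^x)-\omega'(u_\gamma^{x'})|\leq \|u_\gamma^x-u_\gamma^{x'}\|_{\mathfrak a}+|\hat u_\gamma^{x'}(\omega)-\hat u_\gamma^{x'}(\omega')|$,
so everything reduces to showing norm continuity of $x\mapsto u_\gamma^x$ into $\mathfrak a\subset\ell_\infty(G)$. To get this I use that each $x\mapsto f_\gamma(x,g)$ is holomorphic and that the family $\{f_\gamma(\cdot,g)\}_{g\in G}$ is \emph{uniformly} bounded on $U_{0,\gamma}$ by condition~(1) of the definition. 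Cauchy estimates on any $V\Subset U_{0,\gamma}$ then give uniformly bounded first derivatives over $g\in G$, hence the required equicontinuity of the family and norm continuity of $x\mapsto u_\gamma^x$ on $V$. The same character bound yields $\|\hat f\|_{\bar p^{-1}(V)}\leq\|f\|_{p^{-1}(V)}$, from which continuity of the embedding $\mathcal O_{\mathfrak a}(X)\hookrightarrow\mathcal O(c_{\mathfrak a}X)$ follows.

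\textbf{Part~(2).} Given $h\in\mathcal O(c_{\mathfrak a}X)$, the pullback $f:=\iota^*h$ is holomorphic by Definition~\ref{def2.2}; it is bounded on each $p^{-1}(U_0)$, $U_0\Subset X_0$, because $\bar p^{-1}(\overline{U_0})$ is compact (compact fibre $\hat G_{\mathfrak a}$ over the compact base $\overline{U_0}$). For $\xi\in X$ with $\varphi_\gamma(\xi)=(x_0,e)$, the function $g\mapsto f(g\cdot\xi)$ coincides with $g\mapsto h_\gamma(x_0,j(g))$, where $h_\gamma(x_0,\cdot)\in C(\hat G_{\mathfrak a})$. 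Since $\mathfrak a$ is self-adjoint, the excerpt supplies $\hat G_{\mathfrak a}=M_{\mathfrak a}$ and the isomorphism $\hat{}:\mathfrak a\to C(M_{\mathfrak a})$, so $h_\gamma(x_0,\cdot)=\hat u$ for some $u\in\mathfrak a$ and $g\mapsto f(g\cdot\xi)=u(g)\in\mathfrak a$. Hence $f\in\mathcal O_{\mathfrak a}(X)$, and part~(1) together with uniqueness of the extension forces $\hat f=h$, giving surjectivity. Continuity of $h\mapsto\iota^*h$ is manifest (sup norms only decrease under pullback), and the open mapping theorem for Fr\'echet spaces promotes the algebraic isomorphism to a topological one.
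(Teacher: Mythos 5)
Your proposal is correct and follows essentially the same route as the paper: define $\hat f$ fibrewise through the Gelfand transform $u\mapsto\hat u$, check compatibility on the dense set $j(G)$, and in part~(2) use $\mathfrak a\cong C(\hat G_{\mathfrak a})$ to see that $\iota^*$ inverts the embedding. The only difference is that where the paper disposes of the continuity of $\hat f$ with a one-line appeal to a ``normal family argument'' (citing [Lin] and [BrK2]), you spell that step out via Cauchy estimates and equicontinuity of $\{f_\gamma(\cdot,g)\}_{g\in G}$, which is exactly the content of that argument.
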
 

So, for $\mathfrak a$ self-adjoint we can work with algebra $\mathcal O(c_{\mathfrak a}X)$ instead of $\mathcal O_{\mathfrak a}(X)$.


\begin{definition}
Let $U \subset c_{\mathfrak a}X$ be an open subset. 
A function $f \in C(U)$ is called {\em holomorphic} if $\iota^*f \in \mathcal O\bigl(\iota^{-1}(U)\bigr)$. 
The algebra of functions holomorphic on $U$ is denoted by $\mathcal O(U)$.
\end{definition}

Thus, we obtain the structure sheaf $\mathcal O:=\mathcal O_{c_{\mathfrak a}X}$ of germs of holomorphic functions on $c_{\mathfrak a}X$. 
Now, a {\em coherent sheaf} $\mathcal A$ on $c_{\mathfrak a}X$ is a sheaf of modules over $\mathcal O$ such that every point in $c_{\mathfrak a}X$ has an open neighbourhood $U$ over which, for any $N \geq 1$, there is a free resolution of $\mathcal A$ of length $N$, i.e., an exact sequence of sheaves of modules of the form
\begin{equation}
\label{coh0}
\mathcal O^{m_{N}}|_U \overset{\varphi_{N-1}}{\to} \dots \overset{\varphi_2}{\to} \mathcal O^{m_{2}}|_U \overset{\varphi_1}{\to} \mathcal O^{m_{1}}|_U \overset{\varphi_0}{\to} \mathcal A|_U \to 0,
\end{equation}
where $\varphi_i$, $0 \leq i \leq N-1$, are homomorphisms of sheaves of $\mathcal O$-modules.

If $X=X_0$ and $p=\Id$, then  this definition gives the classical definition of a coherent sheaf on a complex manifold. 

Our main results are stated as follows.

\medskip

Let $X_0$ be a Stein manifold, $\mathfrak a$ be self-adjoint, $\mathcal A$ be a coherent sheaf on $c_{\mathfrak a}X$.

\begin{theorem}[Cartan A]
\label{thmA}

Each stalk $\phantom{}_{x}\mathcal A$ ($x \in c_{\mathfrak a}X$) is generated as an $\phantom{}_{x}\mathcal O$-module by sections $\Gamma(c_{\mathfrak a}X,\mathcal A)$. 
\end{theorem}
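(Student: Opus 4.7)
The plan is to reduce Theorem \ref{thmA} to the Cartan A theorem for Banach-coherent analytic sheaves on the Stein manifold $X_0$, a context in which such theorems are available through the works of Bishop, Bungart \cite{Bu,Bu2}, Leiterer \cite{Lt}, and -- most crucially for the techniques employed in this paper -- Lempert \cite{Lemp}. Since $\mathfrak a$ is self-adjoint, the Gelfand transform identifies $\mathfrak a$ with the Banach algebra $C(\hat G_{\mathfrak a})$, and on each trivializing neighbourhood $\bar p^{-1}(U_{0,\gamma}) \cong U_{0,\gamma} \times \hat G_{\mathfrak a}$ a holomorphic function on $c_{\mathfrak a}X$ corresponds to a holomorphic map $U_{0,\gamma} \to \mathfrak a$. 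Thus the push-forward $\bar p_*\mathcal O_{c_{\mathfrak a}X}$ is locally the sheaf of $\mathfrak a$-valued holomorphic functions on $X_0$, globally twisted by the cocycle $c_{\delta\gamma}$ via the action $\hat R$.

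Having set this up, I would fix $x \in c_{\mathfrak a}X$, put $x_0 := \bar p(x)$, and choose a Stein polydomain $V$ about $x_0$ over which $\bar p$ trivializes. Because (by the definition of coherence) $\mathcal A$ admits free resolutions (\ref{coh0}) of arbitrary length on $\bar p^{-1}(V)$, applying $\bar p_*$ presents $\bar p_*\mathcal A$ over $V$ as a Banach-coherent analytic sheaf in the sense of \cite{Bu,Lt,Lemp}. The Banach-coherent Cartan A theorem for Stein manifolds then produces global sections $\sigma_1,\dots,\sigma_m \in \Gamma(X_0, \bar p_*\mathcal A) = \Gamma(c_{\mathfrak a}X, \mathcal A)$ whose germs generate the push-forward stalk $(\bar p_*\mathcal A)_{x_0}$ as a $(\bar p_*\mathcal O)_{x_0}$-module. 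To upgrade this to generation of $\phantom{}_{x}\mathcal A$ over $\phantom{}_{x}\mathcal O$ I would exploit exactness of (\ref{coh0}): the canonical localization map at $x \in \hat G_{\mathfrak a}$ is compatible with the resolution, so generation of the push-forward stalk transfers to generation of the fibrewise stalk at $x$, completing the argument.

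The principal obstacle is ensuring that the push-forward of a coherent sheaf on $c_{\mathfrak a}X$ genuinely falls within the scope of the Banach-coherent Cartan theorems. This is precisely what motivates the requirement in the definition of coherence that free resolutions (\ref{coh0}) exist of \emph{every} length $N$: in the absence of an Oka-type coherence theorem for $\mathcal O_{c_{\mathfrak a}X}$, such deep syzygies cannot be produced for free and must be built into the definition, and the homological constructions of \cite{Lemp} require exactly this input. A secondary subtlety is controlling the stalks at \emph{ideal} points $x \in \hat G_{\mathfrak a} \setminus j(G)$, which lack a direct complex-manifold description; here the exactness of (\ref{coh0}) evaluated at $x$, together with density of $\iota(X)$ in $c_{\mathfrak a}X$, does the work.
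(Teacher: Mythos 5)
Your overall strategy---push $\mathcal A$ down to $X_0$, recognize $\bar p_*\mathcal A$ as a Banach coherent analytic Fr\'{e}chet sheaf, and import results from the Stein theory---is indeed how the paper proves Theorem \ref{thmB} and the Runge-type Theorem \ref{cechtriv}. But for Theorem \ref{thmA} your proposal has a genuine gap at its central step: the ``Banach-coherent Cartan A theorem'' you invoke, in the form you need it (finitely many global sections whose germs generate $(\bar p_*\mathcal A)_{x_0}$ \emph{algebraically} as a module over the big local ring $(\bar p_*\mathcal O)_{x_0}$), is not an off-the-shelf result of \cite{Bu,Bu2,Lt,Lemp}. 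What Leiterer's Theorem 2.3 supplies---and what the paper actually uses---is cohomology vanishing and a Runge-type density statement for sections. For infinite-dimensional sheaves, generation of stalks by global sections can at best be expected in a topological (density) sense over the scalar structure sheaf $\mathcal O_{X_0}$, and density is strictly weaker than the algebraic generation over $\phantom{}_{x}\mathcal O$ that Theorem \ref{thmA} asserts. The paper bridges exactly this gap with a perturbation argument: local generators $h_1,\dots,h_{m_1}$ of $\phantom{}_{x}\mathcal A$ coming from the free resolution are approximated, in the Fr\'{e}chet semi-norms of Proposition \ref{frechetprop}, by global sections $f_1,\dots,f_{m_1}$ (Theorem \ref{cechtriv}, case (2)); both families are lifted to $\Gamma(V,\mathcal O^{m_1})$ using exactness of the section sequence (Proposition \ref{vanprop}(1)); and since the resulting matrix $F$ is uniformly close to the identity it is invertible, i.e.\ $F\in\mathcal O\bigl(V,GL_{m_1}(\mathbb C)\bigr)$, whence $h_i=\sum_j r_{ij}f_j$ with $r_{ij}\in\mathcal O(V)$. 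Without this step, or an equivalent one, your argument does not close.

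Two secondary points. First, the definition of coherence only guarantees free resolutions on \emph{some} neighbourhood of each point of $c_{\mathfrak a}X$, not on full preimages $\bar p^{-1}(V)$; obtaining resolutions over $\bar p^{-1}(V)$ is Proposition \ref{cohcor}, whose proof (amalgamation of resolutions over the fibre via Lemma \ref{joinprop} and a Cartan-type matrix factorization) is one of the hard technical parts of the paper, so it cannot be taken as ``by the definition of coherence.'' Second, your passage from the push-forward stalk at $x_0$ to the fibrewise stalk at $x$ is asserted as ``compatibility of localization''; the actual mechanism is multiplication by a cutoff function $\rho\in C(\hat{G}_{\mathfrak a})$ identically $1$ near the fibre coordinate of $x$ (this is precisely how case (2) of Theorem \ref{cechtriv} is proved), not the density of $\iota(X)$ in $c_{\mathfrak a}X$, which plays no role in this argument.
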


\begin{theorem}[Cartan B]
\label{thmB}
Sheaf cohomology groups $H^i(c_{\mathfrak a}X,\mathcal A)=0$ for all $i \geq 1$.
\end{theorem}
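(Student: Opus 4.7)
The strategy is to reduce Theorems A and B to the Oka--Cartan theory for coherent sheaves of Banach-space-valued holomorphic functions on Stein manifolds, developed by Bungart, Leiterer and Lempert, and then deduce Cartan A from Cartan B by a standard argument. First I would unravel the local structure of $c_{\mathfrak a}X$: refine the cover $(U_{0,\gamma})$ of $X_0$ to Stein coordinate polydisks over which the $G$-cocycle $c_{\delta\gamma}$ is locally constant, so that $\bar p^{-1}(U_{0,\gamma})$ is homeomorphic to $U_{0,\gamma}\times \hat G_{\mathfrak a}$. By self-adjointness of $\mathfrak a$ we have $\hat G_{\mathfrak a}=M_{\mathfrak a}$ and $C(\hat G_{\mathfrak a})\cong \hat{\mathfrak a}$, and $\mathcal O_{c_{\mathfrak a}X}$ restricted to this trivializing chart is canonically identified with the sheaf of holomorphic maps from $U_{0,\gamma}$ into the commutative Banach algebra $C(\hat G_{\mathfrak a})$. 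Under this identification the local free resolution \eqref{coh0} becomes a free resolution in the category of Banach-coherent sheaves over $U_{0,\gamma}$.

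Next I would establish Theorem B on a single trivializing Stein polydisk $\bar p^{-1}(U_{0,\gamma})$ via Banach-valued $\bar\partial$-theory. Bungart's theorems yield Cartan B for sheaves of the form $\mathcal O(\cdot,E)$, $E$ a Banach space, on Stein manifolds; Leiterer's refinement handles the sheaf-module setting; and Lempert's work on pseudoconvex subsets of Banach spaces with an unconditional basis supplies the finest technical input (in particular, the vanishing of Dolbeault cohomology with values in such Banach spaces realized via an unconditional Schauder decomposition). The requirement in \eqref{coh0} of free resolutions of \emph{arbitrary} finite length $N$ is precisely what allows the cohomology in any fixed positive degree to be controlled by the Banach-valued theory applied to a sufficiently long tail of the resolution, via a diagram chase against the free pieces $\mathcal O^{m_i}$.

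Passing from a single polydisk to the whole $c_{\mathfrak a}X$ is then a standard Cartan--Serre--Leray argument: the cover of $c_{\mathfrak a}X$ by pullbacks $\bar p^{-1}(U_{0,\gamma})$ of Stein polydisks in $X_0$ is Leray for $\mathcal A$ by the previous step, and a Stein exhaustion of $X_0$ combined with a Runge-type approximation (itself a consequence of Bungart's Cartan B for Banach-valued holomorphic functions on a Stein manifold) yields $H^i(c_{\mathfrak a}X,\mathcal A)=0$ for $i\geq 1$. Theorem A is derived from Theorem B in the usual way: given $x\in c_{\mathfrak a}X$ and local generators $\sigma_1,\dots,\sigma_m\in\Gamma(V,\mathcal A)$ of $\phantom{}_x\mathcal A$ near $x$, cutoff and patching across the Leray cover produce global sections agreeing with the $\sigma_j$ modulo the coherent kernel sheaf $\mathcal K=\ker(\mathcal O^m\to \mathcal A)$; the obstruction to exact agreement lies in $H^1(c_{\mathfrak a}X,\mathcal K)$, which vanishes by Theorem B applied to $\mathcal K$.

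The principal obstacle is the second step. The Banach algebra $C(\hat G_{\mathfrak a})$ is in general neither separable nor possessed of an unconditional basis, so Lempert's results do not apply off the shelf. The most delicate part of the proof is to realize, or at least approximate, the structure sheaf $\mathcal O_{c_{\mathfrak a}X}$ locally inside a pseudoconvex subset of a Banach space with unconditional basis in a way that is compatible with the $G$-action and the cocycle $c_{\delta\gamma}$; this is where the specific constructions in Lempert \cite{Lemp} become essential. Once this technical reduction is in place, the remainder of the argument follows the classical template of Oka--Cartan, with the length-$N$ resolutions playing the role that coherence-in-the-Serre-sense plays in the classical theory.
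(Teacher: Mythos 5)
Your overall architecture — localize over a Stein base, identify sections over trivializing charts with $C(\hat G_{\mathfrak a})$-valued holomorphic functions, invoke Leiterer's theory of Banach coherent analytic Fr\'echet sheaves, and globalize by a Leray/Runge argument — is the architecture of the paper's proof. But there is a genuine gap at the step you pass over in one sentence: ``Under this identification the local free resolution \eqref{coh0} becomes a free resolution in the category of Banach-coherent sheaves over $U_{0,\gamma}$.'' The definition of coherence on $c_{\mathfrak a}X$ only provides free resolutions over \emph{small} neighbourhoods of each point, which in a chart have the form $U_0\times K$ with $K$ a possibly small open subset of the fibre $\hat G_{\mathfrak a}$ — not over the full fibre preimage $\bar p^{-1}(U_0)\cong U_0\times\hat G_{\mathfrak a}$. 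Since $\hat G_{\mathfrak a}$ is compact but topologically wild (e.g.\ $\beta G$ or $bG$), gluing resolutions along the fibre direction is the central difficulty. Without it, $\bar p_*\mathcal A$ is not known to be a Banach coherent analytic Fr\'echet sheaf in Leiterer's sense, the cover $\{\bar p^{-1}(U_{0,\gamma})\}$ is not known to be Leray for $\mathcal A$, and neither your second nor your third step can start. The paper devotes Proposition \ref{cohcor} and Lemma \ref{joinprop} to exactly this: two resolutions over $U_0\times K_1$ and $U_0\times K_2$ are amalgamated by factoring a transition matrix $H\in\mathcal O(U_0\times(K_1\cap K_2),GL_k(\mathbb C))$ as $h_1h_2$ with $h_i$ holomorphic on the separate pieces (a Cartan-lemma-type factorization, Lemma \ref{cartanlem0}), and Lempert's ``Three lemma'' (Lemma \ref{lem7}) is used to keep control of resolution lengths, each gluing step consuming roughly $n+1$ terms — which is the real reason the definition demands resolutions of arbitrary length $N$, not merely to feed a diagram chase in a fixed degree.

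Relatedly, the obstacle you single out as principal — that $C(\hat G_{\mathfrak a})$ lacks separability or an unconditional basis, so Lempert's pseudoconvexity results ``do not apply off the shelf'' — is not where the paper's difficulty lies and is not resolved by embedding into a nicer Banach space. The Banach-valued $\bar\partial$ input is just the Henkin--Leiterer integral operators, which hold for forms valued in an arbitrary Banach space (Lemma \ref{hl1}); the fibre direction is tamed not by any basis structure but by continuous partitions of unity on the compact normal space $\hat G_{\mathfrak a}$, which make the fibre behave like a ``fine'' direction in the \v{C}ech computations (Lemma \ref{cechlem0}). What is borrowed from Lempert is the formal machinery of completely exact sequences and the Three lemma, not the unconditional-basis theory. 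Finally, note that even the local vanishing $H^i(U_0\times\hat G_{\mathfrak a},\mathcal A)=0$ requires a Runge condition (Definition \ref{grdef}, Lemma \ref{grrunge}) to pass from the compact exhaustion pieces to the open set in degree $1$; this is an additional verification (Lemma \ref{runge2}) your outline does not account for at the local level.
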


The classical proof of Cartan theorems A and B on complex manifolds
does not work in our framework: there is no the Oka coherence lemma, the fibre $\hat{G}_{\mathfrak a}$ (in general, having infinite covering dimension -- e.g.~for almost periodic functions the fibre is an inverse limit of tori, cf.~Example \ref{compex}(2) below)  does not admit 
an open cover by contractible sets as needed for the proof of the classical Cartan lemma, etc.

Instead, we paste together the free resolutions \eqref{coh0} of a coherent sheaf $\mathcal A$ first over  sets $\bar{p}^{-1}(U_0)$, with $U_0 \subset X_0$ being open simply connected, using continuous partition of unity in $C(\hat{G}_{\mathfrak a})~(\cong \mathfrak a)$ and employing some constructions of \cite{Lemp}. Then we paste thus obtained free resolutions of $\mathcal A$ on sets $\bar{p}^{-1}(U_0)$ to obtain the free resolutions of $\mathcal A$ over preimages by $\bar{p}$ of relatively compact open subsets of $X_0$ that form an exhausting sequence. At this step we use the results of J.~Leiterer \cite{Lt} on Banach-valued coherent sheaves on Stein manifolds. Having obtained such global free resolutions of $\mathcal A$, we complete the proof of Theorems  \ref{thmA} and \ref{thmB} as in the classical case.

\begin{remark}

(1) An important example of a coherent sheaf on $c_{\mathfrak a}X$ is given by the sheaf of ideals of germs of holomorphic functions vanishing on a complex submanifold of $c_{\mathfrak a}X$. 
To define the latter, we will need the following notation. Let $U_0 \subset X_0$ be open and simply connected, $K \subset \hat{G}_{\mathfrak a}$ be open.
We denote by  $\bar{\psi}_{U_0}$ the trivialization $\bar{p}^{-1}(U_0) \rightarrow U_0 \times \hat{G}_{\mathfrak a}$ (see~Definition \ref{compdefin}), and define
$$\hat{\Pi}(U_0,K)=\bar{\psi}_{U_0}^{-1}(K) \subset c_{\mathfrak a}X.$$
In what follows, we identify $\hat{\Pi}(U_0,K)$ with $U_0 \times K$ (see subsection \ref{charts} for details).

\begin{definition}
\label{cmplsub}
A closed subset $Y \subset c_{\mathfrak a}X$ is called a complex submanifold of codimension $k$ if for every $x\in Y$ there exist a neighbourhood $U=\hat{\Pi}(U_0,K) \subset c_{\mathfrak a}X$ of $x$ and functions $h_1,\dots,h_k \in \mathcal O(U)$ such that

(1) $Y \cap U=\{x \in U: h_1(x)=\dots=h_k(x)=0\}$;

(2) The rank of map $z \mapsto \bigl(h_1(z,\omega),\dots,h_k(z,\omega)\bigr)$ is $k$ at each point $(z,\omega) \in Y \cap U$.
\end{definition}

 
We use coherence of the sheaf of ideals of $Y$ together with Theorem \ref{thmB} in \cite{BK9} to obtain results on interpolation within algebra $\mathcal O_{\mathfrak a}(X)$.  In the same paper we extend Cartan theorems A and B to work with coherent-type sheaves on complex submanifolds of $c_{\mathfrak a}X$.


(2) The assumption that subalgebra $\mathfrak a$ is self-adjoint is essential for our proofs of Theorems \ref{thmA} and \ref{thmB}. Nevertheless, using a technique different from that based on Theorem \ref{thmB}, we show in \cite{BK9} that the problem of interpolation within algebra $\mathcal O_{\mathfrak a}(X)$ can be solved (although for a restrictive class of sets) without the latter assumption. 


(3) An interesting example of the algebra of holomorphic $\mathfrak a$-functions with $\mathfrak a$ not self-adjoint is given by Bohr's holomorphic almost periodic functions on a horizontal strip domain $T \subset \mathbb C$ (see~Example \ref{holap}) whose restriction to each fibre $p^{-1}(x) \cong \mathbb Z$ belongs to the subalgebra $AP_+(\mathbb Z)$ of the von Neumann almost periodic functions on $\mathbb Z$ \textit{with positive spectra}, i.e.,
those functions on $\mathbb Z$ that admit uniform approximation by exponential polynomials $\sum_{k=1}^m c_k e^{i\lambda_k  t}$ ($t \in \mathbb Z$) with $\lambda_k \geq 0$ (see~\cite{Brcontr}). One can show that the maximal ideal space of algebra $\mathcal O_{AP_+}(T)$ can be presented as  inverse limit of an inverse limiting system of holomorphic fibre bundles over an annulus whose fibres are biholomorphic to disjoint unions of open polydisks, see~Example \ref{bohrex} below.

\end{remark}

\section{Examples}

\begin{example}[Examples of algebras $\mathfrak a$]
\label{exm}
In addition to algebras $\ell_\infty(G)$, $c(G)$, $AP(\mathbb Z^n)$ (see~Examples \ref{holap} and \ref{firstex}) we mention the following important examples of self-adjoint subalgebras of $\ell_\infty(G)$ invariant with respect to actions of $G$ by right translations.

(1) If a group $G$ is residually finite (respectively, residually nilpotent), i.e., for each element $g \in G$, $g \neq e$, there exists a normal subgroup $G_g \not\ni g$ such that $G/G_g$ is finite (respectively, nilpotent), we consider the closed subalgebra $\hat{\ell}_\infty(G) \subset \ell_{\infty}(G)$ generated by pullbacks to $G$ of algebras $\ell_\infty(G/G_g)$ for all $G_g$ as above.

(2) Recall that a bounded complex function $f$ on a (discrete) group $G$ is called almost periodic if the families of its left and right translates $$\{t \mapsto f(st)\}_{s \in G}, \quad \{t \mapsto f(ts)\}_{s \in G}$$ are relatively compact in $\ell_\infty(G)$ (J.~von Neumann \cite{N}). (It was proved in \cite{Ma}  that
the relative compactness of either the left of the right family of translates already gives almost periodicity.) 
The algebra of almost periodic functions on $G$ is denoted by $AP(G)$.

The basic examples of almost periodic functions on $G$ are matrix elements of finite-dimensional irreducible unitary representations of $G$.

Recall that a topological group $G$ is called \textit{maximally almost periodic} if its finite-dimensional irreducible unitary representations separate points of $G$.
Equivalently, $G$ is maximally almost periodic iff it admits a (continuous) monomorphism into a compact topological group. 

Any residually finite discrete group $G$
belongs to this class. In particular, $\mathbb Z^n$, finite groups, free groups, finitely generated nilpotent groups, pure braid groups, fundamental groups of three dimensional manifolds are maximally almost periodic. 

We denote by $AP_{\trig}(G) \subset AP(G)$ the space of functions
\begin{equation*}
t \mapsto \sum_{k=1}^m c_k \sigma^k_{ij}(t), \quad t\in G,\quad c_k \in \mathbb C, \quad \sigma^k=(\sigma^k_{ij}),
\end{equation*}
where $\sigma^k$, $1 \leq k \leq m$, are finite-dimensional irreducible unitary representations of $G$.
The von~Neumann approximation theorem states that {\em $AP_{\trig}(G)$ is dense in $AP(G)$} \cite{N}. 

In particular, algebra $AP(\mathbb Z^n)$ of almost periodic functions on $\mathbb Z^n$ contains as a dense subset the set of exponential polynomials
$t \mapsto \sum_{k=1}^m c_ke^{i\langle \lambda_k,t \rangle}$, $t \in \mathbb Z^n$, $c_k\in\mathbb C$, $\lambda_k \in \mathbb R^n.$
Here $\langle \cdot,\cdot \rangle$ is the standard inner product on $\mathbb R^n$.

(3) Algebra $AP_{\mathbb Q}(\mathbb Z^n)$ of almost periodic functions on $\mathbb Z^n$ with rational spectra. This is the subalgebra of $AP(\mathbb Z^n)$ generated (over $\mathbb C$) by functions of the form $t \mapsto e^{i\langle \lambda,t \rangle}$ with $\lambda \in \mathbb Q$.

(4) Suppose that the covering $p:X \rightarrow X_0$ is not regular. Still, we can define algebras $\mathcal O_{\ell_\infty}(X)$ and $\mathcal O_{c}(X)$ and include them in the framework of our theory by lifting them to the universal covering of $X_0$.
\end{example}

\begin{example}[Holomorphic almost periodic functions]
\label{holap3}

Elements of algebra \penalty-10000 $\mathcal O_{AP}(X)$, where $X \rightarrow X_0$ is a regular covering of a connected complex manifold $X_0$ with a deck transformation group $G$ and $AP:=AP(G)$, see Example \ref{exm} (2), are called \textit{holomorphic almost periodic functions}.
Equivalently, a function $f \in \mathcal O(X)$ is called holomorphic almost periodic if each orbit $\{g\cdot x\}_{g\in G}\subset X$ has an open neighbourhood $U\subset X$, invariant with respect to the action of $G$ on $X$, such that the family of translates $\{z \mapsto f(g \cdot z), z \in U\}_{g \in G}$ is relatively compact in the topology of uniform convergence on $U$ (see \cite{BK2} for the proof of the equivalence). 

This is a variant of the definition in \cite{W}, where $G$ is taken to be the group of all biholomorphic automorphisms of the complex manifold $X$. An interesting result in \cite{Ves} states that on Siegel domains of the second kind there are no non-constant holomorphic almost periodic functions in the sense of \cite{W} (although on Siegel domains of the first kind, i.e., on tube domains in $\mathbb C^n$, such holomorphic almost periodic functions even separate points). A similar result holds for algebra $\mathcal O_{AP}(X)$; for instance, if $X_0$ is a compact complex manifold, then all holomorphic almost periodic functions on $X$ are constant, see \cite[Theorem~2.3]{BK2}.

\end{example}

\begin{example}[Compactification of deck transformation group $G$]
\label{compex}
(1) Let $\mathfrak a:=c(G)$, ${\rm card}\,G=\infty$ (see~Example \ref{firstex}(2)). Then $\hat{G}_{c}$, $c:=c(G)$, is the one-point compactification of $G$.

(2) Let $\mathfrak a=AP(G)\, (=:AP)$ (see~Example \ref{exm}(2)). Then $\hat{G}_{AP}$ is homeomorphic to a compact topological group $bG$, called the {\em Bohr compactification} of $G$, uniquely determined by the universal property:
there exists a homomorphism $\mu:G \rightarrow bG$ such that for any compact topological group $H$ and any homomorphism $\nu: G\rightarrow H$ there exists a continuous homomorphism $\tilde\nu: bG\to H$ such that the following diagram 
$$
\bfig
\morphism(500,500)/{>}/<0,-500>[G`H;\nu] 
\morphism(500,500)/{>}/<500,0>[G`bG;\mu] 
\morphism(1000,500)|m|/{>}/<-500,-500>[bG`H;\ \, \tilde\nu]
\efig
$$
is commutative.


Applying this property to unitary groups $H:=U_n$, $n \geq 1$, we obtain that group $G$ is maximally almost periodic (see~Example \ref{exm}(2)) if and only if $\mu$ is a monomorphism.

The universal property implies that there exists a bijection between sets of finite-dimensional irreducible unitary representations of $G$ and $bG$. It turn, the Peter-Weyl theorem for $C(bG)$ and the von Neumann approximation theorem for $AP(G)$ (see~Example \ref{exm}(2)) imply that $AP(G) \cong C(bG)$. 
Therefore, $bG$ is homeomorphic to the maximal ideal space $M_{AP(G)}$ of algebra $AP(G)$ and $\mu(G)$ is dense in $bG$.
Under this homeomorphism, the set $j(G)$ (see Section \ref{mainsect} for its definition) is identified with the subgroup $\mu(G) \subset bG$. In case $G$ is maximally almost periodic, we will identify $G$ with $\mu(G)\subset bG$ by means of $\mu$ so that the action of $G$ on $\hat{G}_{AP}:=M_{AP(G)}$ coincides with that of $G$ on $bG$ by right translations.

By the Peter-Weil theorem the group $bG$ can be presented as inverse limit of an inverse system of finite-dimensional compact Lie groups. 
In particular, the Bohr compactification $b\mathbb Z$ of the group of integers $\mathbb Z$ is inverse limit of an inverse system of compact abelian Lie groups
$\mathbb T^k \times \oplus_{l=1}^m \mathbb Z/(n_l\mathbb Z)$, $k,m,n_l\in\mathbb N$, where
$\mathbb T^k:=(\mathbb S^1)^k$ is the real $k$-torus. It follows that $b\mathbb Z$ is disconnected and has infinite covering dimension.
The limit homomorphisms $b\mathbb Z \rightarrow \mathbb T^k \times \oplus_{l=1}^m \mathbb Z/(n_l\mathbb Z)$ are defined by finite families of characters $\mathbb Z\rightarrow\mathbb S^1$. 
For instance, let $\lambda_1$, $\lambda_2 \in \mathbb R \setminus \mathbb Q$ be linearly independent over $\mathbb Q$ and $\chi_{\lambda_i}:\mathbb Z\rightarrow\mathbb S^1$, $\chi_{\lambda_i}(n):=e^{2\pi i\lambda_i n}$, $i=1,2$, be the corresponding characters. Then the map $(\chi_{\lambda_{1}},\chi_{\lambda_2}):\mathbb Z\rightarrow\mathbb T^2$ is extended by continuity to a continuous surjective homomorphism $b\mathbb Z\rightarrow\mathbb T^2$. 
If $\lambda_1$, $\lambda_2$ are linearly dependent over $\mathbb Q$, then the corresponding extended homomorphism has image in $\mathbb T^2$ isomorphic to $\mathbb S^1 \times \mathbb Z/(m\mathbb Z)$ for some $m\in\mathbb N$.

(3) Let $\mathfrak a=AP_{\mathbb Q}(\mathbb Z^n)$ (see~Example \ref{exm}(3)).
Then $\hat{G}_{AP_{\mathbb Q}(\mathbb Z^n)}$ is homeomorphic to the profinite completion of group $\mathbb Z^n$; it is defined as inverse limit of an inverse system of groups
$\oplus_{l=1}^m \mathbb Z/(n_l\mathbb Z)$, $m,n_l\in\mathbb N$. 
It follows that covering dimension of $\hat{G}_{AP_{\mathbb Q}(\mathbb Z^n)}$ is zero.

(4) Let $\mathfrak a=\ell_\infty(G)\, (=:\ell_\infty)$ (see~Example \ref{firstex}(1)). Then $\hat{G}_{\ell_\infty} \cong \beta G$, the Stone--\v{C}ech compactification of group $G$. 
Covering dimension of $\hat{G}_{\ell_\infty}$ is zero (see, e.g., \cite[Thm.~9-5]{Nag}).
\end{example}

\section{Structure of fibrewise compactification $c_{\mathfrak a}X$}
\label{moreoncomp}

\subsection{Set structure of $c_{\mathfrak a}X$}
\label{structsection}

As a set, $c_{\mathfrak a}X$ is the disjoint union of connected complex manifolds, each is a covering of $X_0$.
Indeed, let $\Upsilon:=\hat{G}_{\mathfrak a}/G$ be the set of orbits of elements of $\hat{G}_{\mathfrak a}$ by the (right) action of $G$;
since any orbit $H \in \Upsilon$ is invariant with respect to the action of $G$, we may consider the associated to the action of $G$ on $H$ fibre bundle 
$p_H:X_H \rightarrow X_0$ with fibre $H$.
We assume that $H$ is endowed with discrete topology.
Then $p_H:X_H \rightarrow X_0$ is an unbranched covering of $X_0$ (in general non-regular).
Since $X$ is connected and $X$ is a covering of $X_H$, the complex manifold $X_H$ is connected as well. For each $H \in \Upsilon$ we have the natural continuous injective map $$\iota_H:X_H \hookrightarrow c_{\mathfrak a}X$$ determined by (equivariant with respect to the action of $G$) inclusion $H \hookrightarrow \hat{G}_{\mathfrak a}$. We denote $\hat{X}_H:=\iota_H(X_H)$.
In view of (\ref{jgact}), we have $j(G) \in \Upsilon$. Hence, if $j$ is injective, then $X=X_{j(G)}$ and $\iota=\iota_{j(G)}$ (see~(\ref{iotamap})).

It follows that as a set $$c_{\mathfrak a}X=\bigsqcup_{H \in \Upsilon}\iota_H(X_H).$$

\begin{example}
\label{c0ex}
$\hat{G}_{c}:=G\cup\{\infty\}$, ${\rm card}\, G=\infty$, is the one-point compactification of $G$,
and the action of $G$ on $\hat{G}_{c}$ fixes point $\infty$, so $\Upsilon=\{\{G\}, \{\infty\}\}$. It follows that the as a set $c_{c}X$ is the disjoint union of $X$ and $X_0$.
\end{example}

\begin{example}
\label{bohrex}
Let $\mathfrak a=AP(G)$ and $G$ be maximally almost periodic.
In what follows, we assume that $\hat{G}_{AP}$ is endowed with the group structure of the Bohr compactification $bG$, see~Example \ref{compex}(2).

Since $G$ is  a subgroup of $bG$, every orbit $H \in \Upsilon$ is a right coset of $G$ in $bG$,
 $X_H=X$ for all $H \in \Upsilon$, and each set $\hat{X}_H$ is dense in $c_{AP}X$.

The fibre bundle $c_{AP}X$ can be presented as inverse limit
of an inverse system of smooth fibre bundles on $X_0$.
Indeed, by the Peter-Weil theorem, $bG$ can be presented as inverse limit of an inverse system of finite-dimensional compact Lie groups $\{G_s\}_{s \in S}$ (see~Example \ref{compex}(2)). By $\pi_s: bG \rightarrow G_s$ we denote the corresponding limit homomorphisms. The right action of $G$ on $bG$ determines an action $r_s$ of $G$ on $G_s$, $r_s(g)(h):=h\cdot\pi_s(g)$, $g\in G$, $h\in G_s$. Let $p_s: X_s\rightarrow X_0$ be the associated to $r_s$ fibre bundle on $X_0$ with fibre $G_s$. Then $X_s$ has a smooth manifold structure and inverse limit along $\{G_s\}_{s \in S}$ determines inverse limit along the corresponding inverse system $\{X_s\}_{s\in S}$ which is homeomorphic to $c_{AP}X$.
\end{example}

\begin{example}
\label{ex3}
Let $\mathfrak a=AP_{\mathbb Q}(\mathbb Z^n)$.
Since covering dimension of $\hat{G}_{AP_{\mathbb Q}(\mathbb Z^n)}$ is zero (see~Example \ref{compex}(3)), covering dimension of $c_{AP_{\mathbb Q}(\mathbb Z^n)}X$ is equal to $\dim_{\mathbb R}X_0$.
\end{example}

\begin{example}
\label{ex1}
Let $\mathfrak a=\ell_\infty(G)$. Then $\hat{G}_{\ell_\infty} \cong \beta G$, the Stone--\v{C}ech compactification of group $G$. 
Since covering dimension of $\hat{G}_{\ell_\infty}$ is zero, covering dimension of $c_{\ell_\infty}X$ coincides with real dimension of $X_0$. 

It is easy to see that $c_{\ell_\infty}X$ is the maximal fibrewise compactification of covering $X\rightarrow X_0$ in the sense that if $\mathfrak a\subset\ell_\infty(G)$ is a (closed) subalgebra, then 
there exists a surjective bundle morphism $c_{\ell_\infty}X \rightarrow c_{\mathfrak a}X$. Indeed, let $\kappa:\hat{G}_{\ell_\infty} \rightarrow \hat{G}_{\mathfrak a}$ be a continuous surjective map adjoint to the inclusion $\mathfrak a \hookrightarrow \ell_\infty(G)$; since $\kappa$ is equivariant with respect to the corresponding (right) actions of $G$, it determines the required surjective bundle morphism.

\end{example}

\subsection{Complex structure on $c_{\mathfrak a}X$}
\label{charts}

A function $f \in C(U)$ on an open subset $U \subset c_{\mathfrak a}X$ is called {\em holomorphic} if 
$\iota^* f$ is holomorphic on $\iota^{-1}(U) \subset X$ in the usual sense. 

Let $U_0 \subset X_0$ be open. 
A function $f \in C(U)$ on an open subset $U\subset U_0 \times \hat{G}_{\mathfrak a}$ is called {\em holomorphic} if  $\tilde j^*f$, $\tilde j:={\rm Id}\times j :U_0\times G\rightarrow  U_0 \times \hat{G}_{\mathfrak a}$, is holomorphic on the open subset $\tilde j^{-1}(U)$ of the complex manifold $U_0\times G$ (see Section \ref{mainsect} for the definition of the map $j$).

For sets $U$ as above, let $\mathcal O(U)$ denote the algebra of holomorphic functions on $U$ endowed with the topology of uniform convergence on compact subsets of $U$.
Clearly, $f \in C(c_{\mathfrak a}X)$ belongs to $\mathcal O(c_{\mathfrak a}X)$ if and only if each point in $c_{\mathfrak a}X$ has an open neighbourhood $U$ such that $f|_{U} \in \mathcal O(U)$, see Definition \ref{def2.2}. 

By $\mathcal O_{U}$ we denote the sheaf of germs of holomorphic functions on $U$.

The category $\mathcal M$ of ringed spaces of the form $(U,\mathcal O_{U})$, where $U$ is either an open subset of $c_{\mathfrak a}X$ and $X$ is a regular covering of a complex manifold $X_0$ or is an open subset of $U_0 \times \hat{G}_{\mathfrak a}$ with $U_0\subset X_0$ open, contains in particular complex manifolds.

\begin{defin}
\label{defholmap}
A morphism of two objects in $\mathcal M$, that is, a map $F \in C(U_1,U_2)$, where $(U_i,\mathcal O_{U_i})\in\mathcal M$, $i=1,2$, such that $F^*\mathcal O_{U_2} \subset \mathcal O_{U_1}$, is called a {\em holomorphic map}.  
\end{defin}

The collection of holomorphic maps $F: U_1 \rightarrow U_2$, $(U_i,\mathcal O_{U_i})\in\mathcal M$, $i=1,2$, is denoted by $\mathcal O(U_1,U_2)$. If $F\in \mathcal O(U_1,U_2)$ has inverse $F^{-1}\in\mathcal O(U_2,U_1)$, then $F$ is called a {\em biholomorphism}.

The next result shows that, in a sense, the holomorphic structure on $c_{\mathfrak a}X$ is concentrated in 'horizontal layers' $\hat{X}_H \subset c_{\mathfrak a}X$ {\rm (}$H \in \Upsilon${\rm)}.


\begin{theorem}
\label{holmapthm}
For a connected complex manifold $M$ and a map  $F \in \mathcal O(M,c_{\mathfrak a}X)$ there exists $H \in \Upsilon$ such that $F(M) \subset \hat{X}_H$.
\end{theorem}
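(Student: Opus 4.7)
My approach is to show that the map $x \mapsto H(x) \in \Upsilon$ assigning to $x \in M$ the $G$-orbit of $F(x)$ in $c_{\mathfrak a}X$ is locally constant. Once this is known, connectedness of $M$ forces $H(\cdot) \equiv H_0$ for some $H_0 \in \Upsilon$, and therefore $F(M) \subset \hat X_{H_0}$. Thus the whole question reduces to a local statement at an arbitrary fixed point $x_0 \in M$.

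To set up the local picture I pick a connected open neighborhood $V$ of $x_0$ small enough that $F(V)$ lies in a single trivializing chart $\hat\Pi(U_0, \hat G_{\mathfrak a}) \cong U_0 \times \hat G_{\mathfrak a}$ with $U_0 \subset X_0$ open and simply connected, and write $F|_V = (F_1, F_2)$, where $F_1 = \bar p \circ F|_V$ is holomorphic (by functoriality of Definition~\ref{defholmap} applied to $\bar p$) and $F_2 : V \to \hat G_{\mathfrak a}$ is continuous. In this chart $H(x)$ is simply the $G$-orbit of $F_2(x)$, and by the cocycle relation (\ref{extrem2}) this orbit does not depend on the chart. Hence it is enough to show that $F_2$ is constant on $V$. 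The key observation is that for every $f \in \mathfrak a$ the function $\phi_f(z,\omega) := \hat f(\omega)$ on $U_0 \times \hat G_{\mathfrak a}$ is holomorphic in the sense of Section~\ref{charts}: its pullback by $\tilde j = \Id \times j$ is $(z,g) \mapsto f(g)$, independent of $z$ and so trivially holomorphic on each sheet of the complex manifold $U_0 \times G$. Holomorphy of $F$ then gives that $\hat f \circ F_2 = F^*\phi_f$ is holomorphic on $V$ for every $f \in \mathfrak a$.

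The decisive step is deducing that $F_2$ must be constant. When $\mathfrak a$ is self-adjoint, every character of $\mathfrak a$ is a $\ast$-homomorphism, so $\hat{\bar f} = \overline{\hat f}$ on $\hat G_{\mathfrak a}$, and the self-adjoint elements $(f+\bar f)/2,\, (f-\bar f)/(2i) \in \mathfrak a$ have Gelfand transforms equal respectively to $\Re \hat f$ and $\Im \hat f$ on $\hat G_{\mathfrak a}$. Applying the preceding paragraph to these elements shows that both $\Re(\hat f \circ F_2)$ and $\Im(\hat f \circ F_2)$ are holomorphic and real-valued on the connected complex manifold $V$, hence locally constant and so constant. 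Thus $\hat f \circ F_2$ is constant on $V$ for every $f \in \mathfrak a$. Since $\hat{\mathfrak a}$ separates the points of $\hat G_{\mathfrak a}$ (the Gelfand transform being injective on the semisimple uniform algebra $\mathfrak a$), $F_2$ must itself be constant on $V$, equal to some $\omega_0 \in \hat G_{\mathfrak a}$. Then $F(V) \subset U_0 \times \{\omega_0\} \subset \hat X_{H_0}$ where $H_0$ is the $G$-orbit of $\omega_0$, giving local constancy of $H(\cdot)$ at $x_0$ as required.

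The main obstacle is the non-self-adjoint case, since the ``conjugate trick'' above is then unavailable. I would attempt two complementary approaches. First, pass to the self-adjoint closure $\tilde{\mathfrak a}$ of $\mathfrak a$, which comes with a continuous $G$-equivariant surjection $\hat G_{\tilde{\mathfrak a}} \to \hat G_{\mathfrak a}$ and a corresponding bundle morphism $c_{\tilde{\mathfrak a}}X \to c_{\mathfrak a}X$; the theorem for $\tilde{\mathfrak a}$, together with the fact that this morphism sends orbits to orbits, would close the case provided one can lift $F$ to a holomorphic map into $c_{\tilde{\mathfrak a}}X$, which is the delicate point. Second, try to prove a priori that $F_2(V)$ is contained in a single $G$-orbit $H \subset \hat G_{\mathfrak a}$; since $G$ is countable, $H$ with its subspace topology is a countable Hausdorff space, hence totally disconnected, and so any continuous image of the connected $V$ lying in $H$ would collapse to a point and force $F_2$ to be constant by a different route.
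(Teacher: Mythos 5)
Your overall strategy is exactly the paper's: reduce to local constancy of the fibre component $F_2$ in a trivializing chart, observe that functions of the fibre variable alone are holomorphic on $U_0\times\hat G_{\mathfrak a}$ because their pullback under $\tilde j=\Id\times j$ is independent of $z$, and then use the real-and-imaginary-part trick together with point separation on $\hat G_{\mathfrak a}$. In the self-adjoint case your argument is complete and correct. The genuine gap is the non-self-adjoint case: Theorem \ref{holmapthm} carries no self-adjointness hypothesis, and neither of your two fallback routes closes it. The first (passing to the self-adjoint closure $\tilde{\mathfrak a}$) leaves unproved precisely the step you flag as delicate, namely lifting $F$ to a holomorphic map into $c_{\tilde{\mathfrak a}}X$. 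The second begs the question: showing that $F_2(V)$ lands in a single $G$-orbit \emph{is} the local form of the theorem, so it cannot be used as an a priori input.

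The fix is already contained in your own justification of holomorphy of $\phi_f$, and it is what the paper does: nothing in that justification uses that $\hat f$ is the Gelfand transform of an element of $\mathfrak a$. For an \emph{arbitrary} $h\in C(K)$ (with $K\subset\hat G_{\mathfrak a}$ open) the function $(z,\omega)\mapsto h(\omega)$ pulls back under $\tilde j$ to $(z,g)\mapsto h(j(g))$, which is constant on each sheet $U_0\times\{g\}$ of the complex manifold $U_0\times G$ and hence holomorphic; so $h\circ F_2$ is holomorphic for every continuous $h$, in particular for real-valued $h$, and also $\bar h\circ F_2=\overline{h\circ F_2}$ is holomorphic. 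This forces $h\circ F_2$ to be locally constant with no self-adjointness assumption, and since $\hat G_{\mathfrak a}$ is compact Hausdorff, real-valued Urysohn functions separate $\eta_0$ from any other point (the paper uses a family of such functions with $\cap_\lambda L_\lambda=\{\eta_0\}$), giving $F_2\equiv\eta_0$ on the connected component and hence $F(W)\subset U_0\times\{\eta_0\}\subset\hat X_{H_0}$. With this one modification your proof coincides with the paper's and covers the general case.
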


If covering dimension of $\hat{G}_{\mathfrak a}$ is zero (see~Examples \ref{ex3} and \ref{ex1}), then the assertion of Theorem \ref{holmapthm} holds true even for continuous maps, i.e., for any $F \in C(M,c_{\mathfrak a}X)$ there exists $H \in \Upsilon$ such that $F(M) \subset \hat{X}_H$ (see the argument in the proof of Theorem 1.2(d) in \cite{Br8}).

\medskip


Further, over each simply connected open subset $U_0 \subset X_0$ there exists a biholomorphic trivialization
$\psi=\psi_{U_0}:p^{-1}(U_0) \rightarrow U_0 \times G$ of covering $p:X \rightarrow X_0$ which is a morphism of fibre bundles with fibres $G$. 
Then there exists a biholomorphic trivialization $\bar{\psi}=\bar{\psi}_{U_0}:\bar{p}^{-1}(U_0) \rightarrow U_0 \times \hat{G}_{\mathfrak a}$
of bundle $c_{\mathfrak a}X$ over $U_0$, which is a morphism of fibre bundles with fibre $\hat{G}_{\mathfrak a}$, such that 
the following diagram 
\begin{equation*}
\bfig
\node a1(0,0)[p^{-1}(U_0)]
\node a2(0,-500)[U_0 \times G]
\node b1(700,0)[\bar{p}^{-1}(U_0)]
\node b2(700,-500)[U_0 \times \hat{G}_{\mathfrak a}]
\arrow[a1`a2;\psi]
\arrow[a1`b1;\iota]
\arrow[b1`b2;\bar{\psi}]
\arrow[a2`b2;\Id \times j]
\efig
\end{equation*}
is commutative.

For a given subset $S \subset G$ we denote
\begin{equation}
\label{pi1}
\Pi(U_0,S):=\psi^{-1}(U_0 \times S)
\end{equation}
and identify $\Pi(U_0,S)$ with $U_0 \times S$ where appropriate (here $\Pi(U_0,G)=p^{-1}(U_0)$).

For a subset $K \subset \hat{G}_{\mathfrak a}$ we denote
\begin{equation}
\label{pi2}
\hat{\Pi}(U_0,K)~\bigl(=\hat{\Pi}_{\mathfrak a}(U_0,K)\bigr):=\bar{\psi}^{-1}(U_0 \times K).
\end{equation}
A pair of the form $(\hat{\Pi}(U_0,K),\bar{\psi})$ will be called a \textit{coordinate chart} for $c_{\mathfrak a}X$. Similarly, sometimes we identify $\hat{\Pi}(U_0,K)$ with $U_0 \times K$.
If $K \subset \hat{G}_{\mathfrak a}$ is open, then, by our definitions, $\bar{\psi}^*: \mathcal O(U_0 \times K)\rightarrow\mathcal O(\hat{\Pi}(U_0,K))$ is an isomorphism of (topological) algebras.

\subsection{Basis of topology on $c_{\mathfrak a}X$}
\label{topsect}

We denote by $\mathfrak Q$ the basis of topology of $\hat{G}_{\mathfrak a}$ consisting of sets of the form
\begin{equation}
\label{base1}
\left\{\eta \in \hat{G}_{\mathfrak a}: \max_{1 \leq i \leq m}|h_i(\eta)-h_i(\eta_0)|<\varepsilon\right\}
\end{equation}
for $\eta_0 \in \hat{G}_{\mathfrak a}$, $h_1,\dots,h_m \in C(\hat{G}_{\mathfrak a})$, and $\varepsilon>0$.

The fibrewise compactification $c_{\mathfrak a}X$ is a paracompact Hausdorff space (as a fibre bundle with a paracompact base and a compact fibre); thus, $c_{\mathfrak a}X$ is a normal space.

It is easy to see that the family
\begin{equation}
\label{base2}
\mathfrak B:=\{\hat{\Pi}(V_0,L) \subset c_{\mathfrak a}X: V_0 \text{ is open simply connected in } X_0 \text{ and } L \in \mathfrak Q\}.
\end{equation}
forms a basis of topology of $c_{\mathfrak a}X$.

\subsection{Coherent sheaves on $c_{\mathfrak a}X$ }
\label{cohsect}

A sheaf of modules on an open subset $U \subset c_{\mathfrak a}X$ over $\mathcal O|_U$ will be called an \textit{analytic sheaf}. A homomorphism between analytic sheaves will be called an \textit{analytic homomorphism}.

Recall that a coherent sheaf $\mathcal A$ on $c_{\mathfrak a}X$ is an analytic sheaf such that every point in $c_{\mathfrak a}X$ has an open neighbourhood $U$ over which, for any $N \geq 1$, there is an exact sequence of sheaves of modules of the form
\begin{equation}\label{eq4.5}
\mathcal O^{m_{N}}|_U \overset{\varphi_{N-1}}{\to} \dots \overset{\varphi_2}{\to} \mathcal O^{m_{2}}|_U \overset{\varphi_1}{\to} \mathcal O^{m_{1}}|_U \overset{\varphi_0}{\to} \mathcal A|_U \to 0,
\end{equation}
where $\varphi_i$, $0 \leq i \leq N-1$, are analytic homomorphisms.

An analytic sheaf $\mathcal A$ on $c_{\mathfrak a}X$ is called a \textit{Fr\'{e}chet sheaf} if for each open set $U \in \mathfrak B$ the module of sections $\Gamma(U,\mathcal A)$ of $\mathcal A$ over $U$
is endowed with topology of a Fr\'{e}chet space.

\begin{proposition}
\label{frechetprop}
Every coherent sheaf can be turned in a unique way into a Fr\'{e}chet sheaf so that the following conditions are satisfied:

(1) If $\mathcal A$ is a coherent subsheaf of $\mathcal O$, then for any open subset $U \in \mathfrak B$ the module of sections $\Gamma(U,\mathcal A)$ has topology of uniform convergence on compact subsets of $U$.

(2) If $\mathcal A,\mathcal B$ are coherent sheaves on $c_{\mathfrak a}X$, then for any $U \in \mathfrak B$ the spaces $\Gamma(U,\mathcal A)$, $\Gamma(U,\mathcal B)$ are Fr\'{e}chet spaces, and any analytic homomorphism $\varphi:\mathcal A \rightarrow \mathcal B$ is continuous in the sense that the homomorphisms of sections of $\mathcal A$ and $\mathcal B$ over sets $U \in \mathfrak B$ induced by $\varphi$ are continuous.

The topology on $\Gamma(U,\mathcal A)$ can be defined by a family of semi-norms
\begin{equation*}
\|f\|_{V_k}:=\inf_h\left\{\sup_{x \in V_k}|h(x)|: h \in \Gamma(V_k,\mathcal O^{m_{1}}),~f=\bar{\varphi}_0(h)\right\},
\end{equation*}
where $\bar{\varphi}_0$ is the homomorphism of sections induced by $\varphi_0$ in \eqref{eq4.5}, and open sets $V_k \in \mathfrak B$ are such that $V_{k} \Subset V_{k+1} \Subset U$ for all $k$, and $U=\cup_k V_k$ (see Lemma \ref{exhlem}(2) below for existence of such exhaustion of $U$).

\end{proposition}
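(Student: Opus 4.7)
The plan is to adapt the classical proof that coherent analytic sheaves on Stein manifolds admit a canonical Fr\'{e}chet structure (see, e.g., Grauert--Remmert) by substituting Theorems \ref{thmA} and \ref{thmB} for the classical Cartan theorems.

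First I would establish that, for every $U \in \mathfrak B$ of the form $\hat{\Pi}(U_0,L)$, the space $\Gamma(U,\mathcal O^m)$ carries a natural Fr\'{e}chet topology of uniform convergence on compact subsets. This follows from the identification $\bar\psi^*:\mathcal O(U_0\times L)\to\mathcal O(U)$ combined with paracompactness of $c_{\mathfrak a}X$, so that $U$ is $\sigma$-compact and its section algebra is a closed subspace of the metrizable space $C(U)$. Next, fix an exhaustion $V_1\Subset V_2\Subset\cdots$ of $U$ by sets in $\mathfrak B$ (Lemma \ref{exhlem}(2)), and choose a free resolution \eqref{eq4.5} of length at least $2$ over a neighbourhood of $\overline{U}$. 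Setting $\mathcal K:=\ker\varphi_0=\mathrm{im}(\varphi_1)$, which is itself coherent by the arbitrary-length clause in the definition of coherence, Theorem \ref{thmB} gives $H^1(V_k,\mathcal K)=0$, so $\bar\varphi_0:\Gamma(V_k,\mathcal O^{m_1})\to\Gamma(V_k,\mathcal A)$ is surjective and the seminorm $\|\cdot\|_{V_k}$ is well defined; algebraically $\Gamma(V_k,\mathcal A)\cong\Gamma(V_k,\mathcal O^{m_1})/\Gamma(V_k,\mathcal K)$ and $\|\cdot\|_{V_k}$ is the associated quotient seminorm.

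The central technical step is to show that $\Gamma(V_k,\mathcal K)$ is closed in the Fr\'{e}chet space $\Gamma(V_k,\mathcal O^{m_1})$, so that the quotient is Hausdorff. By a further application of Theorem \ref{thmB} to $\ker\varphi_1$ (also coherent), $\Gamma(V_k,\mathcal K)$ equals the image of $\bar\varphi_1:\Gamma(V_k,\mathcal O^{m_2})\to\Gamma(V_k,\mathcal O^{m_1})$; its closedness is the analog in our setting of the standard fact that homomorphisms between finitely generated free analytic sheaves induce closed-range section maps, and follows by combining the Cartan B surjectivity above with the Banach/Fr\'{e}chet-valued coherent sheaf machinery of Bishop--Bungart--Leiterer--Lempert invoked elsewhere in the paper. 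With closedness in hand, $\Gamma(V_k,\mathcal A)$ is a Fr\'{e}chet quotient, and $\Gamma(U,\mathcal A)$, endowed with the projective-limit topology along the restrictions to the $V_k$, is Fr\'{e}chet as well.

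Finally, I would verify independence of the resolution and condition (2) by standard lifting arguments: given two free resolutions of $\mathcal A$ (respectively, a homomorphism $\varphi:\mathcal A\to\mathcal B$), Theorem \ref{thmA} permits lifting the identity (respectively $\varphi$) to morphisms of the leading free sheaves, which are represented by matrices of holomorphic functions and hence are manifestly continuous on sections; the open mapping theorem for Fr\'{e}chet spaces then promotes these to continuous isomorphisms, yielding both independence of resolution and continuity of $\varphi$. Uniqueness of the Fr\'{e}chet structure satisfying (1) and (2) is another open-mapping argument: any such topology forces $\bar\varphi_0$ to be a continuous surjection between Fr\'{e}chet spaces, hence open, and thus induces precisely the quotient topology defined by the seminorms above. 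The principal obstacle, as indicated, is the closed-range assertion in the third paragraph: because $c_{\mathfrak a}X$ is not a complex manifold (the fibre $\hat{G}_{\mathfrak a}$ typically has infinite covering dimension), the classical finite-dimensional arguments do not apply directly, and this is exactly where the infinite-dimensional methods cited in the introduction play a decisive role.
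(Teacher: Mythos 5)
Your outline has the right shape (quotient seminorms from a free resolution, closedness of the kernel, open-mapping arguments for independence and continuity), but it contains two genuine problems. First, a circularity: you propose to ``substitute Theorems \ref{thmA} and \ref{thmB} for the classical Cartan theorems,'' invoking Theorem \ref{thmB} to get surjectivity of $\bar{\varphi}_0$ over $V_k$ and Theorem \ref{thmA} to lift the identity map between two resolutions. In this paper the logical order is the reverse: the proofs of Theorems \ref{thmA} and \ref{thmB} both rely on Proposition \ref{frechetprop} (Theorem \ref{thmB} needs it to see that $\bar{p}_*\mathcal A$ is a Banach coherent analytic Fr\'{e}chet sheaf in Leiterer's sense, and Theorem \ref{thmA} uses its seminorms). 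Moreover Theorem \ref{thmB} is a statement about $c_{\mathfrak a}X$, not about the local sets $V_k$. The tools that are actually available at this stage are the local results: Proposition \ref{vanprop}(1) (complete exactness of the truncated section sequence over product-type charts $U_0\times K$), Lemma \ref{matrixlem}, and the lifting Lemmas \ref{lem65} and \ref{lem5}, which rest on complete exactness rather than on Cartan A.

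Second, and more seriously, the step you yourself identify as central --- closedness of $\Gamma(V_k,\mathcal K)$ in $\Gamma(V_k,\mathcal O^{m_1})$ --- is asserted rather than proved: you recast it as a closed-range property of $\bar{\varphi}_1$ and say it ``follows by combining the Cartan B surjectivity with the Bishop--Bungart--Leiterer--Lempert machinery,'' which is not an argument; in the classical theory this closedness rests on the Closure Lemma (Weierstrass division, Noetherianness of local rings), precisely the ingredients that fail here. The paper avoids closed-range considerations entirely by a bootstrapping trick: first treat a coherent \emph{subsheaf} $\mathcal A\subset\mathcal O^{k}$, where the composite $\mathcal O^{m_1}|_U\to\mathcal A\subset\mathcal O^{k}|_U$ is given by a holomorphic matrix (Lemma \ref{matrixlem}), so $\Gamma(U,\Ker\varphi_0)$ is the kernel of a \emph{continuous} map between free section spaces and is therefore closed, making $\Gamma(U,\mathcal A)$ complete; then, for a general coherent $\mathcal A$, use the Three Lemma (Lemma \ref{lem7}) to see that $\Ker\varphi_0$ is itself a coherent subsheaf of the free sheaf $\mathcal O^{m_1}|_U$, hence by the first case its section space is complete and thus closed, and the quotient topology on $\Gamma(U,\mathcal A)$ is Fr\'{e}chet. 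Without this (or an equivalent) substitute for the classical closure argument, your proof does not go through.
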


The proof essentially repeats that of an analogous result for coherent analytic sheaves on complex manifolds, see, e.g., \cite{GR}. For the sake of completeness, we provide the proof of the proposition in the Appendix.

%

\begin{theorem}[Runge-type approximation]
\label{cechtriv} Let $X_0$ be a Stein manifold, $\mathcal A$ a coherent sheaf on $c_{\mathfrak a}X$.
%
%
%
%
%
%
Suppose that $Y_0 \Subset X_0$, $\hat{Y} \subset c_{\mathfrak a}X$ are open and such that either
 (1) $Y_0$ is holomorphically convex in $X_0$ and $\hat{Y}=\bar{p}^{-1}(Y_0)$, or 
 (2) $Y_0$ is holomorphically convex in $X_0$ and is contained in a simply connected open subset of $X_0$, and $\hat{Y}=\hat{\Pi}(Y_0,K)$ for some $K \in \mathfrak Q$ (see subsection \ref{base1}). 
 
 Then the image of the restriction map $\Gamma(c_{\mathfrak a}X,\mathcal A) \rightarrow \Gamma(\hat{Y},\mathcal A)$ is dense (in the topology of Proposition \ref{frechetprop}).
\end{theorem}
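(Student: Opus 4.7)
I would follow a classical Behnke--Stein / Mittag-Leffler exhaustion, reducing case (2) to case (1) via a fiber cutoff. For case (1), fix a Stein exhaustion $X_{0,1}\Subset X_{0,2}\Subset\cdots$ of $X_0$ by $\mathcal O(X_0)$-convex open subsets with $Y_0 \Subset X_{0,1}$ (each $X_{0,k}$ is then a Stein manifold, holomorphically convex in $X_{0,k+1}$), and set $\hat X_k:=\bar p^{-1}(X_{0,k})$, so that $\hat Y \Subset \hat X_1 \Subset \hat X_2 \Subset \cdots \nearrow c_{\mathfrak a}X$. Given $f\in \Gamma(\hat Y,\mathcal A)$, a semi-norm $\|\cdot\|_V$ with $V \Subset \hat Y$, and $\varepsilon>0$, I would inductively build sections $f_k \in \Gamma(\hat X_k, \mathcal A)$ starting from an approximation $f_1$ of $f$ on $V$ and enforcing $\|f_{k+1}-f_k\|_V < \varepsilon/2^{k+1}$; the Fr\'echet completeness of $\Gamma(c_{\mathfrak a}X,\mathcal A)$ (Proposition \ref{frechetprop}) and continuity of restrictions then yield a global section $\tilde f = \lim_k f_k$ with $\|\tilde f - f\|_V < \varepsilon$.

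The engine is a one-step lemma: given $g \in \Gamma(\hat X_k, \mathcal A)$, approximate it on $V$ by a section in $\Gamma(\hat X_{k+1}, \mathcal A)$. I would reduce this to a scalar problem via Theorems \ref{thmA} and \ref{thmB}. By Theorem \ref{thmA} the stalks $\phantom{}_{x}\mathcal A$ for $x\in \overline{\hat X_{k+1}}$ are generated by global sections; compactness produces finitely many $s_1,\dots,s_m \in \Gamma(c_{\mathfrak a}X,\mathcal A)$ generating $\mathcal A$ on an open neighbourhood $W \supset \overline{\hat X_{k+1}}$, giving a surjection $\varphi:\mathcal O^m|_W \to \mathcal A|_W$ with coherent kernel $\mathcal R$. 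Since $\hat X_k$ is itself the fibrewise compactification of a regular cover of the Stein manifold $X_{0,k}$, Theorem \ref{thmB} applied there yields $H^1(\hat X_k, \mathcal R) = 0$, so that $g$ lifts to $\tilde g \in \Gamma(\hat X_k, \mathcal O^m)$. A scalar approximation $\tilde g' \in \Gamma(\hat X_{k+1}, \mathcal O^m)$ close to $\tilde g$ on $V$ then produces $g' := \varphi(\tilde g') \in \Gamma(\hat X_{k+1}, \mathcal A)$ close to $g$ on $V$, using continuity of $\varphi$ (Proposition \ref{frechetprop}).

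The one-step scalar Runge, i.e.\ density of $\Gamma(\hat X_{k+1}, \mathcal O^m) \to \Gamma(\hat X_k, \mathcal O^m)$, is the analytic heart. Under the identification $\mathcal O(c_{\mathfrak a}X) \cong \mathcal O_{\mathfrak a}(X)$ (Proposition \ref{basicpropthm}), holomorphic $\mathfrak a$-functions correspond to holomorphic sections of the associated Fr\'echet vector bundle $X \times_G \mathfrak a \to X_0$, so the assertion becomes a Runge-type density for Fr\'echet-valued holomorphic sections on the Runge pair $(X_{0,k}, X_{0,k+1})$ of the Stein manifold $X_{0,k+1}$, supplied by the Bungart--Leiterer theory \cite{Bu,Bu2,Lt} of coherent-type sheaves of Fr\'echet modules. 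For case (2), given $V\Subset \hat Y = \hat\Pi(Y_0, K)$, pick $L\in \mathfrak Q$ with $V\subset \hat\Pi(Y_0, L)$ and $\bar L \subset K$, and (Urysohn applied to the compact Hausdorff space $\hat G_{\mathfrak a}$) choose $\chi \in C(\hat G_{\mathfrak a}) \cong \mathfrak a$ with $\chi \equiv 1$ on $\bar L$ and $\chi \equiv 0$ on $\hat G_{\mathfrak a}\setminus K$. Pulled back via $\bar\psi_{U_0}$, $\chi$ is a holomorphic function on $\bar p^{-1}(U_0)$ (its $\tilde j$-pullback $(z,g)\mapsto \chi(j(g))$ is constant in $z$, hence holomorphic), so $\chi\cdot f$ extends by zero to a section $F \in \Gamma(\bar p^{-1}(Y_0), \mathcal A)$ with $F|_V = f|_V$; approximating $F$ by case (1) then approximates $f$ on $V$.

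The principal obstacle is the scalar Runge step: since the fiber $\hat G_{\mathfrak a}$ is typically infinite-dimensional (e.g.\ $\hat G_{\mathfrak a} = b\mathbb Z$ when $\mathfrak a=AP$, cf.\ Example \ref{compex}(2)), classical finite-dimensional Oka--Weil is unavailable and one must genuinely invoke the Banach/Fr\'echet-valued Oka--Cartan theory on the Stein base $X_0$. A secondary technical concern is tracking the Fr\'echet semi-norms of Proposition \ref{frechetprop} through the Mittag-Leffler iteration and through the analytic homomorphism $\varphi$ to ensure convergence in the correct topology; this requires continuity of the restriction maps between basis sets of $\mathfrak B$ and a careful choice of the neighbourhoods $W$ on which the finite free resolution is constructed.
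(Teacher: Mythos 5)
Your case (2) is exactly the paper's argument: cut off in the fibre with a Urysohn function $\rho\in C(\hat{G}_{\mathfrak a})\cong\mathfrak a$ equal to $1$ on the part of $K$ carrying the given seminorm and to $0$ outside $K$, extend $\rho f$ by zero to $\bar{p}^{-1}(Y_0)$, and invoke case (1). For case (1), however, the paper takes a much shorter route than yours: by the argument in the proof of Theorem \ref{thmB} (resting on Propositions \ref{cohcor} and \ref{vanprop}), the direct image $\hat{\mathcal A}:=\bar{p}_*\mathcal A$ is a Banach coherent analytic Fr\'echet sheaf on the Stein manifold $X_0$ with $\Gamma(c_{\mathfrak a}X,\mathcal A)\cong\Gamma(X_0,\hat{\mathcal A})$ and $\Gamma(\hat{Y},\mathcal A)\cong\Gamma(Y_0,\hat{\mathcal A})$, and the density statement is then Theorem 2.3(iv) of Leiterer \cite{Lt} applied to $\hat{\mathcal A}$ and the pair $(Y_0,X_0)$. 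Your Mittag--Leffler exhaustion ultimately reaches the same external input (Banach-valued Runge approximation over the Stein base), but only after a detour through Cartan A and B.

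That detour is where the genuine problems lie. First, invoking Theorem \ref{thmA} here is circular in this paper's logical order: the proof of Theorem \ref{thmA} given in the paper itself uses Theorem \ref{cechtriv} (case (2)) to replace local generators by global sections, so you may not assume global generation of $\mathcal A$ near $\overline{\hat{X}_{k+1}}$ as an ingredient of the Runge theorem without supplying an independent proof of it; the whole point of passing to $\bar{p}_*\mathcal A$ is that Leiterer's theory already packages Cartan A, B and Runge for Banach coherent sheaves on $X_0$. Second, your iteration only enforces $\|f_{k+1}-f_k\|_{V}<\varepsilon/2^{k+1}$ for the single fixed seminorm $\|\cdot\|_V$; this makes $(f_k)$ Cauchy on $V$ but not with respect to the exhausting family of seminorms of the spaces $\Gamma(\hat{X}_j,\mathcal A)$, so the limit does not define a global section of $\mathcal A$ on $c_{\mathfrak a}X$. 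The standard repair is to require smallness of $f_{k+1}-f_k$ in the first $k$ seminorms of an exhaustion of $c_{\mathfrak a}X$, which forces your one-step lemma to approximate on all of $\hat{X}_{k-1}$ rather than merely on $V$. Both defects are repairable, but once repaired the argument essentially collapses into the paper's: push $\mathcal A$ down to $X_0$ and quote \cite{Lt}.
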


\section{Proofs: preliminaries}

\SkipTocEntry\subsection{\v{C}ech cohomology}
\label{proofnotation}

For a topological space $X$ and a sheaf of abelian groups $\mathcal S$ on $X$ let $\Gamma(X,\mathcal S)$ denote the abelian group of continuous sections of $\mathcal S$ over $X$.

Let $\mathcal U$ be an open cover of $X$.
By $\mathcal C^i(\mathcal U,\mathcal S)$ we denote the space of \v{C}ech $i$-cochains with values in $\mathcal S$, by $\delta:\mathcal C^i(\mathcal U,\mathcal S) \rightarrow \mathcal C^{i+1}(\mathcal U,\mathcal R)$ the \v{C}ech coboundary operator (see, e.g., \cite{GrRe} for details), by
$\mathcal Z^i(\mathcal U,\mathcal S):=\{\sigma \in \mathcal  C^i(\mathcal U,\mathcal S): \delta \sigma=0\}$
the space of $i$-cocycles, and by $\mathcal B^i(\mathcal U,\mathcal S):=\{\sigma \in  \mathcal Z^i(\mathcal U,\mathcal S): \sigma=\delta( \eta), \eta \in \mathcal C^{i-1}(\mathcal U,\mathcal S)\}$
the space of $i$-coboundaries.
The \v{C}ech cohomology groups $H^i(\mathcal U,S)$, $i \geq 0$, are defined by $$H^i(\mathcal U,S):=\mathcal Z^i(\mathcal U,\mathcal S)/\mathcal B^i(\mathcal U,\mathcal S), \quad i \geq 1,$$ and $H^0(\mathcal U,\mathcal S):=\Gamma(\mathcal U,\mathcal S)$.

\SkipTocEntry\subsection{$\bar{\partial}$-equation}

Let $B$ be a complex Banach space, $D_0 \subset X_0$ be a strictly pseudoconvex domain in a complex manifold $X_0$. We fix a system of local coordinates on $D_0$ and consider a cover $\{W_{0,i}\}_{i \geq 1}$ of $D_0$ by coordinate patches.
By $\Lambda_b^{(0,q)}(D_0,B)$, $q \geq 0$, we denote the space of bounded continuous $B$-valued $(0,q)$-forms $\omega$ on $D_0$ endowed with norm
\begin{equation}
\label{approxsup}
\|\omega\|_{D_0}=\|\omega\|_{D_0}^{(0,q)}:=\sup_{x \in U_{i,0}, i \geq 1, \alpha}\|\omega_{\alpha,i} (x)\|_B, 
\end{equation}
where $\omega_{\alpha,i}$ ($\alpha$ is a multiindex) are coefficients of form $\omega|_{W_{0,i}} \in  \Lambda_b^{(0,q)}(W_{0,i},B)$ written in local coordinates on $W_{0,i}$. 

The next lemma follows easily from results in \cite{HL} (proved for $B=\mathbb C$), as all integral presentations and estimates are preserved when passing to the case of Banach-valued forms.

\begin{lemma}
\label{hl1}
There exists a bounded linear operator $$R_{D_0,B}\in \mathcal L\left(\Lambda_b^{(0,q)}(D_0,B), \Lambda_b^{(0,q-1)}(D_0,B)\right), \quad q \geq 1,$$ such that if $\omega \in \Lambda_b^{(0,q)}(D_0,B)$ is $C^\infty$ and satisfies $\bar{\partial}\omega=0$ on $D_0$, then $\bar{\partial} R_{D_0,B}\omega=\omega$ on $D_0$.
\end{lemma}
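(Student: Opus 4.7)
The plan is to lift the scalar-valued Henkin-Leiterer homotopy operator for $\bar\partial$ on a strictly pseudoconvex domain directly to the Banach-valued setting by observing that the integral kernels involved are independent of the target space. Concretely, I would start from the Henkin-Leiterer construction in \cite{HL}: on a strictly pseudoconvex domain $D_0$ (worked out locally in coordinate patches $W_{0,i}$) one obtains operators of the form
\begin{equation*}
(R_{D_0}\omega)(z)=\int_{D_0}\omega(\zeta)\wedge K_0(\zeta,z)+\int_{\partial D_0}\omega(\zeta)\wedge K_1(\zeta,z),
\end{equation*}
where $K_0,K_1$ are explicit scalar-valued kernels built from a Leray section and a barrier function, and one has bounded mapping properties $\Lambda_b^{(0,q)}(D_0,\mathbb C)\to\Lambda_b^{(0,q-1)}(D_0,\mathbb C)$ together with the identity $\bar\partial R_{D_0}\omega=\omega$ whenever $\omega$ is $C^\infty$ and $\bar\partial$-closed.

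Next I would define $R_{D_0,B}$ by interpreting the same integrals coefficient-wise as Bochner integrals of $B$-valued functions against scalar kernels; this is legitimate because the coefficients $\omega_{\alpha,i}$ in any coordinate chart take values in $B$ while $K_0,K_1$ remain scalar. Boundedness in the norm \eqref{approxsup} then follows by a standard Hahn-Banach reduction: for any $\phi\in B^*$ with $\|\phi\|\le1$ the scalar form $\phi\circ\omega$ lies in $\Lambda_b^{(0,q)}(D_0,\mathbb C)$ with $\|\phi\circ\omega\|_{D_0}\le\|\omega\|_{D_0}$, and commutativity of the Bochner integral with bounded linear functionals gives
\begin{equation*}
\phi\bigl((R_{D_0,B}\omega)(z)\bigr)=\bigl(R_{D_0}(\phi\circ\omega)\bigr)(z),
\end{equation*}
so the scalar estimate $\|R_{D_0}\|\le C$ yields $\|R_{D_0,B}\omega\|_{D_0}\le C\|\omega\|_{D_0}$ after taking the supremum over $\phi$ in the unit ball of $B^*$.

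The homotopy identity $\bar\partial R_{D_0,B}\omega=\omega$ for $C^\infty$, $\bar\partial$-closed $\omega$ is proved by the same device: $\phi\circ\omega$ is smooth, scalar-valued, and $\bar\partial$-closed, so the scalar statement gives $\bar\partial R_{D_0}(\phi\circ\omega)=\phi\circ\omega$; commuting $\bar\partial$ with both the Bochner integral (justified by differentiating under the integral against the smooth kernels $K_0,K_1$, exactly as in the scalar case) and with $\phi$ yields $\phi\circ(\bar\partial R_{D_0,B}\omega-\omega)=0$ pointwise, and the Hahn-Banach theorem forces $\bar\partial R_{D_0,B}\omega=\omega$.

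I expect the only non-routine point to be the verification that every differentiation-under-the-integral and every exchange of order in the Henkin-Leiterer proof carries over unchanged to the Bochner setting. This is really just bookkeeping: the relevant kernels are smooth away from the diagonal with integrable singularities across it, the $B$-valued coefficients are bounded, and $B$-valued Bochner integrals against scalar $L^1$ kernels behave exactly as their scalar counterparts under differentiation and under application of bounded linear functionals. So once the scalar arguments of \cite{HL} are rewritten with $\|\cdot\|_B$ in place of $|\cdot|$, the proof goes through verbatim.
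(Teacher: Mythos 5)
Your proposal is correct and takes essentially the same route as the paper, which disposes of this lemma in one sentence by noting that the integral representation formulas and estimates of Henkin--Leiterer carry over unchanged to Banach-valued forms because the kernels are scalar. Your Hahn--Banach reduction and the coefficient-wise Bochner-integral interpretation are just a more explicit write-up of that observation.
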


\section{Proof of Proposition \ref{basicpropthm}}

%
%
%
%

Given $f \in \mathcal O_{\mathfrak a}(X)$ denote $f_{x_0}:=f|_{p^{-1}(x_0)}$. Let $\hat{f}_{x_0} \in C(\hat{G}_{\mathfrak a})$ be such that $j^*\hat{f}_{x_0}=f_{x_0}$. The family $\{\hat{f}_{x_0}\}_{x_0\in X_0}$ determines a function $\hat f$ on $c_{\mathfrak a}X$ such that $\hat f(x)=\hat f_{x_0}(x)$ for $x_0:=\bar{p}(x)$. Using a normal family argument one shows that $\hat f\in\mathcal O(c_{\mathfrak a}X)$, see, e.g., \cite{Lin} or \cite[Lemma~2.3]{BrK} for similar results. Clearly, $\iota^*\hat f=f$.
Since the homomorphism $\,\hat{}:\mathfrak a \rightarrow C(\hat{G}_{\mathfrak a})$ is an injection, the constructed homomorphism $i:\mathcal O_{\mathfrak a}(X) \rightarrow \mathcal O(c_{\mathfrak a}X)$, $i(f):=\hat f$, is an injection as well. This completes the proof of the first assertion.

For the proof of the second assertion suppose that $\mathfrak a$ is self-adjoint. Then $\mathfrak a \cong C(\hat{G}_{\mathfrak a})$ and we can define the inverse homomorphism
$i^{-1}:\mathcal O(c_{\mathfrak a}X) \rightarrow \mathcal O(X)$ by the formula $$i(\hat{f}):=\iota^* \hat{f},\quad \hat{f} \in \mathcal O(c_{\mathfrak a}X).$$ 
Since $i^{-1}(\hat{f})|_{p^{-1}(x_0)}=j^*\bigl (\hat{f}|_{\bar{p}^{-1}(x_0)}\bigr) \in \mathfrak a$, $x_0 \in X_0,$ we have $i^{-1}(\hat{f}) \in \mathcal O_{\mathfrak a}(X)$, i.e., $i^{-1}$ maps $\mathcal O(c_{\mathfrak a}X)$ into $\mathcal O_{\mathfrak a}(X)$.

\section{Proofs of Theorems \ref{thmA}, \ref{thmB} and \ref{cechtriv}}

In what follows all polydisks are assumed to have finite polyradii.

\medskip

\begin{proof}[Proof of Theorem \ref{thmB}]

We will need the following results.

\begin{proposition}
\label{vanprop} 

Let $U:=\hat{\Pi}(U_0,K)$, where 
$U_0 \subset X_0$ is open and biholomorphic to an open polydisk in $\mathbb C^n$, and $K \in \mathfrak Q$ (see~(\ref{base1})). 

The following is true:

\begin{enumerate}
\item[(1)]
Let $\mathcal R$ be an analytic sheaf over $U$ having a free resolution of length $4N$
\begin{equation}
\label{frseq}
\mathcal O^{k_{4N}}|_U\overset{\varphi_{4N-1}}{\to} \dots \overset{\varphi_{2}}{\to} \mathcal O^{k_{2}}|_U\overset{\varphi_1}{\to} \mathcal O^{k_{1}}|_U \overset{\varphi_0}{\to} \mathcal R|_W \to 0.
\end{equation}
If $N \geq n:=\dim_{\mathbb C} U_0$, then the induced sequence of sections truncated to the $N$-th term
\begin{equation*}
\Gamma(U,\mathcal O^{k_{N}}) \overset{\bar{\varphi}_{N-1}}{\to} \dots \overset{\bar{\varphi}_{2}}{\to} \Gamma(U,\mathcal O^{k_{2}}) \overset{\bar{\varphi}_1}{\to} \Gamma(U,\mathcal O^{k_{1}}) \overset{\bar{\varphi}_0}{\to} \Gamma(U,\mathcal R) \to 0
\end{equation*}
is exact.

\medskip

\item[(2)] Suppose that free resolution (\ref{frseq}) exists for every $N$. Then $H^i(U,\mathcal R)=0$ for all $i \geq 1$.
\end{enumerate}
\end{proposition}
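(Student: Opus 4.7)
My plan is to first establish sheaf cohomology vanishing for the structure sheaf on $U$ using the Banach-valued $\bar{\partial}$-technique, and then propagate this to $\mathcal R$ via the free resolution by standard homological algebra.

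The starting observation is that over $U := \hat{\Pi}(U_0, K)$ with $U_0$ biholomorphic to a polydisk and $K \in \mathfrak Q$, density of $j(G)$ in $\hat{G}_{\mathfrak a}$ combined with the definition of holomorphy on $U$ yields a natural identification of $\Gamma(U, \mathcal O^k)$ with the Fr\'{e}chet space $\mathcal O(U_0, C(K)^k)$ of $C(K)^k$-valued holomorphic functions on the polydisk $U_0$: a continuous function on $U_0 \times K$ whose pullback by $\tilde j$ is holomorphic is the same data as a holomorphic map $U_0 \to C(K)^k$. Under this dictionary, sheaf cohomology on $U$ becomes a Banach-valued question on the polydisk.

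The central technical input is then the vanishing $H^j(U, \mathcal O^k) = 0$ for all $j \geq 1$. I would prove it by constructing a Dolbeault-type fine resolution
\begin{equation*}
0 \to \mathcal O^k|_U \to \mathcal E^{0,0} \to \mathcal E^{0,1} \to \cdots \to \mathcal E^{0,n} \to 0,
\end{equation*}
where $\mathcal E^{0,q}$ is the sheaf whose sections over a product set $V_0 \times L \subset U_0 \times K$ are the continuous $C(L)^k$-valued smooth $(0,q)$-forms on $V_0$. Exactness at stalks reduces to solving $\bar{\partial}$ for Banach-valued forms on a neighbourhood of an arbitrary point of $U_0$, which follows from Lemma \ref{hl1} applied to small strictly pseudoconvex subdomains, with a Mittag--Leffler/approximation scheme to exhaust $U_0$. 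Fineness of $\mathcal E^{0,q}$ is inherited from smooth partitions of unity on $U_0$ (no smooth structure on $K$ is needed). Consequently $H^j(U, \mathcal O^k)$ is computed by the global Dolbeault complex of $C(K)^k$-valued $(0,q)$-forms on $U_0$; this complex is exact in positive degrees by pseudoconvexity of $U_0$, and it has length $n$.

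Given this vanishing, the rest is the classical diagram chase. Set $\mathcal K_0 := \mathcal R$ and $\mathcal K_i := \ker \varphi_{i-1}$; the resolution \eqref{frseq} breaks into short exact sequences $0 \to \mathcal K_{i+1} \to \mathcal O^{k_{i+1}}|_U \to \mathcal K_i \to 0$, and the long cohomology sequence combined with $H^j(U, \mathcal O^{k_{i+1}}) = 0$ for $j \geq 1$ gives $H^j(U, \mathcal K_i) \cong H^{j+1}(U, \mathcal K_{i+1})$ as long as both appear in the resolution. Iterating $n$ times, exactness of the truncated sequence of sections at the $i$-th term ($1 \leq i \leq N$) amounts to $H^1(U, \mathcal K_i) \cong H^{n+1}(U, \mathcal K_{i+n}) = 0$; the last vanishing follows because the tail of the resolution after $\mathcal K_{i+n}$ has length at least $4N - (i+n) \geq 2N \geq 2n$, so combining the horizontal free resolution with the vertical Dolbeault resolution of length $n$ into a double complex and comparing spectral sequences forces all cohomology of $\mathcal K_{i+n}$ above degree $n$ to vanish. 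This gives part (1). Part (2) is the same argument with resolutions of arbitrary finite length, yielding $H^j(U, \mathcal R) = 0$ for all $j \geq 1$.

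The main obstacle I anticipate is making the Banach-valued Dolbeault machinery rigorous on the non-strictly-pseudoconvex polydisk $U_0$: Lemma \ref{hl1} only provides bounded $\bar{\partial}$-solution operators on strictly pseudoconvex domains, and passing to $U_0$ with uniform control in the Banach variable $C(K)^k$ requires an exhaustion/Mittag--Leffler correction. A secondary point is verifying that the sheaves $\mathcal E^{0,q}$ behave well under the mixed (holomorphic in $U_0$, purely topological in $K$) structure of $U$, which is where the arguments from \cite{Lemp} will be crucial.
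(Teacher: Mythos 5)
Your overall architecture (identify sections of $\mathcal O^k$ with Banach-valued holomorphic functions on the polydisk, solve $\bar\partial$ via Lemma \ref{hl1} on strictly pseudoconvex pieces, globalize by a Runge/Mittag--Leffler scheme, then dimension-shift along the free resolution) matches several ingredients of the actual proof. But the decisive step is wrong: you reduce everything to the claim that $H^{n+1}(U,\mathcal K_{i+n})=0$, justified by saying that a Dolbeault-type fine resolution of $\mathcal O^k$ of length $n$ ``forces all cohomology of $\mathcal K_{i+n}$ above degree $n$ to vanish.'' A fine resolution of length $n$ of the \emph{structure sheaf} bounds the cohomology of $\mathcal O^k$ only; it does not bound the cohomological dimension of $U$ for arbitrary analytic sheaves such as the kernel sheaves $\mathcal K_{i+n}$. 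This matters here because $U=U_0\times K$ with $K\subset\hat G_{\mathfrak a}$ can have \emph{infinite} covering dimension (e.g.\ for $AP$ the fibre is an inverse limit of tori), so there is no a priori degree beyond which sheaf cohomology on $U$ vanishes, and your dimension shift never terminates. The double-complex/spectral-sequence patch you sketch would additionally require that tensoring the free resolution with the sheaves $\mathcal E^{0,q}$ preserves exactness (a flatness statement in the spirit of Malgrange's theorem), which is neither available nor even formulated in this mixed holomorphic/continuous setting. The same gap sinks your one-line treatment of part (2): ``resolutions of arbitrary finite length'' do not yield $H^j(U,\mathcal R)=0$ without some device that terminates the shift.

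The paper terminates the shift by two devices you do not have. For part (1) it pushes the resolution down to the base by the direct image $q_*$ along $\bar V_{0,k}\times\bar N_k\to\bar V_{0,k}$ (surjectivity of $q_*$ of epimorphisms being secured by continuous partitions of unity in the fibre variable, Lemma \ref{surjlem}), and then uses that the \emph{topological} dimension of $\bar V_{0,k}\subset\mathbb C^n$ is $2n$, so $H^{2n+1}$ of any sheaf on it vanishes; this is exactly why the hypothesis demands a resolution of length $3N$--$4N$ with $N\ge n$ rather than the length $\approx i+n$ your scheme would need, and why the exactness is first obtained on the compact product sets $\bar V_k$ and only then transported to $U$ by a Runge-condition argument (Lemmas \ref{runge2}, \ref{sectionslem}). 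For part (2) the termination comes from a \v Cech argument with finite covers of class $(P)$: a cover with $s$ elements has $H^{l+s}(\mathcal U,\cdot)=0$ for cardinality reasons, and cocycles with values in free sheaves die after refinement (Lemma \ref{cechlem0}, which is where your $\bar\partial$-technique actually enters). A minor further inaccuracy: your dictionary $\Gamma(U,\mathcal O^k)\cong\mathcal O(U_0,C(K)^k)$ is not correct as stated, since $K$ is open and non-compact so $C(K)$ is not a Banach space and holomorphic functions on $U_0\times K$ need not be bounded in the fibre direction; this is another reason the paper works on the closed exhaustion $\bar V_k$ with compact fibre $\bar N_k$.
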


Let $\mathcal A$ be a coherent sheaf on $c_{\mathfrak a}X$. 

\begin{proposition}
\label{cohcor}
Every point $x_0 \in X_0$ has a neighbourhood $U_0$ such that for each $N \geq 1$ there exists a free resolution of sheaf $\mathcal A$ over $\bar{p}^{-1}(U_0)$ having length $N$  (see~Definition \ref{coh0}). 
\end{proposition}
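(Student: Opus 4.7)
My plan is to use coherence of $\mathcal A$ together with compactness of the fibre $\bar p^{-1}(x_0) \cong \hat G_{\mathfrak a}$ to reduce to a finite cover of a whole tube $\bar p^{-1}(U_0)$ by basis sets $\hat\Pi(U_0,K_j)$ admitting free resolutions of arbitrary length, and then to paste these local resolutions together using a continuous partition of unity on $\hat G_{\mathfrak a}$. The crucial feature is that self-adjointness of $\mathfrak a$ gives $C(\hat G_{\mathfrak a})\cong \mathfrak a$, so that the bump functions on $\hat G_{\mathfrak a}$ pull back to elements of $\mathcal O(c_{\mathfrak a}X)$ that are constant along the base of any trivialization.

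\emph{Step 1: reduction to a finite cover.} By coherence, each $\eta\in\bar p^{-1}(x_0)$ has a neighborhood admitting free resolutions of arbitrary length; since $\mathfrak B$ of subsection~\ref{topsect} is a basis of topology on $c_{\mathfrak a}X$, I may take this neighborhood of the form $\hat\Pi(V_0(\eta),K(\eta))$. Compactness of $\bar p^{-1}(x_0)\cong\hat G_{\mathfrak a}$ produces a finite subcover $\hat\Pi(U_{0,j},K_j)$, $j=1,\dots,m$. Fix any polydisk neighborhood $U_0$ of $x_0$ contained in every $U_{0,j}$. Then $\{\hat\Pi(U_0,K_j)\}_{j=1}^m$ covers $\bar p^{-1}(U_0)$ and each $\hat\Pi(U_0,K_j)\subset\hat\Pi(U_{0,j},K_j)$ inherits free resolutions of every finite length.

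\emph{Step 2: pasting via partition of unity.} Since $\hat G_{\mathfrak a}$ is compact Hausdorff, choose a partition of unity $\{\chi_j\}_{j=1}^m\subset C(\hat G_{\mathfrak a})$ subordinate to $\{K_j\}$. Under $C(\hat G_{\mathfrak a})\cong\mathfrak a$ each $\chi_j$ corresponds to an element of $\mathfrak a$, which lifts to a function on $c_{\mathfrak a}X$ that is constant along the base of every trivialization and hence lies in $\mathcal O(c_{\mathfrak a}X)$. Fix, for each $j$, generators $e_1^j,\dots,e_{\ell_1^j}^j$ of $\mathcal A$ over $\hat\Pi(U_0,K_j)$ coming from the first term of the local resolution. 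The products $\chi_j e_i^j$ extend by zero (outside $\hat\Pi(U_0,\supp\chi_j)$) to sections $\tilde e_i^j$ of $\mathcal A$ over $U:=\bar p^{-1}(U_0)$. At any $x\in U$ at least one $\chi_j(x)\neq 0$, and for such $j$ the germ $\chi_j$ is a unit in $\mathcal O_x$; hence $\{\tilde e_i^j\}_{i,j}$ generates $\mathcal A_x$. This gives a surjection
\begin{equation*}
\varphi_0:\mathcal O^{M_1}|_U \longrightarrow \mathcal A|_U \to 0, \qquad M_1:=\sum_{j=1}^m \ell_1^j.
\end{equation*}

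\emph{Step 3: iteration and the main obstacle.} Set $\mathcal K_1:=\ker\varphi_0$. Schanuel's lemma applied to the two presentations of $\mathcal A|_{\hat\Pi(U_0,K_j)}$ — one coming from $\varphi_0$ and one from the first stage of the given local resolution — yields $\mathcal K_1|_{\hat\Pi(U_0,K_j)} \oplus \mathcal O^{\ell_1^j}|_{\hat\Pi(U_0,K_j)} \cong \mathcal L_1^j \oplus \mathcal O^{M_1}|_{\hat\Pi(U_0,K_j)}$, where $\mathcal L_1^j$ is the first syzygy of $\mathcal A$ on $\hat\Pi(U_0,K_j)$. Since $\mathcal L_1^j$ has a local free resolution of length $N-1$, so does $\mathcal K_1|_{\hat\Pi(U_0,K_j)}$. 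Applying Step~2 to $\mathcal K_1$ then produces a surjection $\mathcal O^{M_2}|_U\to\mathcal K_1|_U$, and after $N$ such iterations the chain of surjections assembles into the required length-$N$ free resolution of $\mathcal A|_U$. The main obstacle is carrying out the partition-of-unity splicing and the Schanuel identification compatibly with the Fr\'echet topology on sections (Proposition~\ref{frechetprop}): the fibre $\hat G_{\mathfrak a}$ typically has infinite covering dimension (e.g.\ for $\mathfrak a=\ell_\infty(G)$ or $AP(\mathbb Z^n)$, see Examples~\ref{compex}(2),(4)), so modules of sections over $U\cong U_0\times\hat G_{\mathfrak a}$ are honestly Banach- or Fr\'echet-valued in the base direction, and it is precisely to accommodate this functional-analytic layer that Lempert's constructions for Banach-valued coherent sheaves \cite{Lemp} are invoked.
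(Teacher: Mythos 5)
Your Steps 1 and 2 are correct and constitute a genuinely different gluing mechanism from the paper's. The paper amalgamates the local resolutions two fibre-pieces at a time (Lemma \ref{joinprop}): it compares the two end presentations over the overlap, produces a transition matrix $H\in\mathcal O(U_1\cap U_2,GL_k(\mathbb C))$ lying in the identity component, splits it as $H=h_1h_2$ by a Cartan-type factorization (Corollary \ref{cartanlem}), and thereby glues two rank-$k$ presentations into a single one of the same rank. Your partition-of-unity surjection $\varphi_0:\mathcal O^{M_1}|_U\to\mathcal A|_U$ instead handles all $m$ pieces at once at the cost of letting the rank grow to $\sum_j\ell_1^j$; the extension by zero of $\chi_je_i^j$ is legitimate (the same device appears in the paper's proofs of Theorem \ref{cechtriv}, case (2), and Lemma \ref{surjlem}), and surjectivity on stalks follows because the germ of $\chi_j$ is a unit wherever $\chi_j\neq 0$. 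This buys a simpler first stage and avoids the matrix factorization entirely.

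The gap is in Step 3. Schanuel's lemma is not available off the shelf here: its proof requires lifting the identity of $\mathcal A$ through the two presentations, i.e.\ analytic homomorphisms $\mathcal O^{M_1}\to\mathcal O^{\ell_1^j}$ and back commuting with the maps to $\mathcal A$, and in this category free sheaves are not known to be projective --- the needed lifting property is precisely the statement that the presentations are \emph{completely exact} (Lemma \ref{lem65}), which rests on Proposition \ref{vanprop}(1) and hence on the local resolutions having length at least $4n$ over sets of the special form $\hat{\Pi}(U_0,K)$ with $U_0$ a polydisk and $K\in\mathfrak Q$. This is the hard analytic content of the whole section, not a formality that can be deferred. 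Even granting the lifting, the conclusion ``since $\mathcal L_1^j$ has a resolution of length $N-1$, so does $\mathcal K_1$'' requires stripping the free summand off $\mathcal K_1\oplus\mathcal O^{\ell_1^j}\cong\mathcal L_1^j\oplus\mathcal O^{M_1}$; the paper's Three Lemma (Lemma \ref{lem7}) does exactly this but loses $n+1$ in resolution length at each application, so over $N$ iterations one must start from resolutions of length on the order of $nN$ and track the bookkeeping, and one must also shrink the base polydisk and the fibre sets $K_j$ at each stage (as the paper does via Lemmas \ref{coverlem} and \ref{exhlem}(3)), since the comparison homomorphisms are only produced over relatively compact subsets. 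None of this is carried out; your closing sentence names ``the main obstacle'' but defers it to Lempert's machinery rather than resolving it, so as written the induction does not close.
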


(In other words, we may assume that the open sets $W$ in Definition \ref{coh0} have the form $U=\bar{p}^{-1}(U_0)$, $U_0 \subset X_0$ is open.)

We prove Propositions \ref{vanprop} and \ref{cohcor} in subsections \ref{vanproof} and \ref{cohproof}, respectively.

Now, let $\hat{\mathcal A}:=\bar{p}_*\mathcal A$ be the direct image of sheaf $\mathcal A$ under projection $\bar{p}:c_{\mathfrak a}X \rightarrow X_0$. By definition, $\hat{\mathcal A}$ is a sheaf of modules over the sheaf of rings $\mathcal O^{C(\hat{G}_{\mathfrak a})}$ of germs of holomorphic functions on $X_0$ taking values in the Banach space $C(\hat{G}_{\mathfrak a})$.
By Propositions  \ref{cohcor} and \ref{vanprop}(2) every $x_0 \in X_0$ has a basis of neighbourhoods $U_0$ such that $H^i(U,\mathcal A)=0$, $i \geq 1$, $U:=\bar{p}^{-1}(U_0)$. Therefore,
\begin{equation}
\label{directiso}
H^i(c_{\mathfrak a}X,\mathcal A) \cong H^i(X_0,\hat{\mathcal A}), \quad i \geq 0
\end{equation}
(see, e.g., \cite[Ch.~F, Cor.~6]{Gun3}).
We have $$\Gamma(U,\mathcal A) \cong \Gamma(U_0,\hat{\mathcal A}), \quad \Gamma(U,\mathcal O) \cong \Gamma(U_0,\mathcal O^{C(\hat{G}_{\mathfrak a})}).$$ It follows from  Proposition \ref{cohcor} and Proposition \ref{vanprop}(1) that for every $x_0 \in X_0$ and each $N \geq 1$ there exist a neighbourhood $U_0$ of $x_0$ and an exact sequence of sections
\begin{equation*}
\Gamma(U_0,(\mathcal O^{C(\hat{G}_{\mathfrak a})})^{k_N}) \to \dots \to \Gamma(U_0,(\mathcal O^{C(\hat{G}_{\mathfrak a})})^{k_1}) \to \Gamma(U_0,\hat{\mathcal A}) \to 0.
\end{equation*}
This implies that we have an exact sequence of sheaves
\begin{equation}
\label{exseq8}
(\mathcal O^{C(\hat{G}_{\mathfrak a})})^{k_{N}}|_{U_0} \to \dots \to (\mathcal O^{C(\hat{G}_{\mathfrak a})})^{k_{1}}|_{U_0}  \to \hat{\mathcal A} |_{U_0} \to 0.
\end{equation}

For every open set $U_0 \subset X_0$ the spaces of sections $\Gamma(U_0,\hat{\mathcal A})$, $\Gamma(U_0,\mathcal O^{C(\hat{G}_{\mathfrak a})})$ can be endowed with a Fr\'{e}chet topology so that the homomorphisms of sections induced by sheaf homomorphisms in (\ref{exseq8}) are continuous; indeed, since $\Gamma(U_0,\hat{\mathcal A}) \cong \Gamma(U,\mathcal A)$, $\Gamma(U_0,\mathcal O^{C(\hat{G}_{\mathfrak a})}) \cong \Gamma(U,\mathcal O)$, this follows from Proposition \ref{frechetprop} with $U=\bar{p}^{-1}(U_0)$.
Hence, in the terminology of \cite{Lt}, $\hat{\mathcal A}$ is a Banach coherent analytic Fr\'{e}chet sheaf. Therefore, according to Theorem 2.3 (iii) in \cite{Lt}, $H^i(X_0,\hat{\mathcal A})=0$ for all $i \geq 1$. Isomorphism (\ref{directiso}) now implies the required statement.
\end{proof}
\begin{proof}[Proof of Theorem \ref{cechtriv}]

Case (1). Due to the argument in the proof of Theorem \ref{thmB}, we have isomorphisms of Fr\'{e}chet spaces  $\Gamma(c_{\mathfrak a}X,\mathcal A) \cong \Gamma(X_0,\hat{\mathcal A})$, $\Gamma(\hat{Y},\mathcal A) \cong \Gamma(Y_0,\hat{\mathcal A})$. Now the result follows from Theorem 2.3 (iv) in \cite{Lt} applied to $\hat{\mathcal A}$.

Case (2). It suffices to show that the restriction map $\Gamma(\bar{p}^{-1}(Y_0),\mathcal A) \rightarrow \Gamma(\hat{Y},\mathcal A)$ has dense image and then to apply the result of case (1). 

We have $\hat{Y}=\hat{\Pi}(Y_0,K)$ for some  $Y_0 \Subset X_0$ open simply connected, and $K \in \mathfrak Q$. 
Since $\hat{Y} \in \mathfrak B$, we may use the last assertion of Proposition \ref{frechetprop}: it suffices to show that  given a section $f \in \Gamma(\hat{Y},\mathcal A)$ for every $\varepsilon>0$ and every $k$ there exists a section $\tilde{f}_k \in \Gamma(\bar{p}^{-1}(Y_0),\mathcal A)$ such that $\|f-\tilde{f}_k\|_{V_k}<\varepsilon$. 

Without loss of generality we may identify $\hat{Y}$ with $Y_0 \times K$, and $\bar{p}^{-1}(Y_0)$ with $Y_0 \times \hat{G}_{\mathfrak a}$ (see subsection \ref{charts}).
Then sets $V_k$ have the form
$V_k=V_{0,k} \times N_k$, where each $V_{0,k}$ is open and simply connected and $N_k \in \mathfrak Q$ are such that $N_{k} \Subset N_{k+1} \Subset K$ for all $k$, and $K=\cup_k N_k$ (see Lemma \ref{exhlem}(1) below).
Since space $\hat{G}_{\mathfrak a}$ is compact and, therefore, normal, for each $k$ there exists a function $\rho_k \in C(\hat{G}_{\mathfrak a})$ such that $0 \leq \rho_k \leq 1$ on $\hat{G}_{\mathfrak a}$, $\rho_k \equiv 1$ on $N_k$, and $\rho_k \equiv 0$ on $\hat{G}_{\mathfrak a} \setminus \bar{N}_{k+1}$. By definition, $\Gamma(Y_0 \times K,\mathcal A)$ is a module over $\Gamma(Y_0 \times K,\mathcal O)$, hence we can define $\tilde{f}_k:=\rho_k f \in \Gamma(Y_0 \times \hat{G}_{\mathfrak a},\mathcal A)$. Then $f-\tilde{f}_k=0$ on $Y_0 \times N_k$, so $\|f-\tilde{f}_k\|_{V_k}=0$. Thus, $\tilde{f}_k$ is the required approximation.
\end{proof}

\begin{proof}[Proof of Theorem \ref{thmA}]
Let $N \geq n$. Since sheaf $\mathcal A$ is coherent, there exists a neighbourhood $U$ of $x$ over which there is a free resolution 
\begin{equation}
\label{cohc}
\mathcal O^{m_{4N}}|_U\overset{\varphi_{4N-1}}{\to} \dots \overset{\varphi_{2}}{\to} \mathcal O^{m_{2}}|_U\overset{\varphi_1}{\to} \mathcal O^{m_{1}}|_U \overset{\varphi_0}{\to} \mathcal A|_U \to 0
\end{equation}
of length $4N$. It follows from the exactness of sequence (\ref{cohc}) that there exist sections $h_1,\dots,h_{m_1} \in \Gamma(U,\mathcal A)$ that generate $\phantom{}_{x}\mathcal A$ as an $\phantom{}_{x}\mathcal O$-module.
Now, it suffices to show that there exist a neighbourhood $V \subset U$ of $x$, global sections $f_1,\dots,f_{m_1} \in \Gamma(c_{\mathfrak a}X,\mathcal A)$ and functions $r_{ij} \in \mathcal O(V)$, $1 \leq i,j \leq m_1$, such that 
\begin{equation}
\label{repr1}
h_i|_{V}=\sum_{j=1}^{m_1} r_{ij}f_j|_{V}, \quad 1 \leq i \leq m_1.
\end{equation}
Without loss of generality we may assume that $U=\hat{\Pi}(U_0,K) \in \mathfrak B$, where $U_0 \subset X_0$ is biholomorphic to an open polydisk in $\mathbb C^n$ and is holomorphically convex in $X_0$, and $K \in \mathfrak Q$. By Proposition \ref{frechetprop} the topology on $\Gamma(W,\mathcal A)$ is determined by semi-norms
\begin{equation}
\label{seminorm}
\|h\|_{V_k}:=\inf_h\left\{\sup_{x \in V_k}|g(x)|: g \in \Gamma(V_k,\mathcal O^{m_{1}}),~h=\bar{\varphi}_0(g)\right\},
\end{equation}
where $\bar{\varphi}_0$ is the homomorphism of sections induced by $\varphi_0$ in (\ref{cohc}),
and open sets $V_k \in \mathfrak B$ are such that $V_{k} \Subset V_{k+1} \Subset W$ for all $k$, and $W=\cup_k V_k$, see~Lemma \ref{exhlem}(2) below; by definition, $V_k=V_{0,k} \times N_k$, where $V_{0,k} \Subset U_0$, $N_k \Subset K$ are open. Without loss of generality we may assume that each $V_{0,k}$ is biholomorphic to an open polydisk in $\mathbb C^n$ and is holomorphically convex in $X_0$.

Let $V:=V_{k_0}$, where $k_0$ is chosen so that $x \in V_{k_0}$.
It follows from the proof of Theorem \ref{cechtriv} (case (2) for $\hat{Y}:=U$)
that for every $\varepsilon>0$ there exist sections $f_1,\dots,f_{m_1} \in \Gamma(c_{\mathfrak a}X,\mathcal A)$ such that $\|h_i-f_i\|_{V}<\varepsilon$ for all $i$.
Now, by Proposition \ref{vanprop}(1) the sequence of sections corresponding to (\ref{cohc})
\begin{equation}
\label{coh09}
\dots \to \Gamma(V,\mathcal O^{m_{1}}) \overset{\bar{\varphi}_0}{\to} \Gamma(V,\mathcal A) \to 0
\end{equation}
is exact. Note that $\Gamma(V,\mathcal O^{m_1})$ consists of $m_1$-tuples of holomorphic functions on $V$. Let $\tilde{h}_i:=(0,\dots,1,\dots,0)$ ($1$ is in the $i$-th position), $1 \leq i \leq m_1$. Without loss of generality we may assume that
$h_i|_V=\bar{\varphi}_0(\tilde{h}_i)$. Since $\bar{\varphi}_0$ is surjective, there exist functions $\tilde{f}_i \in \Gamma(V,\mathcal O^{m_{1}})$ such that $\bar{\varphi}_0(\tilde{f}_i)=f_i|_{V}$. It follows from the definition of semi-norm $\|\cdot\|_V$, see~(\ref{seminorm}), that functions $\tilde{f}_i$ can be chosen in such a way that
\begin{equation}
\label{ineqtilde}
\sup_{x \in V}|\tilde{h}_i(x)-\tilde{f}_i(x)|<2\varepsilon.
\end{equation}
Since $\bar{\varphi}_0$ is a $\mathcal O(V)$-module homomorphism, the required identity (\ref{repr1}) would follow once we found functions $r_{ij} \in \Gamma(V,\mathcal O)$, $1 \leq i,j \leq m_1$, such that 
\begin{equation*}
\tilde{h}_i=\sum_{j=1}^{m_1} r_{ij}\tilde{f}_j, \quad 1 \leq i \leq m_1.
\end{equation*}
The latter system of linear equations (with respect to $r_{ij}$) can be rewritten as a matrix equation $H=FR$ with respect to $R=(r_{ij})_{i,j=1}^{m_1} \in \mathcal O\bigl(V,M_{n}(\mathbb C)\bigr)$, where $M_{n}(\mathbb C)$ denotes the set of $n \times n$ complex matrices, $H=(\tilde{h}_i)_{i=1}^{m_1} \in \mathcal O\bigl(V,GL_n(\mathbb C)\bigr)$ ($\tilde{h}_i$ are the columns of $H$) is the identity matrix, here $GL_n(\mathbb C) \subset M_{n}(\mathbb C)$ is the group of invertible matrices and $F=(\tilde{f}_i)_{i=1}^{m_1} \in \mathcal O\bigl(V,M_n(\mathbb C)\bigr)$ ($\tilde{f}_i$ are the columns of $F$). Since $\varepsilon>0$ can be chosen arbitrarily small, in view of (\ref{ineqtilde}) we may assume that $F \in \mathcal O\bigl(V,GL_n(\mathbb C)\bigr)$. Hence, we can define $R:=F^{-1}H$. 

This completes the proof of Theorem \ref{thmA}.
\end{proof}

\subsection{Auxiliary topological results}
For the proofs of Propositions \ref{vanprop} and \ref{cohcor} we will need the following results. 

Let $\mathcal L=\{L_i\}$ be an open cover of $\hat{G}_{\mathfrak a}$.
Recall that a \textit{refinement} of $\mathcal L$ is an open cover $\mathcal L'=\{L'_j\}$ of $\hat{G}_{\mathfrak a}$ such that each $L'_j \Subset L_{i}$ for some $i=i(j)$. 

Since $\hat{G}_{\mathfrak a}$ is compact, each open cover of $\hat{G}_{\mathfrak a}$ has a finite subcover.

\begin{lemma}
\label{coverlem}
Let $\mathcal L$ be a finite open cover of $\hat{G}_{\mathfrak a}$. There exist finite refinements $\mathcal L^k=\{L_j^k: L_j^k \in \mathfrak Q\}$ of $\mathcal L$  of the same cardinality such that $L_j^{k+1} \Subset L_j^k$ for all $j,k$.
\end{lemma}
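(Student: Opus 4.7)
The plan is to construct the sequence of refinements in two stages: an initial refinement of $\mathcal L$ by basic sets of some fixed cardinality $N$, followed by an inductive shrinkage that exploits the one-parameter structure of elements of $\mathfrak Q$.

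First I would produce $\mathcal L^1$. The space $\hat{G}_{\mathfrak a}$ is compact Hausdorff (being a closed subset of $M_{\mathfrak a}$), hence regular, and $\mathfrak Q$ is a basis of its topology by construction. For each $\eta \in \hat{G}_{\mathfrak a}$ choose $L_{i(\eta)} \in \mathcal L$ containing $\eta$ and then, by regularity together with the basis property, a $B_\eta \in \mathfrak Q$ with $\eta \in B_\eta$ and $\overline{B_\eta} \subset L_{i(\eta)}$. Compactness of $\hat{G}_{\mathfrak a}$ yields a finite subcover which, reindexed, becomes $\mathcal L^1 = \{L_j^1\}_{j=1}^N$. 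By construction $L_j^1 \Subset L_{i(j)}$ for each $j$, so $\mathcal L^1$ is a refinement of $\mathcal L$ in the required compact-containment sense.

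The inductive step is the heart of the argument. Suppose $\mathcal L^k = \{L_j^k\}_{j=1}^N$ has been constructed, with each
$$L_j^k=\bigl\{\eta \in \hat{G}_{\mathfrak a}:\varphi_j(\eta)<\varepsilon_{j,k}\bigr\}, \qquad \varphi_j(\eta):=\max_{1 \leq i \leq m_j}|h_i^{(j)}(\eta)-h_i^{(j)}(\eta_{0,j})|,$$
in accordance with \eqref{base1}. Since $\hat{G}_{\mathfrak a}$ is normal, the shrinking lemma furnishes an open cover $\{W_j\}_{j=1}^N$ of $\hat{G}_{\mathfrak a}$ with $\overline{W_j} \subset L_j^k$ for every $j$. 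The continuous function $\varphi_j$ attains some maximum $\mu_j<\varepsilon_{j,k}$ on the compact set $\overline{W_j}$; pick $\varepsilon_{j,k+1}\in(\mu_j,\varepsilon_{j,k})$ and set
$$L_j^{k+1}:=\bigl\{\eta\in\hat{G}_{\mathfrak a}:\varphi_j(\eta)<\varepsilon_{j,k+1}\bigr\}\in\mathfrak Q.$$
Then $\overline{W_j}\subset L_j^{k+1}$, so $\{L_j^{k+1}\}_{j=1}^N$ is still an open cover of $\hat{G}_{\mathfrak a}$; and $\overline{L_j^{k+1}}\subset\{\varphi_j\leq\varepsilon_{j,k+1}\}\subset L_j^k$, giving $L_j^{k+1}\Subset L_j^k$. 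Iterating yields the desired sequence.

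The main obstacle I anticipate is preserving the cardinality $N$ throughout the construction. A naive approach—covering each $\overline{W_j}$ by many small basic sets whose closures lie in $L_j^k$—would blow up the number of sets at each step, and $\mathfrak Q$ is not closed under finite unions, so one cannot merge them back. The key observation bypassing this is that each basic set admits a distinguished one-parameter family of shrinkages obtained by decreasing the single threshold $\varepsilon$ while keeping the base point $\eta_{0,j}$ and the generating functions $h_1^{(j)},\dots,h_{m_j}^{(j)}$ fixed; compactness of $\overline{W_j}$ inside the open basic set $L_j^k$ then guarantees that some such shrinkage of $L_j^k$ still contains $\overline{W_j}$, which closes the induction index-by-index.
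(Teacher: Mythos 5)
Your proof is correct. It differs from the paper's argument in one structural respect: the paper fixes a single shrinkage $\mathcal L'=\{L'_j\}$ of $\mathcal L$ at the outset, chooses Urysohn functions $\rho_j\in C(\hat{G}_{\mathfrak a})$ with $\rho_j\equiv 1$ on $\bar L'_j$ and $\rho_j\equiv 0$ on $\hat{G}_{\mathfrak a}\setminus L_{i(j)}$, and then defines the whole nested family in one stroke as $L_j^k:=\{\eta:\rho_j(\eta)>1-\tfrac{1}{2k}\}$; since every $L_j^k$ contains the fixed compact set $\bar L'_j$, the covering property holds for all $k$ simultaneously and no induction is needed. You instead shrink the original basis elements by lowering their defining thresholds, which obliges you to re-apply the shrinking lemma at every stage to certify that the new family still covers. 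Both arguments rest on the same observation -- that members of $\mathfrak Q$ are sublevel sets of continuous functions and can be shrunk by tightening a single threshold -- but the paper's one-shot construction is shorter, while yours keeps each $L_j^k$ literally of the form \eqref{base1} with data $(\eta_{0,j},h_i^{(j)})$ inherited from the basis, rather than relying on the (easy but unstated) remark that a superlevel set $\{\rho_j>1-\varepsilon\}$ of a $[0,1]$-valued function equals $\{|\rho_j-\rho_j(\eta_0)|<\varepsilon\}$ for a point $\eta_0$ with $\rho_j(\eta_0)=1$. One small point to record in your inductive step: if some $\overline{W_j}$ happens to be empty you should still take $\varepsilon_{j,k+1}>0$, so that $L_j^{k+1}$ contains $\eta_{0,j}$ and remains a legitimate nonempty element of $\mathfrak Q$; this costs nothing.
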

\begin{proof}[Proof of Lemma \ref{coverlem}]
Since $\hat{G}_{\mathfrak a}$ is compact, there exists a finite refinement 
$\mathcal L'=\{L'_j\}$ of $\mathcal L=\{L_i\}$ such that every $L'_j \Subset L_{i}$ for some $i=i(j)$, and functions $\{\rho_j\} \subset C(\hat{G}_{\mathfrak a})$ such that $\rho_j \equiv 1$ on $\bar{L}'_j$, $\rho_j \equiv 0$ on $\hat{G}_{\mathfrak a} \setminus L_{i}$. We set $L_j^k:=\{\eta \in \hat{G}_{\mathfrak a}: \rho_j(\eta)>1-\frac{1}{2k}\}$, $k \geq 1$. By definition, $L_j^k \in \mathfrak Q$ for all $j$, $k$ (see~(\ref{base1})). It follows that $\mathcal L^k:=\{L_j^k\}$ are the required refinements of $\mathcal L$. 
\end{proof}


\begin{lemma}
\label{exhlem}
Let $K \in \mathfrak Q$, $U_0 \subset X_0$ be open. We set $U:=U_0 \times K$. 
The following is true:

\begin{enumerate}
\item[(1)] There exist open subsets $N_k \in \mathfrak Q$, $1 \leq k<\infty$, such that $N_{k} \Subset N_{k+1} \Subset K$ for all $k$ and $K=\cup_k N_k$.

\medskip

\item[(2)] 
There are open subsets $V_k=V_{0,k} \times N_k$, $1 \leq k<\infty$, such that $V_{k} \Subset V_{k+1} \Subset U$ for all $k$ and $U=\cup_k V_k$. Here $V_{0,k} \Subset U_0$ is open and $N_k \in \mathfrak Q$ for all $k$.

\medskip

\item[(3)] Let $L \in \mathfrak Q$ be such that $L \Subset K$. There exists a collection of sets $L^m \in \mathfrak Q$, $m \geq 1$, such that $L \Subset \dots \Subset L^{m+1} \Subset L^m \Subset \dots \Subset L^1 \Subset K$ for all $m$.

\medskip

\item[(4)] Let  $N \Subset K$ and $\{L_i\}$ be a finite collection of open subsets of $K$ such that $N \Subset \cup_i L_i$. There exists a finite number of open subsets $L_j' \subset K$, $L_j' \in \mathfrak Q$, such that $N \Subset \cup_j L_j'$ and for each $j$ we have $L_j' \Subset L_i$ for some $i=i(j)$.
\end{enumerate}

\end{lemma}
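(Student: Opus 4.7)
The plan rests on two observations about the basis $\mathfrak Q$. First, $\mathfrak Q$ is trivially closed under shrinking the defining $\varepsilon$ in its presentation. Second, and more useful, every superlevel set $\{\rho > c\}$ with $\rho \in C(\hat G_{\mathfrak a})$ and $c < M := \max \rho$ lies in $\mathfrak Q$: by compactness of $\hat G_{\mathfrak a}$ one may pick $\eta_0$ realizing $\rho(\eta_0) = M$, and then with $h_1 := \rho$ and $\varepsilon := M - c$ the inequality $|h_1(\eta) - h_1(\eta_0)| < \varepsilon$ simplifies to $\rho(\eta) > c$ because $\rho \le M$ pointwise.

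For part (1), represent $K \in \mathfrak Q$ by its defining data $K = \{\eta : \max_i |h_i(\eta) - h_i(\eta_0)| < \varepsilon\}$ and set
\[
N_k := \{\eta : \max_i |h_i(\eta) - h_i(\eta_0)| < \varepsilon(1 - 2^{-k})\}, \quad k \ge 1;
\]
these manifestly lie in $\mathfrak Q$, exhaust $K$, and satisfy $\bar N_k \subset N_{k+1}$. For part (2), combine this with a standard exhaustion $V_{0,k} \Subset V_{0,k+1} \Subset U_0$ by relatively compact open subsets of the open subset $U_0$ of the paracompact manifold $X_0$, and put $V_k := V_{0,k} \times N_k$; the inclusions $V_k \Subset V_{k+1} \Subset U$ and $U = \cup_k V_k$ follow factor by factor.

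For part (3), apply Urysohn's lemma on the compact Hausdorff (hence normal) space $\hat G_{\mathfrak a}$ to the disjoint closed sets $\bar L$ and $\hat G_{\mathfrak a} \setminus K$, producing $\rho \in C(\hat G_{\mathfrak a})$ valued in $[0,1]$ with $\rho \equiv 1$ on $\bar L$ and $\rho \equiv 0$ off $K$. Put $L^m := \{\rho > 1 - 1/(m+1)\}$. The key observation above gives $L^m \in \mathfrak Q$ (note $\max \rho = 1$), and one checks directly that $\bar L \subset \{\rho = 1\} \subset L^m$ for every $m$, $\bar L^{m+1} \subset \{\rho \ge 1 - 1/(m+2)\} \subset L^m$, and $\bar L^1 \subset \{\rho \ge 1/2\} \subset K$ (the last inclusion because $\rho$ vanishes off $K$).

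For part (4), for each $\eta \in \bar N$ choose $L_{i(\eta)} \ni \eta$ from the given finite cover; by normality of $\hat G_{\mathfrak a}$ pick an open $W_\eta$ with $\eta \in W_\eta \subset \bar W_\eta \subset L_{i(\eta)}$, and since $\mathfrak Q$ is a basis of the topology select $L'_\eta \in \mathfrak Q$ with $\eta \in L'_\eta \subset W_\eta$, so that $\bar L'_\eta \subset L_{i(\eta)}$. Compactness of $\bar N$ extracts a finite subcover, which is the required family. The only substantive obstacle in the whole lemma is ensuring that the refined neighbourhoods stay inside the basis $\mathfrak Q$ rather than being merely open; this is exactly what the superlevel-set observation (combined with Urysohn) is designed to handle in part (3), while parts (1), (2), (4) only require the basis property and standard paracompactness or compactness arguments.
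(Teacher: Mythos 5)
Your proof is correct, but in parts (1), (3) and (4) it takes a genuinely different and more elementary route than the paper. For (1) the paper does not shrink the defining $\varepsilon$ directly; instead it passes to the finitely generated subalgebra $\mathfrak a'\subset C(\hat G_{\mathfrak a})$ generated by $h_1,\dots,h_m,\bar h_1,\dots,\bar h_m$, realizes $K$ as $\pi^{-1}(K')$ for the proper surjection $\pi:\hat G_{\mathfrak a}\to M_{\mathfrak a'}\subset\mathbb C^p$, builds the exhaustion in the compact \emph{metric} space $M_{\mathfrak a'}$, and pulls it back; the same device is invoked for (3). Your direct substitution $\varepsilon\mapsto\varepsilon(1-2^{-k})$ in the defining inequalities achieves (1) in one line, and your observation that superlevel sets $\{\rho>c\}$, $c<\max\rho$, lie in $\mathfrak Q$ (take $h_1:=\rho$, $\eta_0$ a maximizer, $\varepsilon:=\max\rho-c$) combined with a Urysohn function separating $\bar L$ from $\hat G_{\mathfrak a}\setminus K$ handles (3) cleanly; I checked the chain of inclusions $\bar L\subset\{\rho=1\}\subset L^{m}$, $\overline{L^{m+1}}\subset\{\rho\ge 1-\tfrac1{m+2}\}\subset L^m$ and $\overline{L^1}\subset\{\rho\ge\tfrac12\}\subset K$, and all are valid. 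For (4) the paper routes through its Lemma on refinements of covers of $\hat G_{\mathfrak a}$ (padding the cover with $\hat G_{\mathfrak a}\setminus\bar N$ and discarding the superfluous pieces), whereas you argue pointwise on $\bar N$ using regularity of the compact Hausdorff space plus the basis property of $\mathfrak Q$ and extract a finite subcover; both work, and yours avoids the auxiliary lemma. What the paper's metric-space reduction buys is a single uniform mechanism reused in (1) and (3); what your approach buys is self-containedness and the reusable fact that Urysohn superlevel sets already belong to $\mathfrak Q$, which is arguably the cleanest way to see why the basis is rich enough for these constructions.
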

\begin{proof}
(1) Recall that the basis $\mathfrak Q$ of topology of $\hat{G}_{\mathfrak a}$ consists of sublevel sets of functions in $C(\hat{G}_{\mathfrak a})$, see~(\ref{base1}), so
$K=\{\eta \in \hat{G}_{\mathfrak a}: \max_{1 \leq i \leq m}|h_i(\eta))-h_i(\eta_0)|<\varepsilon\}$
for some $\eta_0 \in \hat{G}_{\mathfrak a}$, $ h_1,\dots,h_m \in C(\hat{G}_{\mathfrak a})$ and $\varepsilon>0$.
Let $\mathfrak a'$ be the subalgebra of $C(\hat{G}_{\mathfrak a})$ generated by functions $h_1,\dots,h_m,\bar{h}_1,\dots,\bar{h}_m$. Since algebra $\mathfrak a'$ is finitely generated, the maximal ideal space $M_{\mathfrak a'}$ of $\mathfrak a'$ is a compact subset of some $\mathbb C^p$, and we have $\mathfrak a' \cong C(M_{\mathfrak a'})$. The map $\pi:\hat{G}_{\mathfrak a} \rightarrow M_{\mathfrak a'}$ adjoint to the inclusion $\mathfrak a' \subset C(\hat{G}_{\mathfrak a})$ is proper and surjective. By definition, there exists an open subset $K' \subset M_{\mathfrak a'}$ such that $K=\pi^{-1}(K')$.
Since $M_{\mathfrak a'}$ is a compact metric space (as a compact subset of $\mathbb C^p$), there exist open subsets $N_k' \subset M_{\mathfrak a'}$ such that $N_{k-1}' \Subset N_k' \Subset K'$ for all $k$ and $K'=\cup_k N'_k$. We define $N_k:=\pi^{-1}(N_k) \in \mathfrak Q$. Clearly, each set $N_k'$ can be chosen in the form $N_k'=\left\{y \in M_{\mathfrak a'}: \max_{1 \leq i \leq r_k}|f_{ik}(y)-f_{ik}(y_0)|<\varepsilon\right\}$ for some $y_0 \in M_{\mathfrak a'}$, $f_{ik} \in C(M_{\mathfrak a'})$ and $\varepsilon>0$. Since $\pi^*C(M_{\mathfrak a'}) \subset C(\hat{G}_{\mathfrak a})$,  $N_k \in \mathfrak Q$ (see~(\ref{base1})) as required. 

A similar argument yields (3).

(2) It is clear that there exists a sequence of open sets $V_{0,k}$ such that $V_{0,k} \Subset V_{0,k+1} \Subset U_0$ for all $k$ and $U_0=\cup_k V_{0,k}$. 
We set $V_k:=V_{0,k} \times N_k$. 

(4) We apply Lemma \ref{coverlem} to the finite open cover of $\hat{G}_{\mathfrak a}$ consisting of the sets $L_i$ and set $\hat{G}_{\mathfrak a} \setminus \bar{N}$ to obtain a finite refinement $\{L'_j\} \subset \mathfrak Q$ of this cover. We exclude subsets $L'_j$ such that $L_j' \Subset \hat{G}_{\mathfrak a} \setminus \bar{N}$. Then for the obtained family $\bar{N} \subset \cup_j L_j'$ and by the definition of the refinement for each $j$ we have $L_j' \Subset L_i$ for some $i$, as required.
\end{proof}

\subsection{Proof of Proposition \ref{vanprop}}
\label{vanproof}
The (rather technical) proof of this proposition is presented at the end of this subsection. In the proof we will use the following preliminary results.

Let $U_0 \Subset \mathbb C^n$ be an open polydisk,
$K \in \mathfrak Q$ (see~(\ref{base1})). 
We set 
\begin{equation}
\label{u}
U:=U_0 \times K. 
\end{equation}
The sets $U$ and $\hat{\Pi}(U_0,K) \subset c_{\mathfrak a}X$ are biholomorphic (see~subsection \ref{charts}).
Definitions of a analytic homomorphism  and a free resolution (of an analytic sheaf over an open subset of $c_{\mathfrak a}X$, see~subsection \ref{cohsect}) are transferred naturally to analytic sheaves over $U$.
Thus, it suffices to prove Proposition \ref{vanprop} in the assumption that analytic sheaf $\mathcal R$ and free resolution (\ref{frseq}) are given over $U$.

A function $f \in C(U)$ is said to be $C^\infty$ if all its derivatives with respect to variable $x \in U_0$ (in some local coordinates on $U_0$) are in $C(U)$.
The algebra of $C^\infty$ functions on $U$ will be denoted by $C^\infty(U)$.

Let $\Lambda^{p,q}(U_0)$ be the collection of all $C^\infty$ $(p,q)$-forms on $U_0$. We define the space $\Lambda^{p,q}(U)$ of $C^\infty$ $(p,q)$-forms on $U$ by the formula
$\Lambda^{p,q}(U):=C^\infty(U) \otimes \Lambda^{p,q}(U_0)$. 
We have an operator $\bar{\partial}:\Lambda^{p,q}(U) \rightarrow \Lambda^{p,q+1}(U)$ defined as follows: 

Suppose that $\omega \in \Lambda^{p,q}(U)$ is given
 (in local coordinates on $U_0$) by the formula
\begin{equation*}
\omega=\sum_{|I|=p}\sum_{|J|=q}  f_{IJ} dz_I \wedge d\bar{z}_{J}, \quad f_{IJ} \in C^\infty(U), 
\end{equation*}
where $I=(i_1,\dots,i_p)$, $J=(j_1,\dots,j_q)$, $dz_I=dz_{i_1} \wedge \dots \wedge dz_{i_p}$, $d\bar{z}_J=d\bar{z}_{j_1} \wedge \dots \wedge d\bar{z}_{j_q}$;
then
\begin{equation}
\label{dbardef}
\bar{\partial} f:=\sum_{|I|=p}\sum_{|J|=q}  \bar{\partial} f_{IJ} \wedge dz_I \wedge d\bar{z}_J,
\end{equation} 
where 
\begin{equation*}
\bar{\partial} f_{IJ}(z,\eta):=\sum_{j=1}^n \frac{\partial f_{IJ}(z,\xi)}{\partial \bar{z}_j}d\bar{z}_j, \quad z=(z_1,\dots,z_n), \quad (z,\xi) \in U=U_0 \times K.
\end{equation*}

A form $\omega \in \Lambda^{p,q}(U)$ is called $\bar{\partial}$-closed if $\bar{\partial} \omega=0$.

Let $\Lambda^{p,q}$ be the sheaf of germs of $C^\infty$ $(p,q)$-forms on $U$, and $Z^{p,q} \subset \Lambda^{p,q}$ be the subsheaf of germs of $\bar{\partial}$-closed $(p,q)$-forms.
Note that $Z^{0,0}=\mathcal O$. 

\medskip

In what follows {\em we fix an open polydisk}
\begin{equation}
\label{v}
V_0 \Subset U_0.
\end{equation}
Let $W_0 \subset \bar{V}_0$ be open in $\bar{V}_0$ and such that $W_0=\bar{V}_0 \cap \tilde{W}_0$ for some product domain $\tilde{W}_0=\tilde{W}_{0}^1 \times \dots \times \tilde{W}_0^n \Subset U_0$, where each $\tilde{W}_0^i \Subset \mathbb C$ ($1 \leq i \leq n$) is simply connected and has smooth boundary (clearly, given any open neighbourhood of $\bar{W}_0$ in $U_0$, we can find such a set $\tilde{W}_0$  contained in this neighbourhood).

Fix a subset $W_0' \Subset W_0$ open in $\bar{V}_0$ and satisfying the same intersection condition as $W_0$.
Let 
\begin{equation}
\label{s}
S \subset K \text{ be a closed subset, and let } L' \Subset L \subset S \text{ be open in } S.
\end{equation}

\begin{lemma}
\label{dbarlem0}
For every $\omega \in \Gamma(W_0 \times L,Z^{0,q})$ there exists $\eta \in \Gamma(\bar{W}_0' \times \bar{L}',\Lambda^{0,q-1})$ such that $\bar{\partial} \eta=\omega$.
\end{lemma}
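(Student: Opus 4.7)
The plan is to solve the $\bar{\partial}$-equation parametrically in $\xi \in L$ by reducing it to the Banach-valued $\bar{\partial}$-problem provided by Lemma \ref{hl1}. Since $\bar{\partial}$ in the definition \eqref{dbardef} acts only on the holomorphic variable $z \in U_0$, the form $\omega$ may be regarded as a family (parametrized by $\xi \in L$) of $\bar{\partial}$-closed $(0,q)$-forms on $W_0$. The idea is to apply the operator $R_{D_0,B}$ of Lemma \ref{hl1} with $B:=C(\bar{L}')$ to obtain a solution that is continuous in $\xi$, then to observe that the integral character of $R_{D_0,B}$ preserves $C^\infty$-regularity in $z$.

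First I would choose an auxiliary strictly pseudoconvex domain $D_0$ with $\bar{W}_0' \Subset D_0 \Subset W_0$. Because $W_0=\bar{V}_0\cap\tilde W_0$ with $\tilde W_0$ a product of smoothly bounded simply connected domains in $\mathbb C$, and $W_0'$ has the same product form, one can take $D_0$ to be a slightly shrunken product domain containing $\bar{W}_0'$ (after choosing $\tilde W_0$ close enough to $W_0$ within $U_0$). Restricting $\omega$ to $D_0\times\bar{L}'$, I regard the coefficients $f_J(z,\xi)$, which are $C^\infty$ in $z$ and continuous in $\xi$ together with all $z$-derivatives, as mapping $z\mapsto f_J(z,\cdot)$ into the Banach space $B:=C(\bar{L}')$ with sup-norm. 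This produces a bounded continuous $B$-valued $(0,q)$-form $\tilde\omega\in\Lambda_b^{(0,q)}(D_0,B)$ which is $C^\infty$ (as a $B$-valued form) and $\bar{\partial}$-closed on $D_0$.

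Next I would apply Lemma \ref{hl1} to obtain $\tilde\eta:=R_{D_0,B}\tilde\omega \in \Lambda_b^{(0,q-1)}(D_0,B)$ with $\bar{\partial}\tilde\eta=\tilde\omega$ on $D_0$. Rewriting the $B$-valued form $\tilde\eta$ in terms of its scalar coefficients gives a $(0,q-1)$-form $\eta$ on $D_0\times\bar{L}'$ whose coefficients are continuous in $(z,\xi)$ and satisfy $\bar{\partial}\eta=\omega$ pointwise in $\xi$. Restricting to $\bar{W}_0'\times\bar{L}'$ then yields the required identity.

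The main obstacle is verifying the smoothness of $\eta$ in $z$, since $\Gamma(\bar{W}_0'\times\bar{L}',\Lambda^{0,q-1})$ demands that $\eta$ be $C^\infty$ in $z$ on $\bar{W}_0'$ with all derivatives continuous in $(z,\xi)$. This follows from the structure of the Henkin--Leiterer integral operator $R_{D_0,B}$: it is given by integration of $\tilde\omega$ against Cauchy--Fantappi\'e kernels that are $C^\infty$ in the parameter $z$ away from the diagonal, and the standard integration-by-parts/elliptic-regularity argument (identical to the scalar case treated in \cite{HL}, since all kernels are scalar) shows that $R_{D_0,B}$ sends $C^\infty$ $B$-valued forms on $D_0$ to $C^\infty$ $B$-valued forms on any relatively compact subdomain, in particular on a neighbourhood of $\bar{W}_0'$. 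The $z$-derivatives of $\eta$ inherit continuity in $\xi\in\bar{L}'$ from the $B$-valued smoothness of $\tilde\eta$. Hence $\eta\in\Gamma(\bar{W}_0'\times\bar{L}',\Lambda^{0,q-1})$ and $\bar\partial\eta=\omega$, as required.
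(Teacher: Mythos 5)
Your proof follows essentially the same route as the paper's: both regard $\omega$ as a Banach-valued $\bar{\partial}$-closed form on a neighbourhood of $\bar{W}_0'$ (you take $B=C(\bar{L}')$, the paper takes $B=C(\bar{L}'')$ for an intermediate $L'\Subset L''\Subset L$, which changes nothing) and solve via the operator of Lemma \ref{hl1}, using that a section over $W_0\times L$ extends to an honest open neighbourhood. One point to correct: Lemma \ref{hl1} requires $D_0$ to be \emph{strictly} pseudoconvex, and a ``slightly shrunken product domain'' is not strictly pseudoconvex (its boundary is neither smooth nor strictly Levi-convex at the corners); the paper's fix is to observe that the product domain $\tilde{W}_0$ is pseudoconvex, hence exhausted by strictly pseudoconvex subdomains, and to choose a strictly pseudoconvex $D_0$ with $\hat{W}_0\Subset D_0\Subset\tilde{W}_0$ for an auxiliary product domain $\hat{W}_0\supset W_0'$. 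Your explicit verification that $R_{D_0,B}\tilde{\omega}$ is $C^\infty$ in $z$ on relatively compact subdomains is a regularity point the paper passes over in silence, and it is worth making.
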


\begin{proof}
By definition, a section of sheaf $Z^{0,q}$ over $W_0 \times L$ is the restriction of a section of $Z^{0,q}$ over some open neighbourhood of $W_0 \times L$.
Therefore, we may assume that $L$ is open in $K$, and $\omega \in \Gamma(\tilde{W}_0 \times L,Z^{0,q})$ for some product domain $\tilde{W}_0$ as above. 

Clearly, there exists a product domain $\hat{W}_0 \Subset \tilde{W}_0$ open in $U_0$, where $\hat{W}_0=\hat{W}_{0}^1 \times \dots \times \hat{W}_0^n$ and each domain $\hat{W}_0^i \Subset \tilde{W}_0^i$ has smooth boundary, such that $W_0' \Subset \hat{W}_0$.
Further, since $\hat{G}_{\mathfrak a}$ is a normal space, there exists an open set $L'' \Subset L$ such that $L' \Subset L''$.

Let $C(\bar{L}'')$ be the Banach space of continuous functions on $\bar{L}''$ endowed with $\sup$-norm, $\Lambda^{0,q}(\tilde{W}_0,C(\bar{L}''))$ be the space of $C^\infty$ $C(\bar{L}'')$--valued $(0,q)$-forms on $\tilde{W}_0$, and
$$Z^{0,q}(\hat{W}_0,C(\bar{L}'')) \subset \Lambda^{0,q}(\hat{W}_0,C(\bar{L}''))$$ be the subspace of $\bar{\partial}_{C(\bar{L}'')}$-closed forms on $\hat{W}_0$. Here $$\bar{\partial}_{C(\bar{L}'')}:\Lambda^{0,q}(\hat{W}_0,C(\bar{L}'')) \rightarrow Z^{0,q+1}(\hat{W}_0,C(\bar{L}''))$$ is the usual operator of differentiation of $C(\bar{L}'')$-valued forms.

It is easy to see that the restriction to $\tilde{W}_0 \times \bar{L}''$ of a form in $\Gamma(\tilde{W}_0 \times L,\Lambda^{0,q})$ can be naturally identified with a form 
in $\Lambda^{0,q}(\tilde{W}_0,C(\bar{L}''))$ and, since $\hat{W}_0 \times L''$ is a neighbourhood of $\bar{W}_0' \times \bar{L}'$, every form in $\Lambda^{0,q}(\tilde{W}_0,C(\bar{L}''))$ determines (under such identification) a unique form in $\Gamma(\bar{W}_0' \times \bar{L}',\Lambda^{0,q})$; these identification maps commute with the actions of operators $\bar{\partial}$ and $\bar{\partial}_{C(\bar{L}'')}$.
In particular, form $\omega$ determines a form $\hat{\omega} \in Z^{0,q}(\tilde{W}_0,C(\bar{L}''))$.
Note that since $\tilde{W}_0 \Subset \mathbb C^n$ is a product domain, it is pseudoconvex. Hence $W_0$ admits an exhaustion by strictly pseudoconvex subdomains (see, e.g., \cite{Kra}).
Therefore, there exists a strictly pseudoconvex domain $D_0 \Subset \tilde{W}_0$ such that $\hat{W}_0 \Subset D_0$.
We restrict form $\hat{\omega}$ to $D_0$ (clearly, $\hat{\omega}|_{D_0}$ is bounded) and apply Lemma \ref{hl1}, where we take $B:=C(\bar{L}'')$.
We obtain that
there exists a form $\hat{\eta} \in \Lambda^{0,q-1}(\hat{W}_0,C(\bar{L}''))$ such that $\bar{\partial}_{C(\bar{L}'')} \hat{\eta}=\hat{\omega}$ over $\hat{W}_0$. It follows that the form $\eta \in \Gamma(\bar{W}_0' \times \bar{L}',\Lambda^{0,q-1})$ determined by 
$\hat{\eta}$ is the required one.
\end{proof}

%

\begin{defin}
\label{refdef}
We say that a finite open cover $\mathcal U=\{U_\alpha\}$ of $\bar{V}_0 \times S$ (see~(\ref{v}) and (\ref{s})) is \textit{of class} ($P$) if the following conditions are satisfied:

(1) $U_\alpha=U_{0,l} \times L_j$, $\alpha=(l,j)$, where $\{U_{0,l}\}$ and $\{L_j\}$ are finite open covers of $\bar{V}_0$ and $S$, respectively; 

(2) Each $L_j=S \cap \tilde{L}_j$ for some $\tilde{L}_j \in \mathfrak Q$ such that $\tilde{L}_j \subset K$;

(3) Each $U_{0,l}=\bar{V}_0 \cap \tilde{U}_{0,l}$ for some product domain $\tilde{U}_{0,l}=\tilde{U}_{0,l}^1 \times \dots \times \tilde{U}_{0,l}^n \Subset U_0$, where domains $\tilde{U}_{0,l}^i \Subset \mathbb C$ ($1 \leq i \leq n$) are simply connected and has smooth boundaries. 
\end{defin}

 \begin{lemma}
 \label{reflem}
 (1) Each open cover of $\bar{V}_0 \times S$ has a refinement of class ($P$).
 
 (2) Each open cover of $\bar{V}_0 \times S$ of class ($P$) has a refinement of class ($P$) of the same cardinality.
\end{lemma}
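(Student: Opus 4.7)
The plan is to base both parts on a single observation: the collection of sets $\tilde U_0 \times \tilde L$, where $\tilde U_0 \subset U_0$ is a product of simply connected subdomains of $\mathbb C$ with smooth boundary and $\tilde L \in \mathfrak Q$ is contained in $K$, forms a basis of the product topology on $U_0 \times K$ near $\bar V_0 \times S$. This follows because product domains with smooth simply connected factors generate the topology of $U_0$, while by definition $\mathfrak Q$ generates the topology of $\hat G_{\mathfrak a}$.

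For part (1), I would argue as follows. Given any open cover $\{W_\beta\}$ of the (compact) set $\bar V_0 \times S$, for each point $(x,\xi)$ pick $W_\beta \ni (x,\xi)$ and, by the observation above, a basis element $\tilde U_{0,(x,\xi)} \times \tilde L_{(x,\xi)} \subset W_\beta$ containing $(x,\xi)$. Compactness of $\bar V_0 \times S$ yields a finite subcover. Intersecting each member with $\bar V_0 \times S$, i.e., setting $U_{0,l} := \bar V_0 \cap \tilde U_{0,l}$ and $L_j := S \cap \tilde L_j$, produces a refinement of class $(P)$.

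For part (2), I would start with a cover $\{U_{0,l} \times L_j\}_{(l,j) \in \Lambda}$ of class $(P)$ and build the refinement factor by factor while retaining the index set $\Lambda$. On the $\hat G_{\mathfrak a}$ side, each defining set has the form $\tilde L_j = \{\eta : \max_i |h_{i,j}(\eta) - h_{i,j}(\eta_0^j)| < \varepsilon_j\}$; replacing $\varepsilon_j$ by $\varepsilon_j - \delta$ gives $\tilde L'_j(\delta) \Subset \tilde L_j$ still in $\mathfrak Q$. A standard compactness–continuity argument (take a limit of hypothetical omitted points in $S$ and apply continuity of the $h_{i,j}$'s) shows that for sufficiently small uniform $\delta > 0$ the sets $\{S \cap \tilde L'_j(\delta)\}$ still cover $S$. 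On the $U_0$ side, smoothly deform each factor $\tilde U^i_{0,l}$ inward (flowing its boundary along an inward normal vector field defined in a tubular neighborhood) to obtain a smaller simply connected subdomain $\tilde U^{i,t}_{0,l} \Subset \tilde U^i_{0,l}$ with smooth boundary, depending continuously on a parameter $t \downarrow 0$. Let $\tilde U_{0,l}(t) := \prod_i \tilde U^{i,t}_{0,l}$ and repeat the compactness argument on $\bar V_0$ to select $t$ so small that $\{\bar V_0 \cap \tilde U_{0,l}(t)\}_l$ still covers $\bar V_0$. The products $(\bar V_0 \cap \tilde U_{0,l}(t)) \times (S \cap \tilde L'_j(\delta))$, indexed by the same $\Lambda$, then constitute the required same-cardinality refinement of class $(P)$.

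The main technical nuisance is carrying out the inward smooth deformation of each $\tilde U^i_{0,l}$ while preserving both simple connectivity and smoothness of the boundary, and arranging that the shrinking parameters can be chosen uniformly in $l$ and $j$. Both points are routine but require a brief verification: simple connectivity and boundary smoothness are preserved under sufficiently small inward deformations via a tubular neighborhood of $\partial \tilde U^i_{0,l}$, and uniformity of the parameters follows from the finiteness of $\Lambda$ together with the compactness-continuity argument sketched above.
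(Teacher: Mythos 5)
Your strategy is essentially the paper's: exploit compactness of $\bar{V}_0\times S$ and the product structure to refine by sets $\bigl(\bar V_0\cap\tilde U_{0,l}\bigr)\times\bigl(S\cap\tilde L_j\bigr)$ with $\tilde L_j\in\mathfrak Q$, and then shrink index-by-index to get a same-cardinality refinement. The differences are in the mechanics: the paper funnels the $\hat G_{\mathfrak a}$-side through Lemma \ref{coverlem} and Lemma \ref{exhlem}(3),(4), where the shrunk sets are produced as superlevel sets of Urysohn functions and hence land in $\mathfrak Q$ and are compactly contained in the originals by construction; you instead shrink the defining inequality of each $\mathfrak Q$-set from $\varepsilon_j$ to $\varepsilon_j-\delta$, which is a perfectly legitimate (and arguably more direct) alternative for class-$(P)$ covers, whose members already come with explicit $\mathfrak Q$-data. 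Two points need tightening. First, the paper's notion of refinement requires $L_j'\Subset L_{i(j)}$, not merely $\subset$; your part (1) as written only arranges inclusion of a basis element inside a cover member, so you should insert one more shrinking step (e.g.\ via Lemma \ref{exhlem}(3) on the $\hat G_{\mathfrak a}$-factor and a trivially smaller polydisk/product domain on the $U_0$-factor) to secure compact containment. Second, your ``limit of hypothetical omitted points'' argument implicitly assumes sequential compactness, which $\hat G_{\mathfrak a}$ (and hence $S$) need not have; replace it by noting that $\eta\mapsto\max_j\bigl(\varepsilon_j-\max_i|h_{i,j}(\eta)-h_{i,j}(\eta_0^j)|\bigr)$ is continuous and positive on the compact set $S$, hence bounded below by some $\delta_0>0$, and take $\delta<\delta_0$ (or argue with nets). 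With these repairs, and granting the routine verification that a small inward normal flow of a smooth simply connected planar domain stays smooth and simply connected (equivalently, one may use $\phi(r\mathbb D)$ for a boundary-smooth Riemann map $\phi$ and $r$ close to $1$), your argument is correct.
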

\begin{proof}
(1) There exists a refinement of a given open cover of $\bar{V}_0 \times S$ by open sets of the form $U_{0,l} \times O_j$, where $\{U_{0,l}\}$ and $\{O_i\}$ are finite open covers of $\bar{V}_0$ and $S$, respectively. By the definition of the induced topology on $S$, there exist open sets $\tilde{O}_i \subset K$ such that $O_i=S \cap \tilde{O}_i$. Now, we apply Lemma \ref{exhlem}(4) to $\{\tilde{O}_i\}$ (there we take $\bar{N}:=S$) to obtain open sets $\{\tilde{L}_j\} $ such that $L_j \Subset L_i$ for some $i=i(j)$ and $\tilde{L}_j \in \mathfrak Q$ for all $j$. Finally, we set $L_j:=S \cap \tilde{L}_j$. The sets $U_{0,l} \times L_j$ form the required refinement of class ($P$).

(2) Follows from  assertions (3) and (4) of Lemma \ref{exhlem}.
\end{proof}

Let $\mathcal U=\{U_\alpha:=U_{0,l} \times L_j\}$ be a finite open cover of $\bar{V}_0 \times S$ of class ($P$), and $\mathcal U'=\{U'_\alpha:=U'_{0,l} \times L'_j\}$ be a refinement of $\mathcal U$ of class ($P$) of the same cardinality (see~Lemma \ref{reflem}(2)). By definition, $\{U_{0,l}' \}$, $\{L_j'\}$ are refinements of open covers $\{U_{0,l}\}$ and $\{L_j\}$, respectively.

We have an injective refinement map $\iota_{\mathcal U,\mathcal U'}:\mathcal Z^i(\mathcal U,\mathcal R) \rightarrow \mathcal Z^i(\mathcal U',\mathcal R)$ (see subsection \ref{proofnotation} for notation). If no confusion arises,
we write $\sigma$ for $\iota_{\mathcal U,\mathcal U'}(\sigma)$.

\begin{lemma}
\label{cechlem0} The following is true:

\begin{enumerate}
\item[(1)] Let $\sigma \in \mathcal Z^i(\mathcal U,\mathcal O)$, $i \geq 1$. Then $\sigma \in \mathcal B^i(\mathcal U',\mathcal O)$.

\vspace*{1mm}

\item[(2)] $H^i(\bar{V}_0 \times S,\mathcal O)=0$, $i \geq 1.$
\end{enumerate}
\end{lemma}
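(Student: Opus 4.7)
I will deduce this via the standard Čech--Dolbeault double complex construction, using Lemma \ref{dbarlem0} as the $\bar\partial$-Poincaré lemma on product-type intersections and a product partition of unity for the $\delta$-exactness of the fine sheaves $\Lambda^{0,q}$. Part (2) follows from (1) by the colimit definition of Čech cohomology together with Lemma \ref{reflem}(1), so I focus on (1).

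First, the partition of unity. On $\bar V_0$ a $C^\infty$ partition $\{\rho_{0,l}\}$ subordinate to an enlargement of $\{U_{0,l}\}$ exists by standard means. On the compact set $S\subset \hat G_{\mathfrak a}$, Urysohn's lemma applied in the normal space $\hat G_{\mathfrak a}$ yields a continuous partition $\{\rho_j\}$ subordinate to $\{\tilde L_j\}$. The products $\rho_{l,j}(z,\xi):=\rho_{0,l}(z)\rho_j(\xi)$ form a partition of unity subordinate to $\mathcal U$ which is smooth in $z$ and continuous in $\xi$---sufficient because $\bar\partial$ acts only on the $z$-variables, see \eqref{dbardef}. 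Second, using Lemma \ref{reflem}(2) and Lemma \ref{exhlem}(3), interpolate a nested chain of class ($P$) refinements of common cardinality
\[
\mathcal U \Supset \mathcal U^{(1)} \Supset \mathcal U^{(2)} \Supset \cdots \Supset \mathcal U^{(i)}
\]
with $\mathcal U^{(i)}$ refining $\mathcal U'$, providing room for $i$ applications of the $\bar\partial$-Poincaré lemma.

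Now run the staircase. Given $\sigma \in \mathcal Z^i(\mathcal U, \mathcal O)$, construct $\tau_k \in \mathcal C^{i-1-k}(\mathcal U^{(k)}, \Lambda^{0,k})$ inductively via the partition of unity, with $\delta\tau_0 = \sigma$ and $\delta\tau_k = \bar\partial\tau_{k-1}$; at each step $\bar\partial\tau_{k-1}$ is a $\delta$-cocycle because $\delta\bar\partial = -\bar\partial\delta$ combined with $\bar\partial\sigma = 0$ and $\bar\partial^2 = 0$. When $k=i-1$ the result is a $0$-cochain, and $\bar\partial\tau_{i-1}$ is $\delta$-closed, so it glues to a global section $\omega \in \Gamma(\bar V_0\times S, Z^{0,i})$ defined on a product neighbourhood of $\bar V_0\times S$ extracted from the open intersections by the tube lemma. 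Lemma \ref{dbarlem0} yields $\mu \in \Gamma(\bar V_0 \times S, \Lambda^{0,i-1})$ with $\bar\partial\mu = \omega$ on the next shrinkage; the replacement $\tau_{i-1} \mapsto \tau_{i-1} - \mu$ turns $\tau_{i-1}$ into a $Z^{0,i-1}$-valued $0$-cochain without changing its $\delta$. I descend Dolbeault degree one step at a time, at each level applying Lemma \ref{dbarlem0} intersection-by-intersection (each intersection in a class ($P$) cover being of the product form required by the lemma) to shift the $\tau_k$-layer into the sheaf $Z^{0,k-1}$ at the cost of one refinement step. After $i$ descents the result is an $\eta \in \mathcal C^{i-1}(\mathcal U^{(i)},\mathcal O) \subset \mathcal C^{i-1}(\mathcal U',\mathcal O)$ with $\delta\eta = \sigma$.

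The main obstacle is bookkeeping the $i$ successive shrinkage steps: each invocation of Lemma \ref{dbarlem0} on an intersection strictly shrinks its domain, so the nested chain $\mathcal U \Supset \mathcal U^{(1)} \Supset \cdots \Supset \mathcal U^{(i)}$ must be arranged to end up inside the given $\mathcal U'$. The availability of cardinality-preserving refinements of class ($P$) from Lemma \ref{reflem}(2) and of strict interior shrinkages of basic sets from Lemma \ref{exhlem}(3) is exactly what makes this chain constructible, and the product structure of class ($P$) intersections is what keeps Lemma \ref{dbarlem0} applicable at every step of the descent.
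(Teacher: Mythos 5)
Your proposal is correct and is essentially the paper's own argument: the paper proves the strengthened claim that every $\sigma\in\mathcal Z^i(\mathcal U,Z^{0,q})$ becomes a coboundary on a class-($P$) refinement by induction on $i$, and unwinding that induction yields exactly your \v{C}ech--Dolbeault staircase, built on the same two ingredients (the mixed smooth-in-$z$/continuous-in-$\xi$ product partition of unity and Lemma \ref{dbarlem0}) with nested class-($P$) refinements absorbing the shrinkages. The one point to adjust in your write-up is the direction of the final refinement: you want $\mathcal U'$ to refine (or coincide with) the last cover $\mathcal U^{(i)}$ of your chain, not the other way around, so that the resulting coboundary lives on the given $\mathcal U'$; Lemma \ref{reflem}(2) is invoked in the paper precisely to interpolate such covers between $\mathcal U$ and $\mathcal U'$.
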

\begin{proof}
(1) We will prove a more general result: if $\sigma \in \mathcal Z^i(\mathcal U, Z^{0,q})$, $i \geq 1$, $q \geq 0$, then  $\sigma \in \mathcal B^i(\mathcal U',Z^{0,q})$. In particular, taking $q=0$ we obtain assertion (1).

Let $i=1$,
$\sigma_1 \in \mathcal Z^1(\mathcal U,Z^{0,q})$. 
Since $\bar{V}_0 \times S$ is a paracompact space, there exist
partitions of unity $\{\lambda_l\}$ and $\{\rho_j\}$ subordinate to covers $\{U_{0,l}'\}$ and $\{L_j'\}$ ($C^\infty$ and continuous, respectively).
We define a $0$-cocycle $\sigma_0^\infty \in C^0(\mathcal U',\Lambda^{0,q})$ 
by the formula
\begin{equation}
\label{defFprime}
(\sigma^\infty_0)_\alpha(x,\xi):=\sum\limits_{\beta=(l,j)} \rho_j(\xi)\lambda_l(x) (\sigma_1)_{\beta,\alpha}(x,\xi), \quad (x,\xi) \in U'_\alpha \quad \text{ for all } \alpha.
\end{equation}
Since $(\sigma_1)_{\alpha,\beta}=(\delta \sigma_0^\infty)_{\alpha,\beta}=(\sigma_0^\infty)_\alpha-(\sigma_0^\infty)_\beta$ and $\bar{\partial} (\sigma_1)_{\alpha,\beta}=0$, the family $\{\bar{\partial} (\sigma_0^\infty)_\alpha\}$ determines $\omega\in\Gamma(\bar{V}_0 \times S,Z^{0,q+1})$,
$\omega|_{U_\alpha}:=\bar{\partial} (\sigma_0^\infty)_\alpha$. By Lemma \ref{dbarlem0} (with $W_0'=W_0=\bar{V}_0$ and $L'=L=S$) there exists $\eta \in \Gamma(\bar{V}_0 \times S,\Lambda^{0,q})$ such that $\bar{\partial}\eta=\omega$.
We
define a $0$-cochain $\sigma_0 \in \mathcal  C^0(\mathcal U',Z^{0,q})$ by the formula $(\sigma_0)_\alpha=(\sigma^\infty_0)_\alpha-\eta$. It follows that $\sigma_1=\delta \sigma_0$; therefore $\sigma_1 \in \mathcal B^1(\mathcal U',Z^{0,q})$.

Using Lemma \ref{reflem}(2) we may assume that there exists  a refinement $\mathcal U''=\{U''_\alpha:=U''_{0,l} \times L''_j\}$ of cover $\mathcal U$ of class ($P$) of the same cardinality as $\mathcal U$ such that $\mathcal U'$ is a refinement of $\mathcal U''$. 

Now, let $i>1$. Assume that we have shown for all $1 \leq l<i$, $q \geq 0$ that each $\sigma \in \mathcal Z^l(\mathcal U,Z^{0,q})$ belongs to $\mathcal B^l(\mathcal U'',Z^{0,q})$. For a given $\sigma_i \in \mathcal Z^i(\mathcal U,Z^{0,q})$
we define an $(i-1)$-cocycle $\sigma^\infty_{i-1} \in \mathcal C^{i-1}(\mathcal U'',\Lambda^{0,q})$ by the formula
\begin{equation*}
(\sigma^\infty_{i-1})_{\alpha_1,\dots,\alpha_i}(x,\xi):=\sum_{\beta=(l,j)} \rho_j(\xi)\lambda_l(x) (\sigma_i)_{\beta,\alpha_1,\dots,\alpha_i}(x,\xi), \quad (x,\xi) \in U''_{\alpha_1,\dots,\alpha_i}
\end{equation*}
for all $\alpha_1,\dots,\alpha_i$, where $U''_{\alpha_1,\dots,\alpha_i}:=\cap_{r=1}^i U''_{\alpha_r} \neq \varnothing$.

We have $\delta (\sigma_{i-1}^\infty)=\sigma_i$, so $\bar{\partial} \delta( \sigma_{i-1}^\infty)=\delta( \bar{\partial} \sigma_{i-1}^\infty)=0$. Define $\mu_{i-1}:=\bar{\partial} \sigma_{i-1}^\infty \in \mathcal C^{i-1}(\mathcal U'',Z^{0,q+1})$. Since $\delta( \mu_{i-1})=\bar{\partial} \mu_{i-1}=0$, by the induction assumption there exists an $(i-2)$-cochain $\mu_{i-2} \in \mathcal C^{i-2}(\mathcal U'',Z^{0,q})$ such that $\delta( \mu_{i-2})=\mu_{i-1}$ and $\bar{\partial} \mu_{i-2}=0$. 
Now, by Lemma \ref{dbarlem0}(1) there exists an $(i-2)$-cochain $\eta_{i-2} \in \mathcal C^{i-2}(\mathcal U',\Lambda^{0,q})$ such that $\bar{\partial} \eta_{i-2}=\mu_{i-2}$.
We define $\sigma_{i-1}:=\sigma_{i-1}^\infty-\delta(\eta_{i-2})$. Then $\delta( \sigma_{i-1})=\sigma_i$; so $\sigma_i \in \mathcal B^i(\mathcal U',Z^{0,q})$, as required. 

\medskip

(2) By Lemma \ref{reflem}(1) any open cover of $\bar{V}_0 \times S$ has a finite refinement of class ($P$), hence the required result follows from (1).
\end{proof}

Let $\{V_k\}_{k=1}^\infty$ be the exhaustion of $U$ by open sets obtained in Lemma \ref{exhlem}(2). By definition, each $V_k$ has the form $V_k=V_{0,k} \times N_k$, where $V_{0,k} \Subset U_0$, $N_k \Subset K$ are open, and $N_k \in \mathfrak Q$ for all $k$. Since $U_0$ is an open polydisk in $\mathbb C^n$, we may choose each $V_{0,k}$ to be an a open polydisk as well.

\begin{defin}[see~{\cite[Ch.IV]{GrRe}}]
\label{grdef}
We say that an analytic sheaf $\mathcal R$ on $U$ satisfies the \textit{Runge condition} if the following holds for every $k \geq 1$:

(a) The space of sections $\Gamma(\bar{V}_k,\mathcal R)$ is endowed with a semi-norm $|\cdot|_k$ such that $\Gamma(U,\mathcal R)|_{\bar{V}_k}$ is dense in  $\Gamma(\bar{V}_k,\mathcal R)$.

(b) There exist constants $M_k>0$ such that for every $f \in \Gamma(\bar{V}_{k+1},\mathcal R)$ we have $|f|_{\bar{V}_k}|_k \leq M_k |f|_{k+1}$.

(c) If $\{f_j\}$ is a Cauchy sequence in $\Gamma(\bar{V}_{k+1},\mathcal R)$, then  $\{f_j|_{\bar{V}_{k}}\}$ has a limit in $\Gamma(\bar{V}_{k},\mathcal R)$.

(d) If $f \in \Gamma(\bar{V}_{k+1},\mathcal R)$ and $|f|_{k+1}=0$, then $f|_{\bar{V}_{k}}=0$.
\end{defin}

\begin{lemma}[see~{\cite[Ch.IV]{GrRe}} for the proof]
\label{grrunge}Let $\mathcal R$ be an analytic sheaf on $U$. The following is true:
\begin{itemize} 
\item[(1)]
Suppose that $H^i(\bar{V}_k,\mathcal R)=0$ for all $i \geq 1$, $k \geq 1$. Then $H^i(U,\mathcal R)=0$ for all $i \geq 2$.

\item[(2)]If $\mathcal R$ satisfies the Runge condition and $H^1(\bar{V}_k,\mathcal R)=0$ for all $k \geq 1$, then $H^1(U,\mathcal R)=0$.
\end{itemize}
\end{lemma}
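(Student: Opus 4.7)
The plan is to follow the classical exhaustion argument of \cite[Ch.~IV]{GrRe}, adapted to our setting by means of the exhaustion $U = \bigcup_k \bar V_k$ of Lemma \ref{exhlem}(2). In both parts I would represent a cohomology class in $H^i(U,\mathcal R)$ by a \v{C}ech cocycle $\sigma \in \mathcal Z^i(\mathcal U,\mathcal R)$ on a suitable open cover $\mathcal U$, and produce a primitive by patching together local primitives $\tau_k$ on $\bar V_k$, whose existence is guaranteed by the vanishing hypothesis on $H^i(\bar V_k,\mathcal R)$.

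For part (1), fix $i \ge 2$. Given any $\tau_k$ with $\delta \tau_k = \sigma|_{\bar V_k}$, pick some $\tau' \in \mathcal C^{i-1}(\mathcal U|_{\bar V_{k+1}},\mathcal R)$ with $\delta \tau' = \sigma|_{\bar V_{k+1}}$, which exists because $H^i(\bar V_{k+1},\mathcal R) = 0$. The difference $\tau' - \tau_k$ is an $(i-1)$-cocycle on $\bar V_k$; by the assumption $H^{i-1}(\bar V_k,\mathcal R) = 0$ it has the form $\delta \nu$ for some $\nu \in \mathcal C^{i-2}(\mathcal U|_{\bar V_k},\mathcal R)$. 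After refining $\mathcal U$ so that each member lies either inside $V_k$ or outside $\bar V_{k-1}$, the cochain $\nu$ can be extended by zero to a cochain $\tilde\nu$ on $\bar V_{k+1}$; set $\tau_{k+1} := \tau' - \delta \tilde\nu$. Then $\delta \tau_{k+1} = \sigma|_{\bar V_{k+1}}$ and $\tau_{k+1}|_{\bar V_{k-1}} = \tau_k|_{\bar V_{k-1}}$. Hence the $\tau_k$ stabilize on every $\bar V_m$, and their pointwise limit is a global cochain $\tau$ with $\delta \tau = \sigma$.

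For part (2), with $i = 1$, the cochain $\nu$ in the previous scheme would be a global section of $\mathcal R$ over $\bar V_k$, and the zero-extension trick is no longer available; instead one must use the Runge condition to approximate. Solve $\delta \tau_k = \sigma|_{\bar V_k}$ inductively; the difference $\tau_{k+1} - \tau_k$ is a $0$-cocycle and hence corresponds to a section $g_k \in \Gamma(\bar V_k,\mathcal R)$. By condition (a) of Definition \ref{grdef}, choose $h_k \in \Gamma(U,\mathcal R)$ with $|g_k - h_k|_{k-1} < 2^{-k}/M_1 \cdots M_{k-1}$, and replace $\tau_{k+1}$ by $\tau_{k+1} - h_k$. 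Conditions (b), (c), (d) then force the resulting sequence to be Cauchy on each $\bar V_m$ and to converge to a limit $\tau$ satisfying $\delta \tau = \sigma$ globally.

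The main technical obstacle I anticipate is the cover-refinement and zero-extension in part (1): the sheaf $\mathcal R$ need not admit any kind of partition of unity, so the extension of a \v{C}ech cochain from $\bar V_k$ to $\bar V_{k+1}$ has to be handled entirely at the combinatorial level of refined covers, carefully exploiting the freedom to shrink the opens of $\mathcal U$ so that each one is either comparable to $V_k$ or disjoint from $\bar V_{k-1}$. In part (2) the difficulty is purely bookkeeping: the summation of the corrections $h_k$ has to be arranged so that the constants $M_k$ do not spoil the Cauchy property on each $\bar V_m$, which is exactly what the four clauses of the Runge condition are designed to accomplish.
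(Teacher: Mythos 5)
Your proposal is correct and is essentially the argument the paper intends: the paper gives no proof of its own but defers to the classical exhaustion scheme of \cite[Ch.~IV]{GrRe}, which is exactly what you reproduce --- zero-extension of an $(i-2)$-cochain over a suitably refined cover to stabilize the primitives in degree $i\ge 2$, and Runge approximation of the $0$-cocycle corrections (using clauses (a)--(d) of Definition \ref{grdef}) to force convergence in degree $1$. The technical points you flag (working modulo refinement maps, and telescoping the constants $M_k$) are genuine but are handled in the standard way in the cited reference.
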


%

\begin{lemma}
\label{grlem}
The sheaf $\mathcal O|_U$ satisfies the Runge condition.
\end{lemma}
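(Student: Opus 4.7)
The plan is to take $|f|_k := \sup_{\bar{V}_k} |f|$ as the semi-norm and verify the four conditions (a)--(d) of Definition \ref{grdef}. Conditions (b) and (d) are immediate with $M_k=1$, since $\bar{V}_k \subset \bar{V}_{k+1}$ and this is a genuine norm. For (c), I would argue that a uniformly Cauchy sequence $\{f_j\}$ in $\Gamma(\bar{V}_{k+1}, \mathcal{O})$ converges uniformly to some $f \in C(\bar{V}_{k+1})$; restricting to the open set $V_{k+1}$ and pulling back via $\tilde{j}$, the fibrewise holomorphy in $z$ for each $h \in G$ survives by Weierstrass's theorem, so $f|_{V_{k+1}} \in \mathcal{O}(V_{k+1})$, and its restriction to $\bar{V}_k$ lies in $\Gamma(\bar{V}_k, \mathcal{O})$.

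The substance lies in condition (a), the density of $\Gamma(U,\mathcal{O})|_{\bar{V}_k}$ in $\Gamma(\bar{V}_k, \mathcal{O})$. Given $f \in \Gamma(\bar{V}_k, \mathcal{O})$, $f$ extends holomorphically to some open neighborhood of $\bar{V}_k = \bar{V}_{0,k} \times \bar{N}_k$, and I will shrink it to a cleaner one: an open polydisk $V_{0,k}'$ concentric with $V_{0,k}$ satisfying $\bar{V}_{0,k} \Subset V_{0,k}' \Subset U_0$, together with an open $M \in \mathfrak{Q}$ obtained via Lemma \ref{exhlem}(1) with $\bar{N}_k \Subset M \Subset K$, arranged so that $V_{0,k}' \times M$ lies inside the domain of holomorphy of $f$. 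Writing $z_0$ for the center of $V_{0,k}$, the fibrewise holomorphy and Cauchy integral formulas in $z$ yield a Taylor expansion
\[
f(z,\xi) = \sum_{\alpha} a_\alpha(\xi)(z-z_0)^\alpha, \qquad a_\alpha \in C(M),
\]
whose partial sums converge uniformly on $\bar{V}_{0,k} \times \bar{N}_k$ by standard polydisk estimates.

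Given $\varepsilon > 0$, I will pick $N$ so that $P_N(z,\xi) := \sum_{|\alpha| \leq N} a_\alpha(\xi)(z-z_0)^\alpha$ approximates $f$ to within $\varepsilon$ uniformly on $\bar{V}_k$. Since $\hat{G}_{\mathfrak{a}}$ is compact Hausdorff (hence normal), the Tietze extension theorem lets me extend each $a_\alpha|_{\bar{N}_k}$ to some $\tilde{a}_\alpha \in C(\hat{G}_{\mathfrak{a}})$, and I set $\tilde{P}_N(z,\xi) := \sum_{|\alpha| \leq N} \tilde{a}_\alpha(\xi)(z-z_0)^\alpha$. The function $\tilde{P}_N$ is continuous on $U_0 \times \hat{G}_{\mathfrak{a}}$; its pullback by $\tilde{j}$ is a polynomial in $z$ with bounded coefficients in the discrete variable $h \in G$, hence holomorphic on $U_0 \times G$. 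Thus $\tilde{P}_N|_U \in \Gamma(U, \mathcal{O})$, and since it agrees with $P_N$ on $\bar{V}_k$, it supplies the required approximation.

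The main obstacle I anticipate is the bookkeeping in (a): ensuring that the Taylor expansion in $z$ really converges uniformly on an open neighborhood of $\bar{V}_k$ of the product form $V_{0,k}' \times M$ with $M \in \mathfrak{Q}$, and that the Tietze-extended coefficients stay in $C(\hat{G}_{\mathfrak{a}})$, so that the approximating polynomials in $z$ genuinely lie in $\Gamma(U,\mathcal{O})$ rather than only in $\mathcal{O}$ of some neighborhood of $\bar{V}_k$.
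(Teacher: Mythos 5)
Your proof is correct, and you handle conditions (b)--(d) essentially as the paper does (which simply declares them trivial). For the key density condition (a), however, you take a genuinely different route. The paper extends $f$ from a neighbourhood $\bar V_{0,k}\times L$ of $\bar V_k$ to all of $V_{0,k}\times K$ by multiplying with a continuous cut-off $\rho_k\in C(K)$ equal to $1$ on $\bar N_k$ and $0$ off $L$ (this preserves holomorphy since $\rho_k$ depends only on the fibre variable), then regards the result as a holomorphic function near $\bar V_{0,k}$ with values in the Banach space $C_b(K)$ and invokes Bungart's Runge-type approximation theorem for Banach-valued holomorphic functions to produce the global approximant on $U_0$. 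You instead expand $f$ in a Taylor series in $z$ about the centre of the polydisk, truncate, and use Tietze--Urysohn to extend the finitely many continuous coefficients from $\bar N_k$ to all of $\hat G_{\mathfrak a}$; the truncated polynomial with extended coefficients is then automatically a section over all of $U=U_0\times K$. Your argument is more elementary and self-contained --- it exploits that $U_0$ is a polydisk, where Banach-valued Runge approximation reduces to convergence of the Taylor series --- whereas the cut-off-plus-Bungart argument would survive if $U_0$ were merely a Runge domain; your Taylor-coefficient device is in fact exactly the one the paper itself uses later in the proof of Lemma \ref{cartanlem0}. One small point worth making explicit: the Cauchy-integral representation of the coefficients $a_\alpha(\xi)$ and the convergence of the series to $f(z,\xi)$ require $f(\cdot,\xi)$ to be holomorphic for \emph{every} $\xi$ in the fibre, not only for $\xi\in j(G)$, which is all that the definition of $\mathcal O$ directly provides; this follows from the density of $j(G)$ in $\hat G_{\mathfrak a}$ together with the continuity of $f$ (a uniform-limit argument on compact polydisks), a step the paper also uses elsewhere without comment.
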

\begin{proof}
For a given section $f \in \Gamma(\bar{V}_{k},\mathcal O)$  let us denote by $\hat{f}(\omega) \in \mathbb C$ the value of germ $f(\omega)$ at point $\omega \in \bar{V}_{k}$.

We endow each space $\Gamma(\bar{V}_{k},\mathcal O)$ with semi-norm $|f|_k:=\sup_{\omega \in \bar{V}_{k}}|\hat{f}(\omega)|$. Conditions (b)--(d) are trivially satisfied. For the proof of (a), let us fix a section $f \in \Gamma(\bar{V}_{k},\mathcal O)$. 
By definition, a section of sheaf $\mathcal O$ over $\bar{V}_{k}:=\bar{V}_{0,k} \times \bar{N}_k$ is the restriction of a section of $\mathcal O$ over an open neighbourhood of $\bar{V}_{k}$. In particular, there exists an open neighbourhood $L \subset K$ of $\bar{N}_k$ such that section $f|_{\bar{V}_k}$ admits a bounded extension to $\bar{V}_{0,k} \times L$.
Since $\hat{G}_{\mathfrak a}~(\supset K)$ is a normal space, there exists a function $\rho_{k} \in C(K)$ such that $\rho_{k} \equiv 1$ on $\bar{N}_{k}$ and $\rho_{k} \equiv 0$ on $K \setminus L$. We set $\tilde{f}:=f \rho_{k} \in \Gamma(V_{0,k} \times K,\mathcal O)$. Then function $\tilde{f}$ determines a holomorphic function $\hat{f}$ defined in a neighbourhood of $\bar{V}_{0,k}$ with values 
in the Banach space $C_b(K)$ of bounded continuous functions on $K$ endowed with $\sup$-norm $\|\cdot\|$. We now apply the Runge-type approximation theorem for Banach-valued holomorphic functions, see \cite{Bu2}, to obtain that for every $\varepsilon>0$ there is a function $\hat{F} \in \mathcal O(U_0,C_b(K))$ such that $\sup_{x \in \bar{V}_{0,k}}\|\hat{f}(x)-\hat{F}(x)\|<\varepsilon.$ Then $\hat{F}$ determines a function $F \in \mathcal O(U)$ such that $\sup_{\omega \in \bar{V}_{k}}|f(\omega)-F(\omega)|<\varepsilon,$ which implies (a).
\end{proof}

\begin{corollary}
\label{cechlem1}
$H^i(U,\mathcal O)=0$ for all $i \geq 1.$
\end{corollary}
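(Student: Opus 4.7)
The plan is to assemble the result from the three preceding ingredients: Lemma \ref{cechlem0}(2) (which handles vanishing on closures $\bar{V}_0 \times S$), Lemma \ref{grlem} (which verifies the Runge condition for $\mathcal O|_U$), and Lemma \ref{grrunge} (which bridges the gap from the exhausting pieces to $U$ itself).

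First I would invoke Lemma \ref{exhlem}(2) to fix an exhaustion $U = \bigcup_k V_k$ with $V_k = V_{0,k} \times N_k$, where each $V_{0,k} \Subset U_0$ is an open polydisk and each $N_k \in \mathfrak Q$ with $\bar{N}_k \Subset K$. For each $k$, I would apply Lemma \ref{cechlem0}(2) with the choices $V_0 := V_{0,k}$ and $S := \bar{N}_k$ (a closed subset of $K$, so the hypotheses of Lemma \ref{cechlem0}(2) are met); this gives
\begin{equation*}
H^i(\bar{V}_k, \mathcal O) = H^i(\bar{V}_{0,k} \times \bar{N}_k, \mathcal O) = 0 \quad \text{ for all } i \geq 1, \ k \geq 1.
\end{equation*}

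Having the vanishing on each $\bar{V}_k$, Lemma \ref{grrunge}(1) immediately delivers $H^i(U, \mathcal O) = 0$ for all $i \geq 2$. The only remaining case is $i=1$, which is handled by Lemma \ref{grrunge}(2): since Lemma \ref{grlem} tells us that $\mathcal O|_U$ satisfies the Runge condition relative to the exhaustion $\{V_k\}$, and since we have just verified $H^1(\bar{V}_k, \mathcal O) = 0$ for all $k$, we conclude $H^1(U, \mathcal O) = 0$ as well.

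This proof is essentially a bookkeeping assembly, so there is no real obstacle — the substantive work has already been done in Lemma \ref{cechlem0}(2) (the \v{C}ech vanishing on closed product pieces, obtained by the $\bar{\partial}$-argument with partitions of unity in the fibre direction) and in Lemma \ref{grlem} (Runge approximation via Bungart's Banach-valued approximation theorem). One only needs to check that the exhaustion provided by Lemma \ref{exhlem}(2) matches the format required by Definition \ref{grdef} and by Lemma \ref{cechlem0}(2), which is transparent from the construction.
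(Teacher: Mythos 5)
Your proposal is correct and follows exactly the paper's route: the paper's proof of Corollary \ref{cechlem1} is precisely "Follows from Lemmas \ref{cechlem0}(2), \ref{grrunge} and \ref{grlem}," and your write-up just makes explicit the choice of exhaustion from Lemma \ref{exhlem}(2) and the application of Lemma \ref{cechlem0}(2) with $S:=\bar N_k$, which is how the cited lemmas are meant to be combined.
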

\begin{proof}
Follows from Lemmas \ref{cechlem0}(2), \ref{grrunge} and \ref{grlem}.
\end{proof}


\begin{lemma}
\label{surjlem}
Let $\mathcal B$, $\mathcal R$ be analytic sheaves on $U$.
Let $V_0 \Subset U_0$ be an open polydisk, $S \subset K$ a closed subset. 
Suppose that sequence
\begin{equation}
\label{cseq8}
\mathcal B \overset{\psi}{\to} \mathcal R \to 0
\end{equation}
is exact.
Then the sequence
\begin{equation}
\label{cseq9}
q_*(\mathcal B|_{\bar{V}_0 \times S}) \overset{q_*\psi}{\to} q_*(\mathcal R|_{\bar{V}_0 \times S}) \to 0
\end{equation}
is also exact. 
Here $q:\bar{V}_0 \times S \rightarrow \bar{V}_0$ is the projection on the first component and $q_*$ is the direct image functor.
\end{lemma}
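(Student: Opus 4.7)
The plan is to verify exactness of (\ref{cseq9}) stalkwise, reducing to a local lifting problem that is solved by a partition of unity in the $\hat G_{\mathfrak a}$-direction — a device peculiar to our setting, since continuous functions of $\xi \in \hat G_{\mathfrak a}$ define holomorphic multipliers on $U = U_0 \times K$. Fix $x_0 \in \bar V_0$ and a germ $\bar r \in q_*(\mathcal R|_{\bar V_0 \times S})_{x_0}$, represented by a section $r$ of $\mathcal R$ on an open neighbourhood of $\{x_0\} \times S$ in $U$. Assuming $S$ is compact (the intended setting, under which $\{x_0\} \times S$ admits a product neighbourhood), this open set contains some $W \times L$ with $W \subset U_0$ an open neighbourhood of $x_0$ and $L \subset K$ open, $L \supset S$. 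For each $\xi \in S$, surjectivity of $\psi$ at the stalk $\mathcal R_{(x_0,\xi)}$ yields a product neighbourhood $A_\xi = W_\xi \times L_\xi \subset W \times L$ and a section $b_\xi \in \Gamma(A_\xi,\mathcal B)$ with $\psi(b_\xi) = r|_{A_\xi}$. By compactness of $S$ we extract a finite subcover $\{L_{\xi_i}\}_{i=1}^m$ of $S$, and setting $W' := \bigcap_{i=1}^m W_{\xi_i}$ we obtain local lifts $b_{\xi_i} \in \Gamma(W' \times L_{\xi_i},\mathcal B)$ of $r$.

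To glue, use normality of the compact space $\hat G_{\mathfrak a}$ to choose $\rho_1,\dots,\rho_m \in C(\hat G_{\mathfrak a})$ with $\supp \rho_i \subset L_{\xi_i}$ and $\sum_i \rho_i \equiv 1$ on $S$. The crucial observation is that each $\rho_i$, viewed as a function on $U_0 \times K$ depending only on $\xi$, lies in $\mathcal O(U_0 \times K)$: its pullback $\tilde j^* \rho_i$ is constant in $x$ and, since $G$ is discrete, trivially holomorphic on $U_0 \times G$, hence holomorphic in the sense of subsection \ref{charts}. Consequently $\rho_i b_{\xi_i}$ is a section of the $\mathcal O$-module $\mathcal B$ over $W' \times L_{\xi_i}$, and since it vanishes on $W' \times (L_{\xi_i} \setminus \supp \rho_i)$ it extends by the zero section to a section $\tilde b_i \in \Gamma(W' \times K,\mathcal B)$. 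Setting $b := \sum_{i=1}^m \tilde b_i \in \Gamma(W' \times K,\mathcal B)$, the linearity of $\psi$ gives
\begin{equation*}
\psi(b) = \sum_{i=1}^m \rho_i \, \psi(b_{\xi_i}) = \Bigl(\sum_{i=1}^m \rho_i\Bigr)\, r = r \quad \text{on } W' \times S,
\end{equation*}
so the germ of $b$ at $x_0$ is the required lift of $\bar r$ under $q_*\psi$. As $x_0 \in \bar V_0$ was arbitrary, $q_*\psi$ is stalkwise surjective.

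The main technical point is the identification of continuous functions in the $\xi$-variable as holomorphic multipliers on $\mathcal B$; this substitutes for the nonexistent holomorphic partitions of unity in ordinary complex geometry. Without this feature one would instead have to analyze the $1$-cocycle $\{b_{\xi_i} - b_{\xi_j}\} \in \ker \psi$ and solve a cohomological obstruction problem, which is not available at the present level of generality (no coherence hypothesis on $\mathcal B$ or $\ker\psi$). The partition-of-unity argument, made possible by the hybrid complex/continuous structure on $c_{\mathfrak a}X$, bypasses this obstruction entirely.
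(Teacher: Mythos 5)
Your proof is correct and follows essentially the same route as the paper: stalkwise surjectivity is reduced to lifting a section of $\mathcal R$ near $\{x_0\}\times S$, local lifts over product neighbourhoods are extracted by compactness, and they are glued with a continuous partition of unity in the fibre variable, which acts by holomorphic multipliers. The only cosmetic differences are that the paper covers a compact neighbourhood $\bar M_2$ of $S$ rather than $S$ itself (so it never needs $S$ compact) and leaves implicit the observation, which you rightly spell out, that functions in $C(\hat G_{\mathfrak a})$ pulled back to $U_0\times K$ are holomorphic.
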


\begin{proof}
We denote $\hat{\mathcal B}:=q_*(\mathcal B|_{\bar{V}_0 \times S})$, $\hat{\mathcal R}:=q_*(\mathcal R|_{\bar{V}_0 \times S})$, $\hat{\psi}:=q_*\psi$. We have to show that $\hat{\psi}$ is surjective. Given open subsets $W_0 \subset \bar{V}_0$, $L \subset S$ by 
$\Psi_{W_0 \times L}$ we denote the homomorphism of modules of sections $\Gamma(W_0 \times L,\mathcal B) \rightarrow \Gamma(W_0 \times L,\mathcal R)$ induced by $\psi$, and by $\hat{\Psi}_{W_0}$ the homomorphism of modules of sections $\Gamma(W_0 ,\hat{\mathcal B}) \rightarrow \Gamma(W_0,\hat{\mathcal R})$ induced by $\hat{\psi}$.
By the definition of direct image sheaf (see, e.g., \cite[Ch.~F]{Gun3})
\begin{equation}
\label{isov}
\Gamma(W_0 \times S,\mathcal B) \cong \Gamma(W_0,\hat{\mathcal B}), \quad \Gamma(W_0 \times S,\mathcal R) \cong \Gamma(W_0,\hat{\mathcal R}).
\end{equation}
To prove exactness of (\ref{cseq9}) it suffices to show that for every point $x_0 \in \bar{V}_0$, a neighbourhood $W_0 \subset \bar{V}_0$ of $x_0$, and a section $\hat{f}_{x_0} \in \Gamma(W_0,\hat{\mathcal R})$ there exists a section $\hat{g}_{x_0} \in \Gamma(\tilde{W}_0,\hat{\mathcal B})$ over a neighbourhood $\tilde{W}_0 \subset W_{0}$ of $x_0$ such that $\hat{\Psi}_{\tilde{W}_0}(\hat{g}_{x_0})=\hat{f}_{x_0}|_{\tilde{W}_0}$.

Let $f_{x_0} \in \Gamma(W_0 \times S,\mathcal R)$ be the section corresponding to $\hat{f}_{x_0}$ under the second isomorphism in (\ref{isov}). By definition, a section of sheaf $\mathcal R$ over $W_0 \times S$ is the restriction of a section of $\mathcal R$ over an open neighbourhood of $W_0 \times S$. Therefore, shrinking $W_0$, if necessary, we obtain that $f_{x_0}$ can be extended to a section of $\mathcal R$ over $W_0 \times M_1$, where $M_1 \subset K$ is an open neighbourhood of $S$. Since $\psi$ is a surjective sheaf homomorphism, for each point $y \in \{x_0\} \times M_1$ there exist open sets $W_{0,y} \subset W_0$, $L_y \subset M_1$  and a section $s_y \in \Gamma(W_{0,y} \times L_y,\mathcal B)$ such that $y \in W_{0,y} \times L_y$ and $\Psi_{W_{0,y} \times L_y}(s_y)=f_{x_0}|_{W_y \times L_y}$. Since space $\hat{G}_{\mathfrak a}~(\supset M_1)$ is compact  Hausdorff and, hence, is normal, there exists an open subset $M_2 \subset M_1$ such that $S \subset M_2$, and $\bar{M}_2 \subset M_1$. 
Since $\bar{M}_2$ is compact, there exist finitely many points $\{y_j\}_{j=1}^m \subset S$ such that $\bar{M}_2 \subset \cup_j L_{y_j}$. We set $\tilde{L}_{y_j}:=\bar{M}_2 \cap L_{y_j}$ for all $j$. There exists a partition of unity $\{\rho_j\} \subset C(\bar{M}_2)$ subordinate to $\{\tilde{L}_{y_j}\}$. 
We define $\tilde{W}_0:=\cap_j W_{0,y_j}$, and set
\begin{equation*}
g_{x_0}(z,\eta):=\sum_j \rho_j(\eta) s_{y_j}(z,\eta), \quad (z,\eta) \in \tilde{W}_0 \times S.
\end{equation*}
Then $g_{x_0} \in \Gamma(\tilde{W}_0 \times M_2,\mathcal B)$. We have
\begin{equation*}
\Psi_{\tilde{W}_0 \times S}(g_{x_0})=\sum_j \rho_j \Psi_{\tilde{W}_0 \times \tilde{L}_{y_j}}(s_{y_j})=\sum_{j} \rho_j f_{x_0}|_{\tilde{W}_0 \times \tilde{L}_{y_j}}=f_{x_0}|_{\tilde{W}_0 \times S}.
\end{equation*}
Let $\hat{g}_{x_0}$ denote the section in $\Gamma(\tilde{W}_{0},\hat{\mathcal B})$ corresponding to $g_{x_0}$ under the first isomorphism in (\ref{isov}). Then
$\hat{\Psi}_{\tilde{W}_{0}}(\hat{g}_{x_0})=\hat{f}_{x_0}|_{\tilde{W}_{0}},$
as required.
\end{proof}

\begin{defin}
\label{adef23}
We say that an analytic sheaf $\mathcal R$ (on $U$) \textit{admits a free resolution of length $N \geq 1$ over $U$} if there exists an exact sequence 
\begin{equation}
\label{aseq3}
\mathcal F_N|_{U} \overset{\varphi_{N-1}}{\to} \dots \overset{\varphi_{2}}{\to} \mathcal F_2|_{U} \overset{\varphi_1}{\to} \mathcal F_1|_{U} \overset{\varphi_0}{\to} \mathcal R \to 0,
\end{equation}
where $\mathcal F_i$ are free sheaves, i.e., sheaves of the form $\mathcal O^k$ for some $k \geq 0$ (by definition, $\mathcal O^0=\{0\}$).
\end{defin}

\begin{lemma}
\label{lem3comp}
Let $\mathcal R$ be an analytic sheaf on $U$ having a free resolution of length $3N$
\begin{equation}
\label{bseq1a}
\mathcal F_{3N}|_{U} \overset{\varphi_{3N-1}}{\to} \dots \overset{\varphi_{2}}{\to} \mathcal F_2|_{U} \overset{\varphi_1}{\to} \mathcal F_1|_{U} \overset{\varphi_0}{\to} \mathcal R \to 0.
\end{equation} 
If $N \geq n~(=\dim_{\mathbb C} U_0)$, then for each $k$ the induced sequence of sections
\begin{equation}
\label{seqsa}
\Gamma(\bar{V}_k,\mathcal F_{N}) \overset{\bar{\varphi}_{N-1}}{\to} \dots \overset{\bar{\varphi}_{2}}{\to} \Gamma(\bar{V}_k,\mathcal F_2) \overset{\bar{\varphi_1}}{\to} \Gamma(\bar{V}_k,\mathcal F_1) \overset{\bar{\varphi}_0}{\to} \Gamma(\bar{V}_k,\mathcal R) \to 0
\end{equation} 
is exact.
\end{lemma}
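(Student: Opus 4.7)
The plan is to dimension-shift along the given resolution using Corollary \ref{cechlem1}, then reduce the remaining cohomological vanishing to a statement on the base polydisk $\bar{V}_{0,k}$ via push-forward, where the Banach-valued Cartan theorem B of \cite{Lt} applies.

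Introduce the kernel sheaves $\mathcal{K}_0 := \mathcal{R}$ and $\mathcal{K}_i := \ker \varphi_{i-1}$ for $1 \leq i \leq 3N-1$; the exactness of \eqref{bseq1a} produces short exact sequences
\begin{equation*}
0 \to \mathcal{K}_i \to \mathcal{F}_i \to \mathcal{K}_{i-1} \to 0, \quad 1 \leq i \leq 3N-1,
\end{equation*}
together with a surjection $\mathcal{F}_{3N} \to \mathcal{K}_{3N-1} \to 0$. Taking cohomology on $\bar{V}_k$ and applying Corollary \ref{cechlem1}, which yields $H^j(\bar{V}_k,\mathcal{F}_i) = 0$ for $j \geq 1$, gives the dimension-shift isomorphisms
\begin{equation*}
H^j(\bar{V}_k,\mathcal{K}_{i-1}) \cong H^{j+1}(\bar{V}_k,\mathcal{K}_i) \quad (j \geq 1),
\end{equation*}
together with exact sequences
\begin{equation*}
0 \to \Gamma(\bar{V}_k,\mathcal{K}_i) \to \Gamma(\bar{V}_k,\mathcal{F}_i) \to \Gamma(\bar{V}_k,\mathcal{K}_{i-1}) \to H^1(\bar{V}_k,\mathcal{K}_i) \to 0.
\end{equation*}
It follows that the exactness of \eqref{seqsa} (including surjectivity onto $\Gamma(\bar{V}_k,\mathcal{R})$) is equivalent to the vanishing $H^1(\bar{V}_k,\mathcal{K}_i) = 0$ for $1 \leq i \leq N$, which by iterated shifting reduces to $H^{n+1}(\bar{V}_k,\mathcal{K}_{i+n}) = 0$; the indices $i+n$ lie in $[n+1,N+n] \subseteq [1,3N-1]$ since $N \geq n$, so the sheaves $\mathcal{K}_{i+n}$ are defined.

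To obtain this remaining vanishing, we push forward along the projection $q : \bar{V}_k = \bar{V}_{0,k} \times \bar{N}_k \to \bar{V}_{0,k}$. Iterating Lemma \ref{surjlem} on the tail $\mathcal{F}_{3N} \to \dots \to \mathcal{F}_{i+n+1} \to \mathcal{K}_{i+n} \to 0$ (a length-$(3N-i-n)$ resolution) shows that each of the short exact sequences there remains exact after $q_*$. Splicing produces a free resolution of $q_*\mathcal{K}_{i+n}$ of length at least $2N - n \geq n$ by Banach-valued free sheaves $q_*\mathcal{F}_j \cong \mathcal{O}^{C(\bar{N}_k)^{m_j}}$ on the closed polydisk $\bar{V}_{0,k} \subset \mathbb{C}^n$. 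This places $q_*\mathcal{K}_{i+n}$ within the Banach-coherent analytic Fr\'{e}chet sheaf framework of Leiterer \cite{Lt}, whose analog of Cartan theorem B gives $H^j(\bar{V}_{0,k},q_*\mathcal{K}_{i+n}) = 0$ for $j \geq 1$. A Leray-type comparison $H^j(\bar{V}_k,\mathcal{K}_{i+n}) \cong H^j(\bar{V}_{0,k},q_*\mathcal{K}_{i+n})$, valid because $R^\ell q_* \mathcal{F} = 0$ for $\ell \geq 1$ (by Corollary \ref{cechlem1} applied to product neighbourhoods), then closes the argument.

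The main obstacle will be making this push-down rigorous: verifying that $q_*\mathcal{K}_{i+n}$ qualifies as Banach-coherent in Leiterer's precise sense (so that his Cartan B applies on the closed polydisk), and justifying the Leray-type comparison via vanishing of the higher direct images $R^\ell q_*$. The factor $3$ in the length $3N$ is precisely what affords enough room to perform the $n$-fold dimension-shift and still leave a tail of length $\geq 2N - n \geq n$ on which the Banach-coherent machinery on the base can operate.
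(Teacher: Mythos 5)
Your skeleton --- dimension-shifting along the resolution, reducing exactness of \eqref{seqsa} to $H^1(\bar V_k,\mathcal K_i)=0$ for $1\le i\le N$, and bringing in the projection $q$ together with Lemma \ref{surjlem} --- is the right one, but the way you close the argument has a genuine gap, in two places. First, the ``Leray-type comparison'' $H^j(\bar V_k,\mathcal K_{i+n})\cong H^j(\bar V_{0,k},q_*\mathcal K_{i+n})$ is not justified by the vanishing of $R^\ell q_*\mathcal F$ for the \emph{free} sheaves: the comparison theorem (\cite[Ch.~F, Cor.~6]{Gun3}, the one used for \eqref{directiso}) requires that points of the base have a fundamental system of neighbourhoods $W_0$ with $H^\ell(q^{-1}(W_0),\mathcal K_{i+n})=0$ for $\ell\ge1$, i.e., local vanishing for the \emph{kernel} sheaves themselves --- which is essentially the statement being proved (it is the content of Proposition \ref{vanprop}(2), established later and resting on this very lemma). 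Second, invoking Leiterer's Cartan B for $q_*\mathcal K_{i+n}$ presupposes that it is a Banach coherent analytic Fr\'echet sheaf, which requires the Fr\'echet topologies and continuity statements of Proposition \ref{frechetprop}; that proposition is itself deduced from Proposition \ref{vanprop}(1), hence from Lemma \ref{lem3comp}, so this route is circular as it stands. (A smaller point: the vanishing $H^j(\bar V_k,\mathcal F_i)=0$ on the closed set $\bar V_k$ is Lemma \ref{cechlem0}(2), not Corollary \ref{cechlem1}, which concerns open $U$.)

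The paper closes the argument without Leiterer and without any upstairs/downstairs comparison for the kernel sheaves: it pushes the whole resolution down \emph{first} (by definition of $q_*$, the section sequence \eqref{seqsa} coincides with the section sequence of the pushed-down complex over $\bar V_{0,k}$), uses Lemma \ref{surjlem} to keep the short exact sequences exact after applying $q_*$, performs the dimension shift downstairs for $\mathcal T_i=q_*\mathcal R_i$ using $H^l(\bar V_{0,k},\hat{\mathcal F}_i)=0$, and kills the top term by the purely topological fact that $\bar V_{0,k}$ has real dimension $2n$, so that $H^{2n+1}(\bar V_{0,k},\cdot)=0$ for \emph{any} sheaf. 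This needs a shift of $2n-1$ steps, i.e., $i+2n-1\le 3N-1$ for all $i\le N$, which is exactly what the hypothesis ``length $3N$ with $N\ge n$'' is calibrated for; your shift by only $n$ steps and your reliance on an analytic vanishing theorem on the base are symptoms of the same missing idea. To repair the proof, replace your last two steps by: push down, verify $H^l(\bar V_{0,k},\hat{\mathcal F}_i)=0$, shift $2n-1$ times, and conclude from $H^{2n+1}(\bar V_{0,k},\mathcal T_{i+2n-1})=0$ by dimension of the base.
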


\begin{proof}
Let us fix $k \geq 1$.
Let
$q:\bar{V}_k \rightarrow \bar{V}_{0,k}$ be the projection, $q(x,\eta)=x$, $(x,\eta) \in V_k:=V_{0,k} \times N_k$ (see~notation before Definition \ref{grdef}). 
Let $q_*$ denote the direct image functor; set $\hat{\mathcal F}_i:=q_*(\mathcal F_i|_{\bar{V}_k})$, $\hat{\mathcal R}:=q_*(\mathcal R|_{\bar{V}_k})$, 
$\hat{\varphi}_i:=q_*\varphi_i$.
Applying $q_*$ to (\ref{bseq1a}) we obtain a complex of sheaf homomorphisms
\begin{equation}
\label{bseq3a}
\hat{\mathcal F}_{3N} \overset{\hat{\varphi}_{3N-1}}{\to} \dots \overset{\hat{\varphi}_{1}}{\to} \hat{\mathcal F}_1  \overset{\hat{\varphi}_{0}}{\to} \hat{\mathcal R} \to 0
\end{equation}
(a priori this sequence is not exact).
By the definition of a direct image sheaf, the sequence of sections of (\ref{bseq3a}) over $\bar{V}_{0,k}$ truncated to the $N$-th term
\begin{equation}
\label{bseq2a}
\Gamma(\bar{V}_{0,k},\hat{\mathcal F}_{N}) \to \dots \to \Gamma(\bar{V}_{0,k},\hat{\mathcal F_1}) \to \Gamma(\bar{V}_{0,k},\hat{\mathcal R}) \to 0
\end{equation}
coincides with sequence (\ref{seqsa}). 
Hence, the assertion would follow once we proved that sequence (\ref{bseq2a}) is exact.

Now, exact sequence (\ref{bseq1a}) yields the collection of short exact sequences
\begin{equation}
\label{bseq6a}
0 \to \mathcal R_i|_{\bar{V}_{k}} \overset{\iota}{\to} \mathcal F_i|_{\bar{V}_{k}}  \overset{\varphi_{i-1}}{\to} \mathcal R_{i-1}|_{\bar{V}_{k}} \to 0, \quad 1 \leq i \leq 3N-1,
\end{equation}
where $\mathcal R_i:=\Imag \varphi_{i}$ ($0 \leq i \leq 3N-1$), $\mathcal R_0:=\mathcal R$ and $\iota$ stands for inclusion. We apply to (\ref{bseq6a}) the direct image functor $q_*$ and Lemma \ref{surjlem}
to obtain the collection of short exact sequences (recall that $q_*$ is left exact, see, e.g., \cite[Ch.~F]{Gun3})
\begin{equation}
\label{bseqshorta}
0 \to \mathcal T_i \overset{\hat{\iota}}{\to} \hat{\mathcal F}_i \overset{\hat{\varphi}_{i-1}}{\to} \mathcal T_{i-1} \to 0, \quad 1 \leq i \leq 3N-1,
\end{equation}
where $\mathcal T_i:=q_*\mathcal R_i$ ($0 \leq i \leq 3N-1$).
An argument similar to that in the proof of Lemma \ref{cechlem0} implies $H^l(\bar{V}_{0,k},\hat{\mathcal F}_i)=0$, $l \geq 1$, $k \geq 1$, $1 \leq i \leq 3N$. Hence, each short exact sequence (\ref{bseqshorta}) yields a long exact sequence of the form
\begin{multline*}
0 \to \Gamma(\bar{V}_{0,k},\mathcal T_i) \to \Gamma(\bar{V}_{0,k},\hat{\mathcal F_i}) \to \Gamma(\bar{V}_{0,k},\mathcal T_{i-1}) \to \\
H^1(\bar{V}_{0,k},\mathcal T_i) \to 0 \to H^1(\bar{V}_{0,k},\mathcal T_{i-1}) \to H^2(\bar{V}_{0,k},\mathcal T_i) \to \dots\, .
\end{multline*}
Thus,
$H^m(\bar{V}_{0,k},\mathcal T_{i}) \cong H^{m+1}(\bar{V}_{0,k},\mathcal T_{i+1})$, $m \geq 1$, 
$1 \leq i \leq 3N-2$,
and so
\begin{equation*}
H^m(\bar{V}_{0,k},\mathcal T_{i}) \cong H^{m+l+1}(\bar{V}_{0,k},\mathcal T_{i+l+1}), \quad l \geq -1.
\end{equation*}

Let us take $m=1$, $1 \leq i \leq N$, $l:=2n-2$. Then
$$H^1(\bar{V}_{0,k},\mathcal T_{i}) \cong H^{2n+1}(\bar{V}_{0,k},\mathcal T_{i+2n-1}), \quad 1 \leq i \leq N.$$
Since $N \geq n$, we have $i+2n-1 \leq 3N-1$ for all $1 \leq i \leq N$; hence, $\mathcal T_{i+2n-1}$ is well defined for all $1 \leq i \leq N$. Since topological dimension of $\bar{V}_{0,k}$ is equal to $2n$, $H^{2n+1}(\bar{V}_{0,k},\mathcal T_{i+2n-1})=0$; therefore $H^1(\bar{V}_{0,k},\mathcal T_{i})=0$, $1 \leq i \leq N$.
Thus, we obtain the collection of short exact sequences
\begin{equation*}
0 \to \Gamma(\bar{V}_{0,k},\mathcal T_i) \to \Gamma(\bar{V}_{0,k},\hat{\mathcal F_i}) \to \Gamma(\bar{V}_{0,k},\mathcal T_{i-1}) \to 0, \quad 1 \leq i \leq N,
\end{equation*}
which yields exactness of sequence (\ref{bseq2a}). The proof is complete.
\end{proof}

\begin{lemma}
\label{runge2}
Let $\mathcal R$ be an analytic sheaf on $U$ having a free resolution of length $3N$
\begin{equation}
\label{seqb}
\mathcal F_{3N}|_{U} \overset{\varphi_{3N-1}}{\to} \dots \overset{\varphi_{2}}{\to} \mathcal F_2|_{U} \overset{\varphi_1}{\to} \mathcal F_1|_{U} \overset{\varphi_0}{\to} \mathcal R \to 0.
\end{equation}
If $N \geq n$, then $\mathcal R$ satisfies the Runge condition.
\end{lemma}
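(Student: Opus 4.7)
My plan is to define the seminorm on $\Gamma(\bar V_k,\mathcal R)$ by
\begin{equation*}
|f|_k:=\inf_g\left\{\sup_{x\in\bar V_k}|g(x)|:g\in\Gamma(\bar V_k,\mathcal F_1),\ \bar\varphi_0(g)=f\right\},
\end{equation*}
in complete analogy with the global seminorm of Proposition \ref{frechetprop}. The first thing to check is that $|f|_k<\infty$ for every $f\in\Gamma(\bar V_k,\mathcal R)$, but this is exactly the surjectivity of $\bar\varphi_0:\Gamma(\bar V_k,\mathcal F_1)\to\Gamma(\bar V_k,\mathcal R)$ at the end of the truncated sequence in Lemma \ref{lem3comp}, whose hypothesis $N\geq n$ is why we assumed $N\geq n$ in the present lemma. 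With the seminorm in place, I would then verify the four items of Definition \ref{grdef} in turn.

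Condition (b) is immediate with $M_k=1$: if $\bar\varphi_0(g)=f$ on $\bar V_{k+1}$, then $\bar\varphi_0(g|_{\bar V_k})=f|_{\bar V_k}$ and $\sup_{\bar V_k}|g|\leq\sup_{\bar V_{k+1}}|g|$. For (a), given $f\in\Gamma(\bar V_k,\mathcal R)$ I would choose $g\in\Gamma(\bar V_k,\mathcal F_1)=\Gamma(\bar V_k,\mathcal O^{m_1})$ with $\bar\varphi_0(g)=f$; by Lemma \ref{grlem} (the Runge condition for $\mathcal O$, itself proved via Bungart's Banach-valued Runge theorem) each component of $g$ is approximated in $\sup_{\bar V_k}$-norm by an element of $\mathcal O(U)$, producing $G\in\Gamma(U,\mathcal O^{m_1})$ with $\sup_{\bar V_k}|g-G|<\varepsilon$, so that $\bar\varphi_0(G)\in\Gamma(U,\mathcal R)$ satisfies $|f-\bar\varphi_0(G)|_{\bar V_k}|_k<\varepsilon$.

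For completeness (c), given a Cauchy sequence $\{f_j\}$ in $\Gamma(\bar V_{k+1},\mathcal R)$ I would pass to a subsequence (still denoted $f_j$) with $|f_{j+1}-f_j|_{k+1}<2^{-j}$, and for each $j$ pick $g_j\in\Gamma(\bar V_{k+1},\mathcal O^{m_1})$ with $\bar\varphi_0(g_j)=f_{j+1}-f_j$ and $\sup_{\bar V_{k+1}}|g_j|<2^{-j+1}$. Writing $f_1|_{\bar V_k}=\bar\varphi_0(h_0)$ via surjectivity on $\bar V_k$, the partial sums $h_0+\sum_{j=1}^L g_j|_{\bar V_k}$ form a Cauchy sequence in $\Gamma(\bar V_k,\mathcal O^{m_1})$ under $\sup_{\bar V_k}$-norm; since $\mathcal O$ satisfies condition (c) by Lemma \ref{grlem}, they converge to some $h_\infty\in\Gamma(\bar V_k,\mathcal O^{m_1})$, and then $f_\infty:=\bar\varphi_0(h_\infty)$ is the required limit of $\{f_j|_{\bar V_k}\}$ under $|\cdot|_k$ (a Cauchy sequence with a convergent subsequence in a seminormed space converges).

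The main obstacle I expect is condition (d), since it is a non-triviality statement rather than an estimate. If $|f|_{k+1}=0$, I would select $g_j\in\Gamma(\bar V_{k+1},\mathcal O^{m_1})$ with $\bar\varphi_0(g_j)=f$ and $\sup_{\bar V_{k+1}}|g_j|\to 0$. For any point $x\in\bar V_k$, fixing a coordinate polydisk neighborhood of $x$ inside $\bar V_{k+1}$ and applying Cauchy estimates on the $U_0$-variables together with uniform $C(\bar N_{k+1})$-boundedness in the fibre direction shows $g_j\to 0$ in the Fréchet topology of the stalk $\,_x\mathcal O^{m_1}$; since each stalk map $\bar\varphi_{0,x}$ of an analytic homomorphism is $\,_x\mathcal O$-linear and hence continuous with respect to the Fréchet stalk topologies (here one invokes the construction of Proposition \ref{frechetprop}, whose local statement applies equally well over the basis element containing $x$), we conclude $f_x=0$ for every $x\in\bar V_k$, so $f|_{\bar V_k}=0$. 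This completes verification of all four items, proving that $\mathcal R$ satisfies the Runge condition.
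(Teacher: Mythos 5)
Your choice of seminorm and your verifications of (a), (b) and (c) coincide with the paper's proof: the paper likewise takes $M_k=1$ for (b), uses the Banach-valued Runge theorem via Lemma \ref{grlem} for (a), and lifts a Cauchy sequence in $\Gamma(\bar V_{k+1},\mathcal R)$ to a Cauchy sequence in $\Gamma(\bar V_{k+1},\mathcal O^{m_1})$ for (c). The problem is condition (d), exactly where you anticipated trouble. Your argument there is circular: you deduce $f_x=0$ from ``$g_{j,x}\to 0$ and $\bar\varphi_{0,x}$ is continuous,'' but (i) $\mathcal O$-linearity of a stalk map does not imply continuity, and stalks are inductive limits of the spaces of sections over shrinking neighbourhoods, not Fr\'echet spaces, so Proposition \ref{frechetprop} (which concerns sections over open sets $U\in\mathfrak B$) gives you nothing at the level of germs; and (ii) even granting continuity, concluding $f_x=0$ from the fact that the constant sequence $\bar\varphi_{0,x}(g_{j,x})=f_x$ converges to $\bar\varphi_{0,x}(0)=0$ requires the quotient topology on $\phantom{}_{x}\mathcal R\cong\phantom{}_{x}\mathcal O^{m_1}/\phantom{}_{x}(\Ker\varphi_0)$ to be Hausdorff, i.e., requires the stalk of $\Ker\varphi_0$ to be closed under uniform convergence of representatives. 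That closedness is precisely the content of condition (d) (the analogue of Cartan's closure-of-modules theorem), so you are assuming what you must prove.

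The paper's proof of (d) is where the hypothesis of a resolution of length $3N$ (rather than a single surjection $\mathcal F_1\to\mathcal R\to 0$) is actually used. Writing $g_l:=h_1-h_l\in\Gamma(\bar V_{k+1},\Ker\varphi_0)$ with $g_l\to h_1$ uniformly, one invokes the next fragment $0\to\Ker\varphi_1\to\mathcal F_2\to\Ker\varphi_0\to 0$ and the exactness of the induced sequence of sections over $\bar V_{k+1}$ (Lemma \ref{lem3comp}), passes to the Banach completions $\mathcal A(\bar V_{k+1},\mathcal F_2)\to\mathcal A(\bar V_{k+1},\Ker\varphi_0)$, lifts the limit $g$ of $\{g_l\}$ to some $r$ in the completion, and then restricts to $\bar V_k$, where $r$ becomes an honest section of $\mathcal F_2$; this exhibits $h_1|_{\bar V_k}=g|_{\bar V_k}=\bar\varphi_1(r|_{\bar V_k})$ as a genuine section of $\Ker\varphi_0$ over $\bar V_k$, whence $f|_{\bar V_k}=0$. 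Some device of this kind --- using the second syzygy to control the closure of the kernel --- is needed; it cannot be replaced by a formal continuity argument on stalks.
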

\begin{proof}
For a given section $h$ of sheaf $\mathcal O^m|_U$  by $\hat{h}(\omega) \in \mathbb C^m$ we denote the value of germ $h(\omega)$ at $\omega \in U$.
We have a short exact sequence
\begin{equation*}
0 \to \Ker \varphi_{0} \overset{\iota}{\to} \mathcal F_1|_U  \overset{\varphi_{0}}{\to} \mathcal R  \to 0,
\end{equation*}
where $\iota$ stands for inclusion.
In the proof of Lemma \ref{lem3comp} we have shown that, under the present assumptions, for each $k \geq 1$ the sequence of sections
\begin{equation*}
0 \to \Gamma(\bar{V}_k,\Ker \varphi_{0}) \overset{\bar{\iota}}{\to} \Gamma(\bar{V}_k,\mathcal F_1) \overset{\bar{\varphi}_{0}}{\to} \Gamma(\bar{V}_k,\mathcal R)  \to 0
\end{equation*}
is exact.
Given a section $h \in \Gamma(\bar{V}_k,\mathcal F_1)$, $\mathcal F_1:=\mathcal O^{m_1}$ for some $m_1\in\mathbb Z_+$, we define semi-norm $|h|_k:=\sup_{x \in \bar{V}_k} \|\hat{h}(x)\|$, where $\|\cdot\|$ is the Euclidean norm in $\mathbb C^{m_1}$.
Now, for a section $h \in \Gamma(\bar{V}_k,\mathcal R)$ we set
\begin{equation}
\label{fseminorm}
|f|_k:=\inf_h \{ |h|_k: h \in \Gamma(\bar{V}_k,\mathcal F_1), \bar{\varphi}_{0}(h)=f\}.
\end{equation}
We obtain a family of semi-norms $\{|\cdot|_k: k \geq 1\}$  on $\Gamma(U,\mathcal R)$. Let us show that for this family of semi-norms conditions (a)-(d) of Definition \ref{grdef} are satisfied.

(a) Let $f \in \Gamma(\bar{V}_{k},\mathcal R)$. There exists a section $h \in \Gamma(\bar{V}_{k},\mathcal F_1)$ such that $f=\bar{\varphi}_{0}(h)$. Using the same argument as in the proof of Lemma \ref{grlem}, we obtain that for any $\varepsilon>0$ there exists a section $\tilde{h} \in \Gamma(U,\mathcal F_1)$ such that $|\tilde{h}-h|_k<\varepsilon$. We set $\tilde{f}:=\bar{\varphi}_{0}(\tilde{h}) \in \Gamma(U,\mathcal R)$. By definition, $|\tilde{f}-f|_k<\varepsilon$, as required.

(b) Let $f \in \Gamma(\bar{V}_{k+1},\mathcal R)$. Since $$\{h \in \Gamma(\bar{V}_{k+1},\mathcal F_1), f=\bar{\varphi}_{0}(h)\}|_{\bar{V}_k} \subset  \{g \in \Gamma(\bar{V}_{k},\mathcal F_1), f|_{\bar{V}_k}=\bar{\varphi}_{0}(g)\}$$
and $|h|_k \leq |h|_{k+1}$ for every $h \in \Gamma(\bar{V}_{k+1},\mathcal F_1)$, condition (b) is satisfied with $M_k=1$ ($k \geq 1$) (see~(\ref{fseminorm})).

(c) Let $\{f_j\}$ be a Cauchy sequence in $\Gamma(\bar{V}_{k+1},\mathcal R)$. We must show that $\{f_j|_{\bar{V_k}}\}$ has a limit in the space $\bigl(\Gamma(\bar{V}_{k},\mathcal R), |\cdot |_k\bigr)$. In fact, there exists a Cauchy sequence $\{h_j\} \subset \Gamma(\bar{V}_{k+1},\mathcal O^{m_1})$ such that $f_j=\bar{\varphi}_{0}(h_j)$ for all $j$. Clearly, there exists a function $h \in \mathcal O(V_{k+1},\mathbb C^{m_1}) \cap C(\bar{V}_{k+1},\mathbb C^{m_1})$ such that 
\begin{equation}
\label{conv1}
\sup_{\omega \in \bar{V}_{k+1}} |h(\omega)-\hat{h}_j(\omega)| \rightarrow 0 \quad \text{ as } j \rightarrow \infty.
\end{equation}
Then $h \in \Gamma(\bar{V}_k,O^{m_1})$ and by (\ref{conv1}) $|h-h_j|_k \rightarrow 0$ as $j \rightarrow \infty$. Now, we set $f:=\bar{\varphi}_{0}(h) \in \Gamma(\bar{V}_k,\mathcal R)$, so by continuity $|f-f_j|_k \rightarrow 0$ as $j \rightarrow \infty$.

(d) Let $f \in \Gamma(\bar{V}_{k+1},\mathcal R)$, $|f|_{k+1}=0$. We must show that $f|_{\bar{V}_k}=0$. Indeed, by definition, there exists a sequence of sections $h_l \in \Gamma(\bar{V}_{k+1},\mathcal F_1)$ such that $f=\bar{\varphi}_{0}(h_l)$ for all $l$ and $\sup_{\omega \in \bar{V}_{k+1}}\|\hat{h}_l(\omega)\| \rightarrow 0$ as $l \rightarrow \infty$. Let $g_l:=h_1-h_l$, $l \geq 1$. Then $g_l \in \Gamma(\bar{V}_{k+1},\Ker \varphi_0)$ and 
\begin{equation}
\label{conv7}
\hat{g}_l(\omega)\rightarrow \hat{h}_1(\omega) \quad \text{uniformly in}\quad \omega \in \bar{V}_{k+1} \text{ as } l \rightarrow \infty. 
\end{equation}

Now, suppose to the contrary that $f|_{\bar{V}_k} \neq 0$. Then $h_1|_{\bar{V}_k} \not \in \Gamma(\bar{V}_k,\Ker \varphi_0)$.

Consider the second fragment of the free resolution of $\mathcal R$:
\begin{equation}
\label{sshort}
0 \to \Ker \varphi_{1} \overset{\iota}{\to} \mathcal F_2|_U  \overset{\varphi_{1}}{\to} \Ker \varphi_0  \to 0,
\end{equation}
and the corresponding sequence of sections (see~Lemma \ref{lem3comp})
\begin{equation}
\label{sectseq}
0 \to \Gamma(\bar{V}_{k+1},\Ker \varphi_{1}) \overset{\bar{\iota}}{\to} \Gamma(\bar{V}_{k+1},\mathcal F_2)  \overset{\bar{\varphi}_{1}}{\to}  \Gamma(\bar{V}_{k+1},\Ker\varphi_0)  \to 0,
\end{equation}
where $\bar{\varphi}_{1}$ is given by a matrix with entries in $\Gamma(\bar{V}_{k+1},\mathcal O)$.

Recall that $\Gamma(\bar{V}_{k+1},\mathcal F_2)$ is endowed with $\sup$-norm
\begin{equation}
\label{seminorm4}
|g|_{k+1}=\sup_{\omega \in \bar{V}_{k+1}}\|\hat{g}(\omega)\|, \quad g \in \Gamma(\bar{V}_{k+1},\mathcal F_2).
\end{equation}
Each section in the space $\Gamma(\bar{V}_{k+1},\mathcal F_2)$ determines a continuous function on $\bar{V}_{k+1}$ holomorphic in $V_{k+1}$.
Let $\mathcal A(\bar{V}_{k+1},\mathcal F_2)$ denote the completion of the space of these functions with respect to norm (\ref{seminorm4}). 
Analogously, we endow the space $\Gamma(\bar{V}_{k+1},\mathcal F_1)$ with $\sup$-norm and 
denote by $\mathcal A(\bar{V}_{k+1},\Ker\varphi_0)$ the completion (with respect to this norm) of its subspace $\Gamma(\bar{V}_{k+1},\Ker \varphi_0)$. 
Then (\ref{sectseq}) yields an exact sequence of Banach spaces
\begin{equation*}
\mathcal A(\bar{V}_{k+1},\mathcal F_2)  \overset{\bar{\varphi}_{1}}{\to} \mathcal A(\bar{V}_{k+1},\Ker\varphi_0)  \to 0.
\end{equation*}
It follows from (\ref{conv7}) that $\{g_l\}$, where $g_l$ are viewed as functions in $\mathcal A(\bar{V}_{k+1},\Ker \varphi_0)$,  is a Cauchy sequence and hence has a limit $g \in \mathcal A(\bar{V}_{k+1},\Ker \varphi_0)$. Then there exists $r \in \mathcal A(\bar{V}_{k+1},\mathcal F_2)$ such that $g=\bar{\varphi}_{1}(r)$. 
Also, by Lemma \ref{lem3comp} we obtain that sequence (\ref{sshort}) induces an exact sequence of sections
$$
0 \to \Gamma(\bar{V}_{k},\Ker \varphi_{1})  \overset{\bar{\iota}|_{\bar{V}_k}}{\to} \Gamma(\bar{V}_{k},\mathcal F_2)  \overset{\bar{\varphi}_{1}|_{\bar{V}_{k}}}{\to}  \Gamma(\bar{V}_{k},\Ker\varphi_0)  \to 0.
$$
Clearly, we have
$$
A(\bar{V}_{k+1},\mathcal F_2)|_{\bar{V}_k}\subset \Gamma(\bar{V}_k,\mathcal F_2) \text{ and } \mathcal A(\bar{V}_{k+1},\Ker\varphi_0)|_{\bar{V}_k}\subset \Gamma(\bar{V}_k,\Ker \varphi_0).
$$
Hence, $r|_{\bar{V}_k} \in \Gamma(\bar{V}_k,\mathcal F_2)$ and
$g|_{\bar{V}_k}=\bar{\varphi}_1(r|_{\bar{V}_k}) \in \Gamma(\bar{V}_k,\Ker \varphi_0)$.
Since, by our definition, $h_1|_{\bar{V}_k}=g|_{\bar{V}_k}$, we have $h_1|_{\bar{V_k}}\in \Gamma(\bar{V}_k,\Ker \varphi_0)$, which contradicts the assumption $f|_{\bar{V}_k} \neq 0$.
\end{proof}

\begin{lemma}
\label{sectionslem}
Let $\mathcal R$ be an analytic sheaf over $U$ admitting a free resolution of length $4N$
\begin{equation}
\label{sects1}
\mathcal F_{4N}|_{U} \overset{\varphi_{4N-1}}{\to} \dots \overset{\varphi_{2}}{\to} \mathcal F_2|_{U} \overset{\varphi_1}{\to} \mathcal F_1|_{U} \overset{\varphi_0}{\to} \mathcal R \to 0.
\end{equation} 
If $N \geq n$, and for each $k$ the sequence of sections
\begin{equation}
\label{sects2}
\Gamma(\bar{V}_k,\mathcal F_{N}) \overset{\bar{\varphi}_{N-1}}{\to} \dots \overset{\bar{\varphi}_{2}}{\to} \Gamma(\bar{V}_k,\mathcal F_2) \overset{\bar{\varphi}_1}{\to} \Gamma(\bar{V}_k,\mathcal F_1) \overset{\bar{\varphi}_0}{\to} \Gamma(\bar{V}_k,\mathcal R) \to 0
\end{equation} 
is exact,
then the sequence of sections
\begin{equation}
\label{sects3}
\Gamma(U,\mathcal F_{N}) \overset{\bar{\varphi}_{N-1}}{\to} \dots \overset{\bar{\varphi}_{2}}{\to} \Gamma(U,\mathcal F_2) \overset{\bar{\varphi}_1}{\to} \Gamma(U,\mathcal F_1) \overset{\bar{\varphi}_0}{\to} \Gamma(U,\mathcal R) \to 0
\end{equation}
is also exact. 
\end{lemma}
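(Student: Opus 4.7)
The plan is to reduce exactness of (\ref{sects3}) to the vanishing $H^1(U,\mathcal R_j)=0$ for the kernel sheaves $\mathcal R_j:=\Imag \varphi_j$ ($1\leq j\leq N$), with the convention $\mathcal R_0:=\mathcal R$. The given free resolution (\ref{sects1}) splits into short exact sheaf sequences
$$0 \to \mathcal R_j \to \mathcal F_j \to \mathcal R_{j-1} \to 0, \quad 1 \leq j \leq N,$$
(where $\mathcal F_j\to\mathcal R_{j-1}$ is $\varphi_{j-1}$ corestricted to its image), and for each $0\leq j\leq N$ the tail of (\ref{sects1}) furnishes $\mathcal R_j$ with a free resolution
$$\mathcal F_{4N}|_U \to \cdots \to \mathcal F_{j+1}|_U \to \mathcal R_j \to 0$$
of length $4N-j\geq 3N\geq 3n$. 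Truncating this resolution to $3n$ terms and applying Lemma \ref{runge2}, every $\mathcal R_j$ with $0\leq j\leq N$ satisfies the Runge condition on $U$.

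Next I would establish $H^1(\bar V_k,\mathcal R_j)=0$ for all $k\geq 1$ and $1\leq j\leq N$. The long exact cohomology sequence on $\bar V_k$ associated to the short exact sequence above reads
$$\Gamma(\bar V_k,\mathcal F_j)\to \Gamma(\bar V_k,\mathcal R_{j-1})\to H^1(\bar V_k,\mathcal R_j)\to H^1(\bar V_k,\mathcal F_j).$$
Since $\bar V_k=\bar V_{0,k}\times \bar N_k$ with $V_{0,k}\Subset U_0$ an open polydisk and $\bar N_k$ closed in $K$, Lemma \ref{cechlem0}(2) (applied to each summand of $\mathcal F_j\cong\mathcal O^{m_j}$) gives $H^1(\bar V_k,\mathcal F_j)=0$. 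On the other hand, the hypothesis that sequence (\ref{sects2}) is exact, together with the identity $\Gamma(\bar V_k,\mathcal R_{j-1})=\Ker[\bar\varphi_{j-2}:\Gamma(\bar V_k,\mathcal F_{j-1})\to\Gamma(\bar V_k,\mathcal F_{j-2})]$ for $j\geq 2$ (and $\Gamma(\bar V_k,\mathcal R_0)=\Gamma(\bar V_k,\mathcal R)$ for $j=1$), forces the leftmost map to be surjective, so $H^1(\bar V_k,\mathcal R_j)=0$.

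Combining the Runge condition on $\mathcal R_j$ with the vanishing on each $\bar V_k$, Lemma \ref{grrunge}(2) now yields $H^1(U,\mathcal R_j)=0$ for $1\leq j\leq N$. Running the long exact sequence on $U$ once more,
$$\Gamma(U,\mathcal F_j)\to \Gamma(U,\mathcal R_{j-1})\to H^1(U,\mathcal R_j)=0,$$
so $\bar\varphi_{j-1}$ surjects onto $\Gamma(U,\mathcal R_{j-1})$ for each $1\leq j\leq N$. For $j=1$ this is surjectivity of $\bar\varphi_0:\Gamma(U,\mathcal F_1)\to \Gamma(U,\mathcal R)$, i.e., the final exactness in (\ref{sects3}); for $2\leq j\leq N$, using $\Gamma(U,\mathcal R_{j-1})=\Ker\bar\varphi_{j-2}$, it becomes $\Imag\bar\varphi_{j-1}=\Ker\bar\varphi_{j-2}$ in $\Gamma(U,\mathcal F_{j-1})$, that is, exactness of (\ref{sects3}) at $\Gamma(U,\mathcal F_{j-1})$.

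The main obstacle is purely the bookkeeping with resolution lengths: one must ensure that every kernel sheaf $\mathcal R_j$ ($0\leq j\leq N$) inherits from (\ref{sects1}) a free resolution long enough for Lemma \ref{runge2} to apply. This is exactly the purpose of the factor $4$ in the hypothesis ``length $4N$'': it guarantees $4N-j\geq 3n$ simultaneously for all $j\leq N$, leaving enough tail after each truncation. The other ingredients -- the long exact cohomology sequence, the identification of sections of kernel sheaves with kernels on sections, and the Runge-exhaustion vanishing packaged in Lemma \ref{grrunge}(2) -- are standard.
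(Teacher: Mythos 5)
Your proposal is correct and follows essentially the same route as the paper: splitting the resolution into short exact sequences for the image sheaves $\mathcal R_j$, deducing $H^1(\bar V_k,\mathcal R_j)=0$ from the hypothesis together with $H^1(\bar V_k,\mathcal F_j)=0$ (Lemma \ref{cechlem0}), verifying the Runge condition for each $\mathcal R_j$ via its inherited free resolution and Lemma \ref{runge2}, and then concluding $H^1(U,\mathcal R_j)=0$ from Lemma \ref{grrunge}(2) to get surjectivity on global sections. Your bookkeeping of the resolution lengths (and the index range $1\leq j\leq N$) matches, indeed slightly more carefully than, the paper's argument.
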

\begin{proof}
Exact sequence (\ref{sects1}) yields the collection of short exact sequences
\begin{equation}
\label{sects5}
0 \to \mathcal R_i \overset{\iota}{\to} \mathcal F_i|_U  \overset{\varphi_{i-1}}{\to} \mathcal R_{i-1} \to 0, \quad 1 \leq i \leq N-1,
\end{equation}
where $\mathcal R_i:=\Imag \varphi_{i}$ ($0 \leq i \leq N-1$), $\mathcal R_0:=\mathcal R$, and $\iota$ stands for inclusion. Recall that the section functor $\Gamma$ is left exact (see, e.g., \cite[Ch.\,3]{Gun3}), hence we have the collection of exact sequences
\begin{equation*}
0 \to \Gamma(U,\mathcal R_i) \overset{\bar{\iota}}{\to} \Gamma(U,\mathcal F_i) \overset{\bar{\varphi}_{i-1}}{\to} \Gamma(U,\mathcal R_{i-1}), \quad 1 \leq i \leq N-1.
\end{equation*}
It suffices to show that $\bar{\varphi}_{i-1}$ is surjective; this would imply that (\ref{sects3}) is exact.

It follows from the exactness of sequence (\ref{sects2}) that for each $k$ the sequences
\begin{equation}
\label{sects4}
0 \to \Gamma(\bar{V}_k,\mathcal R_i) \overset{\bar{\iota}}{\to} \Gamma(\bar{V}_k,\mathcal F_i) \overset{\bar{\varphi}_{i-1}}{\to} \Gamma(\bar{V}_k,\mathcal R_{i-1}) \to 0, \quad 1 \leq i \leq N-1,
\end{equation}
are exact. By Lemma \ref{cechlem0} $H^1(\bar{V}_k,\mathcal F_i)=0$, $1 \leq i \leq N$, for all $k \geq 1$, therefore the long exact sequence induced by (\ref{sects5}) over $\bar{V}_k$ has the form
\begin{multline}
\notag
0 \to \Gamma(\bar{V}_k,\mathcal R_i) \to \Gamma(\bar{V}_k,\mathcal F_i) \to \Gamma(\bar{V}_k,\mathcal R_{i-1}) \to \\ H^1(\bar{V}_k,\mathcal R_i) \to 0 \to H^1(\bar{V}_k,\mathcal R_{i-1}) \to H^2(\bar{V}_k,\mathcal R_i) \to \dots, \quad 1 \leq i \leq N-1.
\end{multline}
Now it follows from (\ref{sects4}) that $H^1(\bar{V}_k,\mathcal R_i)=0$ for all $k \geq 1$, $1 \leq i \leq N-1$. 

The long exact sequence induced by (\ref{sects5}) over $U$ has the form
\begin{multline}
\label{sects7}
0 \to \Gamma(U,\mathcal R_i) \to \Gamma(U,\mathcal F_i) \to \Gamma(U,\mathcal R_{i-1}) \to \\ H^1(U,\mathcal R_i) \to H^1(U,\mathcal F_i) \to H^1(U,\mathcal R_{i-1}) \to H^2(U,\mathcal R_i) \to \dots, \quad 1 \leq i \leq N-1.
\end{multline}
Each sheaf $\mathcal R_i$, $1 \leq i \leq N-1$, has a free resolution of length $3N$, hence by Lemma \ref{runge2} it satisfies the Runge condition.
It follows from Lemma \ref{grrunge} (2) that $H^1(U,\mathcal R_i)=0$ for all $1 \leq i \leq N-1$. We obtain from (\ref{sects7}) that sequences 
\begin{equation*}
0 \to \Gamma(U,\mathcal R_i) \overset{\bar{\iota}}{\to} \Gamma(U,\mathcal F_i) \overset{\bar{\varphi}_{i-1}}{\to} \Gamma(U,\mathcal R_{i-1}) \to 0, \quad 1 \leq i \leq N-1,
\end{equation*}
are exact, which implies exactness of sequence (\ref{sects3}).
\end{proof}

\begin{proof}[Proof of Proposition \ref{vanprop}]

(1) Follows from Lemmas \ref{lem3comp} and \ref{sectionslem}.

(2) According to Lemma \ref{runge2} sheaf $\mathcal R$ satisfies the Runge condition. Hence, by Lemma \ref{grrunge} we only have to show that $H^i(\bar{V}_k,\mathcal R)=0$ for all $i \geq 1$ and $k \geq 1$.

Let $\mathcal V$ be an open cover of $\bar{V}_k:=\bar{V}_{0,k} \times \bar{K}$. 
It suffices to show that given an $i$-cocycle $\sigma \in \mathcal Z^i(\mathcal V,\mathcal R)$ (see~notation before Lemma \ref{cechlem0})  there exists a refinement $\mathcal V'$ of $\mathcal V$ such that 
the image of $\sigma$ by the refinement map 
$\mathcal Z^i(\mathcal V,\mathcal R) \rightarrow \mathcal Z^i(\mathcal V',\mathcal R)$
belongs to $\mathcal B^i(\mathcal V',\mathcal R)$. 

By Lemma \ref{reflem}\,(1) there exists a finite refinement $\mathcal U=\{U_\alpha\}$, $U_\alpha:=U_{0,l} \times L_j$, $\alpha=(l,j)$, of cover $\mathcal V$ of class ($P$) (see~Definition \ref{refdef}).
Let $s=s_{\mathcal U}$ be the number of elements of $\mathcal U$ and let $N \geq \max\{n,s\}$ be the length of the free resolution of $\mathcal R$ over $U$.
By the definition of an open cover of class $(P)$, a section of sheaf $\mathcal R$ over an element $U_{\alpha}$ of $\mathcal U$ admits an extension to $\tilde{U}_\alpha=\tilde{U}_{0,l} \times L_j$, where $\tilde{U}_{0,l}=\tilde{U}_0^1 \times \dots \times \tilde{U}_0^n \Subset U_0$ is a product domain such that each $\tilde{U}_0^i \subset \mathbb C$ ($1 \leq i \leq n$) is simply connected and has smooth boundary, and $U_{0,l}=\bar{V}_{0,k} \cap \tilde{U}_{0,l}$.
By part (1) of the proposition, over each $U_\alpha$ the sequence of sections $U_\alpha$ corresponding to (\ref{frseq}) is exact (there we can take product domain $\tilde{U}_{0,l}$ instead of polydisk $U_0$).
Hence, we have a sequence of cochain complexes
\begin{equation*}
\mathcal C^{\cdot}(\mathcal U,\mathcal F_{N}) \to \dots \to \mathcal C^{\cdot}(\mathcal U,\mathcal F_{1}) \to \mathcal C^{\cdot}(\mathcal U,\mathcal R) \to 0.
\end{equation*}
By Lemma \ref{reflem}(2) there exists a refinement $\mathcal U'$ of cover $\mathcal U$ of class $(P)$ of the same cardinality.
We have a commutative diagram with exact rows
\begin{equation*}
\bfig
\node a1(0,0)[\mathcal C^{\cdot}(\mathcal U',\mathcal F_{N})]
\node a2(500,0)[\dots]
\node a3(1000,0)[\mathcal C^{\cdot}(\mathcal U',\mathcal F_{1})]
\node a4(1600,0)[\mathcal C^{\cdot}(\mathcal U',\mathcal R) ]
\node a5(2050,0)[0]
\node b1(0,500)[\mathcal C^{\cdot}(\mathcal U,\mathcal F_{N})]
\node b2(500,500)[\dots]
\node b3(1000,500)[\mathcal C^{\cdot}(\mathcal U,\mathcal F_{1})]
\node b4(1600,500)[\mathcal C^{\cdot}(\mathcal U,\mathcal R) ]
\node b5(2050,500)[0]
\arrow[a1`a2;]
\arrow[a2`a3;]
\arrow[a3`a4;]
\arrow[a4`a5;]
\arrow[b1`b2;]
\arrow[b2`b3;]
\arrow[b3`b4;]
\arrow[b4`b5;]
\arrow[b1`a1;]
\arrow[b3`a3;]
\arrow[b4`a4;]
\efig
\end{equation*}
or, equivalently, the collection of commutative diagrams with exact rows
\begin{equation*}
\bfig
\node a1(0,0)[0]
\node a2(500,0)[\mathcal C^{\cdot}(\mathcal U',\mathcal R_i)]
\node a3(1200,0)[\mathcal C^{\cdot}(\mathcal U',\mathcal F_{i})]
\node a4(1950,0)[\mathcal C^{\cdot}(\mathcal U',\mathcal R_{i-1})]
\node a5(2500,0)[0,]
\node b1(0,500)[0]
\node b2(500,500)[\mathcal C^{\cdot}(\mathcal U,\mathcal R_i)]
\node b3(1200,500)[\mathcal C^{\cdot}(\mathcal U,\mathcal F_{i})]
\node b4(1950,500)[\mathcal C^{\cdot}(\mathcal U,\mathcal R_{i-1}) ]
\node b5(2500,500)[0]
\arrow[a1`a2;]
\arrow[a2`a3;]
\arrow[a3`a4;]
\arrow[a4`a5;]
\arrow[b1`b2;]
\arrow[b2`b3;]
\arrow[b3`b4;]
\arrow[b4`b5;]
\arrow[b2`a2;]
\arrow[b3`a3;]
\arrow[b4`a4;]
\efig
\end{equation*}
where $\mathcal R_i:=\Imag \varphi_{i}$ ($0 \leq i \leq N-1$), $\mathcal R_0:=\mathcal R$. 
In turn, each row yields the long exact sequence
\begin{multline*}
0 \to \Gamma(\bar{V}_k,\mathcal R_i) \to \Gamma(\bar{V}_k,\mathcal F_i) \to \Gamma(\bar{V}_k,\mathcal R_{i-1}) \to \\
H^1(\mathcal U,\mathcal R_i) \to H^1(\mathcal U,\mathcal F_i) \overset{\varphi^1_{i-1}}{\to} H^1(\mathcal U,\mathcal R_{i-1}) \overset{\psi_i^2}{\to} H^2(\mathcal U,\mathcal R_i) \to \dots, \quad 1 \leq i \leq N-1,
\end{multline*}
(and a similar one for $\mathcal U'$),
where $H^l(\mathcal U,\mathcal R_i):=\mathcal Z^l(\mathcal U,\mathcal R_i)/\mathcal B^l(\mathcal U,\mathcal R_i)$ are the \v{C}ech cohomology groups corresponding to cover $\mathcal U$. These sequences form the commutative diagram
\begin{equation*}
\bfig
\node a1(0,0)[\dots]
\node a2(500,0)[H^l(\mathcal U',\mathcal R_i)]
\node a3(1200,0)[H^l(\mathcal U',\mathcal F_i)]
\node a4(1950,0)[H^l(\mathcal U',\mathcal R_{i-1})]
\node a5(2700,0)[H^{l+1}(\mathcal U',\mathcal R_i)]
\node a6(3300,0)[\dots]
\node b1(0,500)[\dots]
\node b2(500,500)[H^l(\mathcal U,\mathcal R_i)]
\node b3(1200,500)[H^l(\mathcal U,\mathcal F_i)]
\node b4(1950,500)[H^l(\mathcal U,\mathcal R_{i-1})]
\node b5(2700,500)[H^{l+1}(\mathcal U,\mathcal R_i)]
\node b6(3300,500)[\dots]
\arrow[a1`a2;]
\arrow[a2`a3;]
\arrow[a3`a4;(\varphi^{l}_{i-1})' ]
\arrow[a4`a5;(\psi^{l+1}_{i})']
\arrow[a5`a6;]
\arrow[b1`b2;]
\arrow[b2`b3;]
\arrow[b3`b4;\varphi^{l}_{i-1} ]
\arrow[b4`b5;\psi^{ l+1}_{i}]
\arrow[b5`b6;]
\arrow[b2`a2;]
\arrow[b3`a3;\iota^l_i]
\arrow[b4`a4;\gamma^l_{i-1}]
\arrow[b5`a5;\gamma^{l+1}_{i}]
\efig
\end{equation*}
where $\iota_i^l$, $\gamma_{i-1}^l$, $\gamma_{i}^{l+1}$ are the corresponding refinement maps.

We have to show that given $\sigma \in H^l(\mathcal U,\mathcal R)$, $l \geq 1$, there exists a refinement $\mathcal W$ of cover $\mathcal U$ such that the image of $\sigma$ in $H^l(\mathcal W,\mathcal R)$ is zero. We construct this refinement using the following algorithm.

Suppose that there exists a non-zero $\sigma \in H^l(\mathcal U,\mathcal R_{i-1})$. 
Consider the following case:

(*) $\psi^{l+1} _{i}(\sigma)=0$. Then there exists $\eta \in H^l(\mathcal U,\mathcal F_i) $ such that $\sigma=\varphi^{l}_{i-1}(\eta)$. We have $\gamma^l_{i-1}(\sigma)=(\varphi^{l}_{i-1})'\bigl(\iota^l_i(\eta)\bigr)$. By Lemma \ref{cechlem0} $\iota^l_i\bigl(H^l(\mathcal U,\mathcal F_i)\bigr)=0$, hence the image of $\sigma$ by the refinement map $\gamma^l_{i-1}(\sigma)=0\in H^l(\mathcal U',\mathcal R_{i-1})$. 


We start with $\mathcal R_0=\mathcal R$ assuming that there exists a non-zero $\sigma \in H^{l}(\mathcal U,\mathcal R)$, $l \geq 1$. If case (*) occurs we set $\mathcal W:=\mathcal U'$. For otherwise,
there exists $2\le k\le s$ such that $(\psi_{k}^{l+k}\circ\dots\circ\psi_1^{l+1})(\sigma)=0\in H^{l+k}(\mathcal U,\mathcal R_k)$. (Indeed, assuming the opposite we obtain 
a non-zero element of $H^{l+s}(\mathcal U,\mathcal R_s)$; however, since the cardinality of $\mathcal U$ is $s$, we have $H^{l+s}(\mathcal U,\mathcal R_s)=0$, a contradiction.) Thus case (*) occurs for $\tilde \sigma:=(\psi_{k-1}^{l+k-1}\circ\dots\circ \psi_1^{l+1})(\sigma)$ instead of $\sigma$ which implies that the image of  $\tilde\sigma$ under the refinement map $H^{l+k-1}(\mathcal U,\mathcal R_{k-1})\rightarrow H^{l+k-1}(\mathcal U',\mathcal R_{k-1})$ is zero. Further, starting with cover $\mathcal U'$ (instead of $\mathcal U$)  and applying consequently case (*) to images of $(\psi_{p}^{l+p}\circ\dots\circ\psi_1^{l+1})(\sigma)$, $p=k-2,\dots, 1$, under the corresponding refinement maps we finally obtain the required refinement $\mathcal W$ of $\mathcal U$ such that the image of $\sigma$ under the refinement map $H^{l}(\mathcal U,\mathcal R_0)\rightarrow H^{l}(\mathcal W,\mathcal R_0)$ is zero.
\end{proof}

\subsection{Proof of Proposition \ref{cohcor}}
\label{cohproof}

The proof is based on the following lemma.

\begin{lemma}
\label{joinprop}
Let $U_0 \Subset \mathbb C^n$ be an open polydisk, and $K_1$, $K_2 \in \mathfrak Q$. Let $\mathcal R$ be an analytic sheaf over $U_0 \times (K_1 \cup K_2)$. Let $x_0 \in U_0$.

Suppose that for every $N \geq 1$ sheaf $\mathcal R$ admits free resolutions of length $N$ over $U_0 \times K_1$ and $U_0 \times K_2$. Then for any open subsets $L_1 \Subset K_1$, $L_2 \Subset K_2$ such that $L_i \in \mathfrak Q$ ($i=1,2$) there exists an open neighbourhood $V_0 \subset U_0$ of $x_0$ such that for every $N \geq 1$ sheaf $\mathcal R$ admits a free resolution of length $N$ over $V_0 \times (L_1 \cup L_2)$.
\end{lemma}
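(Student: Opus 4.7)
The plan is to paste the given free resolutions of $\mathcal R$ over $U_0 \times K_1$ and $U_0 \times K_2$ by means of a continuous partition of unity on $\hat{G}_{\mathfrak a}$. The key observation is that any $\rho \in C(\hat{G}_{\mathfrak a})$ defines a holomorphic function on $V_0 \times (L_1 \cup L_2)$: being constant in the $z$--variable, its pullback by $\tilde j = \Id \times j$ is trivially holomorphic on $V_0 \times G$. Since resolution lengths on the $K_i$--side can be chosen arbitrarily large by hypothesis, Proposition \ref{vanprop}(1) will be available to produce global sections whenever needed. The construction proceeds by induction on $N$.

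\textbf{Setup and first surjection.} Insert sets $L_i \Subset M_i \Subset N_i \Subset K_i$ in $\mathfrak Q$ (via Lemma \ref{exhlem}(3)) and, by normality of the compact Hausdorff space $\hat{G}_{\mathfrak a}$, pick $\rho_1,\rho_2 \in C(\hat{G}_{\mathfrak a})$ with $0 \leq \rho_i \leq 1$, $\supp \rho_i \subset N_i$, and $\rho_1+\rho_2 \equiv 1$ on a neighbourhood of $\bar L_1 \cup \bar L_2$. Fix an open polydisk $V_0 \subset U_0$ around $x_0$. A length-$4n$ free resolution of $\mathcal R$ over each $U_0 \times K_i$ yields, via Proposition \ref{vanprop}(1), global generators $h^i_1,\dots,h^i_{m_i} \in \Gamma(U_0 \times K_i,\mathcal R)$. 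Set $\tilde s^i_k := \rho_i h^i_k$, extended by zero outside $\supp \rho_i$; these define a homomorphism $\psi: \mathcal O^{m_1+m_2}|_{V_0 \times (L_1 \cup L_2)} \to \mathcal R|_{V_0 \times (L_1 \cup L_2)}$. At each $\eta \in L_1 \cup L_2$ at least one $\rho_i(\eta)>0$; without loss of generality $\rho_1(\eta)>0$. Then $\eta \in K_1$ and $\rho_1^{-1}$ is holomorphic on a neighbourhood of $\eta$, so $h^1_k = \rho_1^{-1} \tilde s^1_k$ there. Hence the generators $\{h^1_k\}$ of $\mathcal R$ near $\eta$ lie in the image of $\psi$, and $\psi$ is surjective.

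\textbf{Inductive step.} For longer resolutions, I would analyse $\mathcal K := \ker \psi$ over each $V_0 \times L_i$. Over $V_0 \times L_1$, Proposition \ref{vanprop}(1) provides global liftings $(q_{kj})_j \in \Gamma(V_0 \times L_1,\mathcal O^{m_1})$ with $\sum_j q_{kj} h^1_j = \tilde s^2_k$, so the map $T_1 := (\rho_1 I,Q) : \mathcal O^{m_1+m_2}|_{V_0 \times L_1} \to \mathcal O^{m_1}|_{V_0 \times L_1}$ factors $\psi|_{V_0 \times L_1} = \varphi^1_0 \circ T_1$ and produces a short exact sequence
\begin{equation*}
0 \to \ker T_1 \to \mathcal K|_{V_0 \times L_1} \to \ker \varphi^1_0|_{V_0 \times L_1} \to 0.
\end{equation*}
Here $\ker \varphi^1_0$ inherits a length-$(4n-1)$ resolution from $U_0 \times K_1$, while $\ker T_1$---the kernel of an explicit matrix of holomorphic functions on a polydisk-times-basic-set---admits a long free resolution by a syzygy-type argument based again on Proposition \ref{vanprop}. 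The situation on $V_0 \times L_2$ is symmetric. Applying the inductive hypothesis to $\mathcal K$ (with $(K_1,K_2)$ replaced by $(L_1,L_2)$ and possibly shrinking $V_0$) then yields a length-$(N-1)$ resolution of $\mathcal K$ over some $V_0' \times (L_1' \cup L_2')$, extending the one-step resolution of $\mathcal R$ to length $N$.

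\textbf{Main obstacle.} The delicate part is the syzygy step for $\ker T_1$ together with the bookkeeping of resolution lengths: each restriction, lifting, and kernel operation consumes some of the resolution buffer, so one must start on the $K_i$--side with length large enough (scaling with $N$) that enough buffer survives all inductive steps. A cleaner alternative, closer in spirit to the classical proof of Cartan's coherence lemma, would be to build the entire length-$N$ resolution in a single pass by applying the partition-of-unity pasting to each successive syzygy sheaf simultaneously, rather than peeling off one step at a time.
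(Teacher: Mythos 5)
Your opening move -- producing a single surjection $\psi:\mathcal O^{m_1+m_2}|_{V_0\times(L_1\cup L_2)}\to\mathcal R\to 0$ by multiplying global generators over $U_0\times K_i$ with a continuous partition of unity $\rho_1,\rho_2$ on the fibre -- is sound and is essentially the technique the paper itself uses elsewhere (cf.\ the proof of Lemma \ref{surjlem} and of Theorem \ref{cechtriv}, case (2)). But the inductive step contains two genuine gaps. First, the short exact sequence $0\to\ker T_1\to\mathcal K|_{V_0\times L_1}\to\ker\varphi_0^1|_{V_0\times L_1}\to 0$ is not justified: $T_1=(\rho_1 I,Q)$ maps $\mathcal K$ \emph{into} $\ker\varphi_0^1$, but nothing forces it to surject onto it. At points of $L_1\cap N_2$ where $\rho_2=1$ and hence $\rho_1=0$, the block $\rho_1 I$ dies and surjectivity of $T_1$ (hence right-exactness of your sequence) fails. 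Second, and more seriously, the assertion that $\ker T_1$ ``admits a long free resolution by a syzygy-type argument based again on Proposition \ref{vanprop}'' has no support in this framework: Proposition \ref{vanprop} \emph{assumes} a free resolution and derives exactness of sections; it never manufactures a free resolution of the kernel of a given holomorphic matrix. Since the fibre $\hat{G}_{\mathfrak a}$ is infinite-dimensional there is no Oka coherence lemma here (the paper says so explicitly), so kernels of arbitrary analytic homomorphisms are not automatically resolvable. The only mechanism available for resolving a kernel is the Three Lemma (Lemma \ref{lem7}), which requires the short exact sequence to be \emph{completely} exact and two of the three sheaves to already carry long resolutions -- and each application costs roughly $n+1$ in length, which is the bookkeeping you defer. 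Even for your first kernel $\mathcal K$, verifying complete exactness of $0\to\mathcal K\to\mathcal O^{m_1+m_2}\to\mathcal R\to 0$ over $V_0\times L_1$ runs into the problem that the lifting of $\rho_2 f$ requires surjectivity of sections over sets like $V_0\times(L_1\cap N_2)$, which are not of the product form (polydisk)\,$\times$\,(element of $\mathfrak Q$) to which Proposition \ref{vanprop}(1) applies.

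The paper avoids all of this by a multiplicative, Cousin-II--type pasting rather than an additive one. Using complete exactness (Lemma \ref{lem65}) it factors $\alpha_1=\alpha_2\psi$ and $\alpha_2=\alpha_1\varphi$ on the overlap, assembles the invertible matrix $H=\bigl(\begin{smallmatrix}I&\varphi\\0&I\end{smallmatrix}\bigr)\bigl(\begin{smallmatrix}I&0\\\psi&I\end{smallmatrix}\bigr)^{-1}$ with $\alpha_1\pi_1H=\alpha_2\pi_2$, which lies in the identity component of $\mathcal O(U_1\cap U_2,GL_k(\mathbb C))$, and splits it as $H=h_1h_2$ over slightly shrunk sets (Corollary \ref{cartanlem}, proved via a logarithm and a Tietze--Urysohn extension of Taylor coefficients). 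The modified surjections $\alpha_1\pi_1h_1$ and $\alpha_2\pi_2h_2^{-1}$ then literally agree on the overlap and glue to a map $\alpha$ whose restriction to each piece differs from the original $\alpha_i\pi_i$ only by an invertible change of frame; consequently $0\to\ker\alpha\to\mathcal O^{k_1+k_2}\to\mathcal R\to 0$ is completely exact over each piece and the Three Lemma hands $\ker\alpha$ a (shorter) free resolution over each piece, allowing the construction to be iterated $N-1$ times over a shrinking family $V_i^m$. If you want to rescue the partition-of-unity route, you would still need to route the kernel analysis through complete exactness and the Three Lemma rather than through a nonexistent syzygy theorem.
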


We prove Lemma \ref{joinprop} in the next subsection but now we use it in the proof of the proposition. 

\medskip

\begin{proof}[Proof of Proposition \ref{cohcor}] Let $U_0 \Subset \mathbb C^n$ be an open polydisk, $x_0 \in U_0$. Since sets $\bar{p}^{-1}(U_0)$ and $U_0 \times \hat{G}_{\mathfrak a}$ are biholomorphic (see~subsection \ref{charts}), it suffices to prove the proposition for a coherent sheaf $\mathcal A$ over $U_0 \times \hat{G}_{\mathfrak a}$.

By the definition of a coherent sheaf (see \eqref{coh0}), there exists a finite open cover of $\{x_0\} \times \hat{G}_{\mathfrak a}$ by sets $W_{0,i} \times L_i$, where $W_{0,i} \subset U_0$ is an open neighbourhood of $x_0$, $\cup_i L_i=\hat{G}_{\mathfrak a}$,
and for every $N \geq 1$ sheaf $\mathcal A$ admits free resolutions of  length $N$ over each $W_{0,i} \times L_i$.

By Lemma \ref{coverlem} there exists a collection of finite refinements $$\mathcal L^k(m)=\{L_j^k: L_j^k \in \mathfrak Q,~1 \leq j \leq m\}, \quad k \geq 1,$$ of open cover $\{L_i\}$ such that $L_j^{k+1} \Subset L_j^k$ for all $1 \leq j \leq m$, $k \geq 1$. 

Let $k=1$. We apply Lemma \ref{joinprop} to sheaf $\mathcal A$ with $K_1:=L_{m-1}^1$, $K_2:=L_{m}^1$, $L_1:=L_{m-1}^2$, $L_2:=L_{m-1}^2$ to obtain an open neighbourhood $V_0:=V_{0,m} \subset \cap_i W_{0,i}$ of $x_0$ such that for each $N \geq 1$ sheaf $\mathcal A$ has a free resolution of length $N$ over $V_{0,m} \times (L_{m-1}^2 \cup L_{m-1}^2)$. 

Next, we set $$\mathcal L^k(m-1):=\{L_1^k,\dots, L_{m-2}^k, \tilde L_{m-1}^k\},\quad \tilde L_{m-1}^k:=L_{m-1}^k \cup L_{m}^k,\quad k \geq 2.$$ 

Taking $k=2$ we  apply an argument similar to the above to the cover $\mathcal L^2(m-1)$ of $\hat{G}_{\mathfrak a}$ obtaining that for each $N \geq 1$ sheaf $\mathcal A$ has a free resolution of length $N$ over $V_{0,m-1} \times (L_{m-2}^3 \cup \tilde L_{m-1}^3)$ for some open neighbourhood $V_{0,m-1} \subset V_{0,m}$ of $x_0$. Then we define 
$$\mathcal L^k(m-2):=\{L_1^k,\dots,L_{m-3}^k, \tilde L_{m-2}^k\},\quad \tilde L_{m-2}^k:=L_{m-2}^k \cup \tilde L_{m-1}^k,\quad k \geq 3,\ \text{etc.}$$ 

After $m-1$ steps we obtain that there exists an open neighbourhood $V_{0,1} \subset \cap_i W_{0,i}$ of $x_0$ such that for each $N \geq 1$ sheaf $\mathcal A$ has a free resolution over $V_{0,1} \times \hat{G}_{\mathfrak a}$, as required. 
\end{proof}

\subsection{Proof of Lemma \ref{joinprop}}

We will use the following notation.

Let $M_{l \times k}(\mathbb C)$ be the space of $l \times k$ matrices $C=(c_{ij})$ with entries $c_{ij} \in \mathbb C$ endowed with norm $|C|:=\max\{|c_{ij}|\}_{i,j=1}^{l,k}$. We set $M_{k}(\mathbb C):=M_{k \times k}(\mathbb C)$.

 Let $GL_k(\mathbb C) \subset M_{k}(\mathbb C)$ be the group of invertible matrices.
We denote by $I=I_k \in GL_k(\mathbb C)$ the identity matrix.

Let $U_0 \subset \mathbb C^n$ be an open polydisk, $K \in \mathfrak Q$; set $U:=U_0 \times K$.
The space $\mathcal O(U,M_k(\mathbb C))$ of holomorphic $M_k(\mathbb C)$-valued functions is endowed with norm
\begin{equation*}
\|F\|_U:=\sup_{x \in U} |F(x)|, \quad F \in \mathcal O(U,M_k(\mathbb C)).
\end{equation*}
The subset $\mathcal O(U,GL_k(\mathbb C)) \subset \mathcal O(U,M_k(\mathbb C))$ of holomorphic $GL_k(\mathbb C)$-valued maps on $U$ has the induced topology of uniform convergence on compact subsets of $U$ (see~Lemma \ref{exhlem}(2)).

The identity map $(z,\omega) \rightarrow I$, $(z,\omega) \in U$, will be denoted also by $I$.

\begin{lemma}
\label{cartanlem0}
Let
 $U':=U_0 \times K'$, $U'':=U_0 \times K''$, where $K'$, $K'' \in \mathfrak Q$. 
 Suppose that $H \in \mathcal O\bigl(U' \cap U'',GL_k(\mathbb C) \bigr)$ belongs to the connected component of the identity map $I$ in $\mathcal O(U' \cap U'',GL_k(\mathbb C))$. 
 
 Then for any open polydisk $V_0 \Subset U_0$ and open subsets $L' \Subset K'$, $L'' \Subset K''$ there exists a  function $H' \in \mathcal O\bigl(V' \cup V'',GL_k(\mathbb C)\bigr)$, where $V':=V_0 \times L'$, $V'':=V_0 \times L''$,
such that  
$H'|_{V' \cap V''}=H|_{V' \cap V''}$.
\end{lemma}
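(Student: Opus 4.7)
The plan is a two-step Cartan-lemma style argument. I will first use path-connectedness to reduce the problem to $H$ being close to $I$, and then construct the required extension by interpolating $H$ with the identity matrix using a continuous cut-off in the $\hat{G}_{\mathfrak a}$-direction.

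Since $\mathcal O\bigl(U' \cap U'', GL_k(\mathbb C)\bigr)$, as an open subset of the Fr\'{e}chet space of matrix-valued holomorphic functions on $U' \cap U''$, is locally path-connected, the assumption on $H$ furnishes a continuous path $\{H(t)\}_{t \in [0,1]}$ with $H(0) = I$, $H(1) = H$. Fixing a compact set $Q \Subset U' \cap U''$ that contains $\overline{V_0 \times (L' \cap L'')}$ in its interior, uniform continuity of the path with respect to the sup-norm on $Q$ lets us choose a partition $0 = t_0 < \dots < t_N = 1$ so that the factors $H_i := H(t_i) H(t_{i-1})^{-1}$ satisfy $\|H_i - I\|_Q < \epsilon$ for any prescribed $\epsilon > 0$, and $H = H_N H_{N-1} \cdots H_1$. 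Since the extension property is preserved under multiplication, it suffices to build $H_i' \in \mathcal O\bigl(V' \cup V'', GL_k(\mathbb C)\bigr)$ extending each $H_i|_{V' \cap V''}$; then $H' := H_N' \cdots H_1'$ extends $H$.

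Assume from here that $\|H - I\|_{V_0 \times \tilde L} < \epsilon$ for a small $\epsilon > 0$, where $\tilde L$ is an open subset of $\hat{G}_{\mathfrak a}$ with $\overline{L' \cap L''} \subset \tilde L \Subset K' \cap K''$; such $\tilde L$ exists because $\hat{G}_{\mathfrak a}$ is compact Hausdorff (hence normal) and $\overline{L' \cap L''} \subset \overline{L'} \cap \overline{L''} \subset K' \cap K''$. By Urysohn's lemma, pick $\rho \in C(\hat{G}_{\mathfrak a})$ with $0 \leq \rho \leq 1$, $\rho \equiv 1$ on $\overline{L' \cap L''}$, and $\rho \equiv 0$ outside $\tilde L$. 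Put
\[
H_{V'}(z,\omega) := \rho(\omega) H(z,\omega) + \bigl(1 - \rho(\omega)\bigr) I
\]
on $V_0 \times (L' \cap \tilde L)$, and extend by $H_{V'}(z,\omega) := I$ on $V_0 \times (L' \setminus \tilde L)$. The vanishing of $\rho$ outside $\tilde L$ and boundedness of $H$ on $V_0 \times \tilde L$ make $H_{V'}$ continuous across $\partial \tilde L \cap L'$; the bound $\|H_{V'} - I\| \leq \|H - I\| < \epsilon < 1$ secures $GL_k(\mathbb C)$-values; and because $\rho = \rho(\omega)$ is constant in $z$, the map $z \mapsto H_{V'}(z, j(g))$ is a holomorphic interpolation between $H(\cdot, j(g))$ and $I$ for each $g \in G$ with $j(g) \in L'$, so $H_{V'} \in \mathcal O\bigl(V', GL_k(\mathbb C)\bigr)$ in the sense of Section \ref{charts}. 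On $V' \cap V'' = V_0 \times (L' \cap L'')$ we have $\rho \equiv 1$, so $H_{V'} = H$ exactly on the intersection. A symmetric construction yields $H_{V''} \in \mathcal O\bigl(V'', GL_k(\mathbb C)\bigr)$ also agreeing with $H$ on $V' \cap V''$.

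Finally, set $H' := H_{V'}$ on $V'$ and $H' := H_{V''}$ on $V''$; consistency on the overlap is automatic since both restrictions equal $H$. This produces the desired $H' \in \mathcal O\bigl(V' \cup V'', GL_k(\mathbb C)\bigr)$ with $H'|_{V' \cap V''} = H|_{V' \cap V''}$. The chief difficulty is the second step — extending across the region $V_0 \times (L' \setminus \tilde L)$ where $H$ itself is not defined. The smallness of $H - I$ (needed to keep $H_{V'}$ invertible), the normality of $\hat{G}_{\mathfrak a}$ (needed to supply the cut-off $\rho$), and the fact that $\rho$ is constant in $z$ (so the interpolation remains $z$-holomorphic) together resolve it.
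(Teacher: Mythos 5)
Your proof is correct, and while the first step (splitting $H$ along a path from $I$ into finitely many factors, each uniformly close to $I$ on a suitable compact set, and extending each factor separately) coincides with the paper's, your treatment of the near-identity case is genuinely different. The paper takes $F=\log H$, expands $F(z,\cdot)$ as a Taylor series in $z$ with coefficients in $C(\bar L'\cap \bar L'',M_k(\mathbb C))$, extends each coefficient to $\bar L'\cup\bar L''$ by Tietze--Urysohn without increasing its sup-norm (so the extended series still converges), and sets $H'=\exp(F')$, so that invertibility of the extension is automatic. You instead interpolate $H$ directly with $I$ by a cut-off $\rho(\omega)$ equal to $1$ on $\overline{L'\cap L''}$ and vanishing off a slightly larger $\tilde L\Subset K'\cap K''$; this avoids logarithms and Taylor expansions entirely (in particular you never use that $V_0$ is a polydisk, which the paper's series argument does), at the price of having to verify invertibility of the convex combination, which your smallness reduction supplies --- though with the paper's max-entry norm on $M_k(\mathbb C)$ you should take $\varepsilon<1/k$ rather than $\varepsilon<1$. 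Two small tidying points: choose $\tilde L$ before $Q$, since the factors must be close to $I$ on $\overline{V_0}\times\overline{\tilde L}$ and not merely on $\overline{V_0\times(L'\cap L'')}$ (taking $Q:=\overline{V_0}\times\overline{\tilde L}$ works); and your $H_{V'}$ and $H_{V''}$ are visibly restrictions of a single function on $V_0\times(L'\cup L'')$, so the final gluing is immediate. The tacit passage from ``connected component'' to ``continuous path'' is made in the paper's proof as well, and in the only application (Lemma \ref{joinprop}) the path is exhibited explicitly, so it is not a defect of your argument specifically.
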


\begin{proof} 
We may assume without loss of generality that polydisks $V_0$, $U_0$ are centered at the origin $0 \in \mathbb C^n$.

First, suppose that $\|I-H\|_{V' \cap V''}<\frac{1}{2}$. Then we can define
$F:=\log H=-\sum_{j=1}^\infty\frac{(I-H)^j}{j} \in \mathcal O\bigl(V' \cap V'',M_k(\mathbb C) \bigr) \cap C\bigl(\bar{V}' \cap \bar{V}'',M_k(\mathbb C) \bigr)$.
Let us show that there exists a function $F' \in \mathcal O\bigl(V' \cup V'',M_k(\mathbb C)$ such that
$F'|_{V' \cap V''}=F|_{V' \cap V''}$. Indeed, we can expand the $C(\bar{L}' \cap \bar{L}'',M_k(\mathbb C))$-valued holomorphic function $F(z,\cdot)$ in the Taylor series about $0$,
\begin{equation*}
F(z,\eta)=\sum_{m=0}^\infty b_m(\eta)z^m, \quad z \in \tilde V_0, \quad \eta \in \bar{L}' \cap \bar{L}'',
\end{equation*}
where $b_m \in C(\bar{L}' \cap \bar{L}'',M_k(\mathbb C) )$ and $\tilde V_0$ is an open neighbourhood of $\bar V_0$.
Note that $\bar{L}' \cup \bar{L}''$ is compact (as a closed subspace of compact space $\hat{G}_{\mathfrak a}$), and hence is normal. 
Therefore, using the Tietze--Urysohn extension theorem, we can extend each $b_m$ to a function $\tilde{b}_m \in C(\bar{L}' \cup \bar{L}'',M_k(\mathbb C))$ such that $\sup_{\omega \in \bar{L}' \cap \bar{L}''}|b_m(\omega)|=\sup_{\omega \in \bar{L}' \cup \bar{L}''}|\tilde{b}_m(\omega)|$. Then we
define 
$$F'(z,\omega):=\sum_{m=0}^\infty \tilde{b}_m(\omega)z^m, \quad z \in V_0, \quad \omega \in L' \cup L''.$$
(Since the above series converges uniformly on relatively compact subsets of $\tilde V_0$, $F' \in \mathcal O\bigl(V' \cup V'',M_k(\mathbb C)$ and satisfies the required condition.) 

Now, we set
$H':=\exp(F') \in \mathcal O\bigl(V' \cup V'',GL_k(\mathbb C)\bigr)$ completing the proof of the lemma in this case.

Further, let $H \in \mathcal O\bigl(U' \cap U'',GL_k(\mathbb C) \bigr)$ be an arbitrary $GL_k(\mathbb C)$-valued bounded holomorphic map belonging to the connected component of the identity map $I$ of $\mathcal O\bigl(U' \cap U'',GL_k(\mathbb C) \bigr)$. 


Let us show that $H|_{V' \cap V''}$ can be presented in the form
\begin{equation}
\label{pr1}
H|_{V' \cap V''}=H^1 \cdots H^l,
\end{equation}
where each $H^i \in \mathcal O(V' \cap V'',GL_k(\mathbb C))$, $1 \leq i \leq l$, satisfies
\begin{equation}
\label{pr2}
\|I-H^i\|_{V' \cap V''}<\frac{1}{2}.
\end{equation}
In fact, since $H$ belongs to the connected component of the identity map $I$, there exists a continuous path $H_t \in \mathcal O\bigl(U' \cap U'',GL_k(\mathbb C) \bigr)$ ($t \in [0,1]$) such that $H_0=I$, $H_1=H$.
Consider a partition $0=t_0<t_1<\dots<t_l=1$ of the unit interval $[0,1]$, and define
$$H^i(z,\omega)=H^{-1}_{t_{i-1}}(z,\omega)H_{t_i}(z,\omega), \quad (z,\omega) \in V' \cap V'', \quad 1 \leq i \leq l,$$
which gives us identity (\ref{pr1}).
Provided that $\max_{1 \leq i \leq l-1}|t_{i+1}-t_i|$ is sufficiently small, inequality (\ref{pr2}) holds for all $1 \leq i \leq l$.

Now, according to the first case there exist $(H^{i})' \in \mathcal O(V' \cup V'',GL_k(\mathbb C))$ such that $(H^{i})'|_{V' \cap V''}=H^i|_{V' \cap V''}$. We define $H':=(H^{1})'\cdots (H^l)'$.
\end{proof}

\begin{corollary}
\label{cartanlem}
 
In the notation of Lemma \ref{cartanlem0}, for any open polydisk $V_0 \Subset U_0$ and open subsets $L' \Subset K'$, $L'' \Subset K''$ there exist functions $h' \in \mathcal O\bigl(V',GL_k(\mathbb C)\bigr)$, $h'' \in \mathcal O\bigl(V'',GL_k(\mathbb C)\bigr)$ such that
$$H=h'h'', \quad \text{ on }V' \cap V''.$$
\end{corollary}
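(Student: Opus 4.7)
The plan is to reduce Corollary \ref{cartanlem} directly to Lemma \ref{cartanlem0}; the decomposition $H=h'h''$ will drop out by taking $h''$ trivial. Given $H\in\mathcal O\bigl(U'\cap U'',GL_k(\mathbb C)\bigr)$ lying in the connected component of the identity, Lemma \ref{cartanlem0} (applied with the same polydisk $V_0$ and the same $L'\Subset K'$, $L''\Subset K''$) yields a holomorphic map $H'\in\mathcal O\bigl(V'\cup V'',GL_k(\mathbb C)\bigr)$ with $H'|_{V'\cap V''}=H|_{V'\cap V''}$. This extension is the key input.

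With $H'$ in hand I would set $h':=H'|_{V'}$ and take $h'':=I_k$, the constant identity matrix viewed as an element of $\mathcal O\bigl(V'',GL_k(\mathbb C)\bigr)$. Then $h'\in\mathcal O\bigl(V',GL_k(\mathbb C)\bigr)$ by restriction, $h''$ is invertible and holomorphic by inspection, and on the intersection
\[
h'(z,\omega)\,h''(z,\omega)\;=\;H'(z,\omega)\cdot I_k\;=\;H(z,\omega),\qquad (z,\omega)\in V'\cap V'',
\]
which is precisely the sought factorization.

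All the work has been discharged inside Lemma \ref{cartanlem0}, whose proof splits $H$ into a product $H=H^1\cdots H^l$ of factors each within $\tfrac12$ of the identity, takes logarithms, and extends the resulting Taylor coefficients off the overlap via the Tietze--Urysohn theorem on the compact (hence normal) fibre $\hat G_{\mathfrak a}$. The corollary is essentially a cosmetic reformulation: the one-sided extension statement of Lemma \ref{cartanlem0} is repackaged in the multiplicative Cousin form $H=h'h''$ that will be needed downstream for the sheaf-pasting argument in Lemma \ref{joinprop}. Consequently there is no genuine obstacle to overcome, and the proof can be kept to essentially a single sentence.
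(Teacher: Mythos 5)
Your proof is correct and is exactly the paper's own argument: apply Lemma \ref{cartanlem0} to obtain the extension $H'$ on $V'\cup V''$, then set $h':=H'|_{V'}$ and $h'':=I$. Nothing further is needed.
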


\begin{proof}
Let $H' \in \mathcal O(V' \cup V'',GL_k(\mathbb C))$ be as in Lemma \ref{cartanlem0}. Since $H'|_{V' \cap V''}=H|_{V' \cap V''}$, we can choose $h':=H'|_{V'}$, $h'':=I$.
\end{proof}

\begin{lemma}
\label{matrixlem}
Any analytic homomorphism $\varphi:\mathcal O|_{U}^{k} \rightarrow \mathcal O|_{U}^{l}$ is determined by a holomorphic function $\Phi \in \mathcal O\bigl(U,M_{l \times k}(\mathbb C)\bigr).$ 
\end{lemma}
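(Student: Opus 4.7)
The plan is to exhibit $\Phi$ explicitly by evaluating $\varphi$ on the obvious global generators of $\mathcal{O}^k|_U$, and then to check that $\varphi$ is recovered from $\Phi$ by matrix multiplication on each stalk.

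First, I would introduce the standard generators $e_1,\dots,e_k\in \Gamma(U,\mathcal{O}^k)$, where $e_i=(0,\dots,0,1,0,\dots,0)$ with the $1$ in the $i$-th slot. Since $\varphi$ is a homomorphism of sheaves, applying the induced map on global sections over $U$ yields
\[
\varphi_U(e_i)=(\Phi_{1i},\dots,\Phi_{li})\in\Gamma(U,\mathcal{O}^l),
\]
and each $\Phi_{ji}$ is by definition a holomorphic function on $U$ in the sense of subsection \ref{charts}. Assembling these entries we obtain a matrix-valued function $\Phi=(\Phi_{ji})\in\mathcal{O}(U,M_{l\times k}(\mathbb{C}))$ (the joint holomorphicity follows entry by entry).

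Next I would verify that $\varphi$ agrees with the homomorphism $\tilde\varphi:\mathcal{O}^k|_U\to \mathcal{O}^l|_U$ defined by $\tilde\varphi(f)=\Phi\cdot f$. It suffices to check this at the level of stalks. For any $x\in U$ the stalk $\phantom{}_{x}(\mathcal{O}^k)$ is the free $\phantom{}_{x}\mathcal{O}$-module on the germs $\phantom{}_{x}e_1,\dots,\phantom{}_{x}e_k$, so any germ in $\phantom{}_{x}(\mathcal{O}^k)$ can be written uniquely as $\sum_{i=1}^k f_i\cdot\phantom{}_{x}e_i$ with $f_i\in \phantom{}_{x}\mathcal{O}$. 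Because $\varphi$ is an $\mathcal{O}$-module homomorphism and sheaf homomorphisms commute with restriction to stalks,
\[
\varphi_x\Bigl(\sum_i f_i\cdot \phantom{}_{x}e_i\Bigr)=\sum_i f_i\cdot \phantom{}_{x}\!\varphi_U(e_i)=\sum_i f_i\cdot(\phantom{}_{x}\Phi_{1i},\dots,\phantom{}_{x}\Phi_{li})=\tilde\varphi_x\Bigl(\sum_i f_i\cdot \phantom{}_{x}e_i\Bigr).
\]
Hence $\varphi=\tilde\varphi$, so $\varphi$ is determined by $\Phi$.

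There is essentially no obstacle here beyond bookkeeping: the only point that needs a sentence of justification is that the global sections $e_i$ generate each stalk of $\mathcal{O}^k|_U$ freely as an $\phantom{}_{x}\mathcal{O}$-module, which is immediate from the definition of the direct sum sheaf $\mathcal{O}^k$. The matrix-valued holomorphy of $\Phi$ is simply the statement that a map into $M_{l\times k}(\mathbb{C})$ (a finite-dimensional space) is holomorphic iff all its scalar components are, which is built into Definition \ref{def2.2} and the trivialization of subsection \ref{charts}.
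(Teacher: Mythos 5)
Your proof is correct and is exactly the routine argument the paper has in mind: the paper gives no proof at all, stating only that the lemma ``follows directly from the definitions,'' and your evaluation of $\varphi$ on the standard generators $e_i$ followed by the stalkwise check using $\mathcal O$-linearity and the freeness of $\phantom{}_{x}(\mathcal O^k)$ is the standard way to make that precise.
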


The proof follows directly from the definitions.

\begin{defin}[see~\cite{Lemp}]
\label{adef2}
Let $\mathcal R$, $\mathcal B_i$, $1 \leq i \leq N$, be analytic sheaves over $U$.
We say that a sequence 
\begin{equation}
\label{aseq1}
\mathcal B_N \to \dots \to \mathcal B_2\to \mathcal B_1 \to \mathcal R \to 0
\end{equation}
is \textit{completely exact} if for any $m \geq 1$ 
the sequence of sections
\begin{equation*}
\Gamma(U,\Hom(\mathcal O^{m},\mathcal B_N)) \to \dots \\ \to \Gamma(U,\Hom(\mathcal O^{m},\mathcal B_1))\to \Gamma(U,\Hom(\mathcal O^{m},\mathcal R)) \to 0
\end{equation*}
or, equivalently,
\begin{equation}
\label{equivcond}
\Gamma(U,\mathcal B^m_N) \to \dots \to \Gamma(U,\mathcal B^m_1)\to \Gamma(U,\mathcal R^m) \to 0,
\end{equation}
is exact.

Here $\mathcal B_i^m$ and $\mathcal R^m$ stand for the direct product of $m$ copies of $\mathcal B_i$ and $\mathcal R$, respectively, and $\Hom(\mathcal O^{m},\mathcal B_i)$, $\Hom(\mathcal O^{m},\mathcal R)$ are the sheaves of germs of analytic homomorphisms $\mathcal O^{m} \rightarrow \mathcal B_i$, $\mathcal O^{m} \rightarrow \mathcal R$, respectively.
\end{defin}

Note that if sequence (\ref{equivcond}) is exact for $m=1$, then it is exact for all $m \geq 1$. 

The next two lemmas are due to Lempert \cite{Lemp}.

\begin{lemma}
\label{lem65}
Let $\mathcal B$, $\mathcal C$ be analytic sheaves on $U$.
If sequence $\mathcal B \overset{\gamma}{\rightarrow} \mathcal C \rightarrow 0$ is completely exact, and $\varphi:\mathcal O^k|_U \rightarrow \mathcal C$ is an analytic homomorphism, then there is an analytic homomorphism $\psi:\mathcal O^k|_U \rightarrow \mathcal B$ such that $\varphi=\gamma \psi$.
\end{lemma}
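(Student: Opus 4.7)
The statement is essentially an unpacking of the definition of complete exactness, so my plan is to verify that the requisite surjectivity on global sections of the $\Hom$ sheaves is exactly the definition, applied with $m=k$.

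First I would observe that an analytic homomorphism $\mathcal O^k|_U \to \mathcal C$ is by definition a global section of the sheaf $\Hom(\mathcal O^k,\mathcal C)$ over $U$, and analogously for $\mathcal B$ in place of $\mathcal C$. Thus $\varphi$ corresponds to an element $\widetilde{\varphi} \in \Gamma\bigl(U,\Hom(\mathcal O^k,\mathcal B)\bigr)$ (after identifying these spaces). The map $\gamma:\mathcal B \to \mathcal C$ induces a homomorphism of sheaves $\Hom(\mathcal O^k,\mathcal B) \to \Hom(\mathcal O^k,\mathcal C)$ by postcomposition, and the induced map on sections sends an analytic homomorphism $\psi:\mathcal O^k|_U \to \mathcal B$ to $\gamma \circ \psi$.

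Next, I would apply the definition of complete exactness (Definition \ref{adef2}) to the sequence $\mathcal B \overset{\gamma}{\rightarrow} \mathcal C \rightarrow 0$ with $m = k$. This yields the exact sequence of global sections
\begin{equation*}
\Gamma\bigl(U,\Hom(\mathcal O^k,\mathcal B)\bigr) \to \Gamma\bigl(U,\Hom(\mathcal O^k,\mathcal C)\bigr) \to 0,
\end{equation*}
where the first arrow is exactly postcomposition by $\gamma$. In particular, this map is surjective.

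Therefore, given the element $\widetilde{\varphi} \in \Gamma\bigl(U,\Hom(\mathcal O^k,\mathcal C)\bigr)$ corresponding to $\varphi$, there exists $\widetilde{\psi} \in \Gamma\bigl(U,\Hom(\mathcal O^k,\mathcal B)\bigr)$ with image $\widetilde{\varphi}$. Viewing $\widetilde{\psi}$ as an analytic homomorphism $\psi:\mathcal O^k|_U \to \mathcal B$, the relation $\gamma \circ \psi = \varphi$ holds, as required. There is no genuine obstacle here; the only thing to check carefully is the natural identification between analytic homomorphisms $\mathcal O^k|_U \to \mathcal F$ and global sections of $\Hom(\mathcal O^k,\mathcal F)$ over $U$, which is standard and follows from the sheaf-theoretic definition of $\Hom$.
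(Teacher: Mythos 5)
Your proof is correct and is essentially identical to the paper's (one-line) argument: complete exactness applied with $m=k$ gives surjectivity of the induced map $\Gamma\bigl(U,\Hom(\mathcal O^k,\mathcal B)\bigr)\to\Gamma\bigl(U,\Hom(\mathcal O^k,\mathcal C)\bigr)$, and $\psi$ is taken to be any preimage of $\varphi$. (One minor slip: early in your argument you place $\widetilde{\varphi}$ in $\Gamma\bigl(U,\Hom(\mathcal O^k,\mathcal B)\bigr)$ when it should be $\Gamma\bigl(U,\Hom(\mathcal O^k,\mathcal C)\bigr)$; you state it correctly later.)
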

\begin{proof}
We can take $\psi$ in the preimage of $\varphi$ under the surjective homomorphism
$$\bar{\gamma}:\Gamma(U,\Hom(\mathcal O^k,\mathcal B)) \rightarrow \Gamma(U,\Hom(\mathcal O^k,\mathcal C))$$ induced by $\gamma$ (see~Definition \ref{adef2}).
\end{proof}

\begin{lemma}[Three lemma]
\label{lem7}
Let $\mathcal A$, $\mathcal B$ and $\mathcal C$ be analytic sheaves on $U$.
Suppose that sequence $$0 \to \mathcal A \overset{\beta}{\to} \mathcal B \overset{\gamma}{\to} \mathcal C \to 0$$ is completely exact. 
If two among $\mathcal A$, $\mathcal B$ and $\mathcal C$ have free resolutions of length $N+n$, where $n:=\dim_{\mathbb C} U_0$, $N \geq n+2$, then the third has a free resolution of length $N-n-1$.
\end{lemma}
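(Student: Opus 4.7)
The plan is to treat three cases according to which two of $\mathcal A, \mathcal B, \mathcal C$ are assumed to carry free resolutions, and in each case build a resolution of the third by a sheaf-theoretic horseshoe construction. The two main tools will be Lemma \ref{lem65} (which lifts analytic homomorphisms out of free sheaves across a completely exact epimorphism) and Proposition \ref{vanprop}(1) (which converts length-$4N$ free resolutions into exactness of the truncated sequence of global sections for $N \geq n$). The decisive technical issue is that iterating the horseshoe construction requires complete exactness of each successive short exact sequence of syzygies, and verifying this is what eats into the length.

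For the case when $\mathcal A$ and $\mathcal C$ admit free resolutions $\mathcal F^A_\bullet \to \mathcal A \to 0$ and $\mathcal F^C_\bullet \to \mathcal C \to 0$ of length $N+n$, I would apply Lemma \ref{lem65} to the completely exact $\mathcal B \overset{\gamma}{\to} \mathcal C \to 0$ to lift $\varphi^C_0 : \mathcal F^C_0 \to \mathcal C$ to an analytic homomorphism $\tilde\varphi^C_0 : \mathcal F^C_0 \to \mathcal B$; then setting $\mathcal F^B_0 := \mathcal F^A_0 \oplus \mathcal F^C_0$ with structural map $\varphi^B_0 := (\beta \circ \varphi^A_0) + \tilde\varphi^C_0$ yields a commutative ladder whose snake lemma produces a short exact sequence of syzygies $0 \to \mathcal A_1 \to \mathcal B_1 \to \mathcal C_1 \to 0$. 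The outer syzygies $\mathcal A_1, \mathcal C_1$ inherit free resolutions of length $N+n-1$. Provided the new syzygy sequence is again completely exact, the construction reapplies to give $\mathcal F^B_1 := \mathcal F^A_1 \oplus \mathcal F^C_1$, and so on, ultimately producing an exact complex $\mathcal F^B_k \to \cdots \to \mathcal F^B_0 \to \mathcal B \to 0$.

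The hard part will be verifying complete exactness at each stage. A diagram chase reduces complete exactness of $0 \to \mathcal A_i \to \mathcal B_i \to \mathcal C_i \to 0$ to the surjectivity of $\Gamma(U, \mathcal F^A_i) \to \Gamma(U, \mathcal A_i)$ on global sections: given $c \in \Gamma(U,\mathcal C_i)$, its image $\tilde\varphi^C_i(c) \in \Gamma(U, \mathcal B_{i-1})$ lies in $\ker\gamma_*$ and hence equals $\beta(a)$ for some $a \in \Gamma(U, \mathcal A_{i-1})$ by inductive complete exactness; lifting $-a$ to $\Gamma(U, \mathcal F^A_i)$ then gives the desired preimage of $c$. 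By Proposition \ref{vanprop}(1) this lifting succeeds as long as the residual free resolution of $\mathcal A_i$ retains length at least $4n$. Tracking the bookkeeping: the residual lengths drop by one per iteration from $N+n$ down to the threshold required by Proposition \ref{vanprop}(1), and a further adjustment is needed to close off the final term of the resolution as free; the net effect is a length-$(N-n-1)$ free resolution of $\mathcal B$, positive under the hypothesis $N \geq n+2$.

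The remaining two cases proceed in parallel with minor modifications. When $\mathcal A$ and $\mathcal B$ have resolutions, I would take $\mathcal F^C_0 := \mathcal F^B_0$ with $\varphi^C_0 := \gamma \circ \varphi^B_0$ and observe that $\ker \varphi^C_0$ fits in a short exact sequence $0 \to \mathcal A \to \ker\varphi^C_0 \to \mathcal B_1 \to 0$; complete exactness of this sequence at each iteration is verified as above, and the horseshoe then extracts a resolution of $\mathcal C$. When $\mathcal B$ and $\mathcal C$ have resolutions, $\mathcal A = \ker \gamma$ is recovered by pulling the resolution of $\mathcal B$ back along $\beta$ and dividing out the resolution of $\mathcal C$, using Lemma \ref{lem65} to lift the quotient maps, and extracting syzygies inductively. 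In every case the length calculation follows the same $N+n \to N-n-1$ pattern, and the principal obstacle throughout is controlling precisely when Proposition \ref{vanprop}(1) can be invoked to propagate complete exactness down the tower of syzygies.
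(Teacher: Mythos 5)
Your horseshoe construction for the case where $\mathcal A$ and $\mathcal C$ are resolvable is a workable idea, and it is a genuinely different organization from the paper's: the paper never runs a term-by-term horseshoe, but instead proves one workhorse statement (Lemma \ref{lem6}: a sheaf sitting at the end of a sufficiently long completely exact sequence whose other terms admit free resolutions is itself resolvable) and reduces all three cases to it by splicing resolutions onto the given short exact sequence. Your mechanism for propagating complete exactness down the syzygy tower --- reduce surjectivity of $\Gamma(U,\mathcal B_i)\to\Gamma(U,\mathcal C_i)$ to surjectivity of $\Gamma(U,\mathcal F^A_i)\to\Gamma(U,\mathcal A_i)$ and invoke Proposition \ref{vanprop}(1) --- is essentially the same diagram chase that drives the proof of Lemma \ref{lem6}, so that part of the plan is sound.

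There are two genuine gaps. The first is the case where $\mathcal B$ and $\mathcal C$ are resolvable and $\mathcal A=\Ker\gamma$ must be resolved: ``pulling the resolution of $\mathcal B$ back along $\beta$ and dividing out the resolution of $\mathcal C$'' does not produce a \emph{free} resolution of $\mathcal A$, because the kernel of a homomorphism between free sheaves is not free and there is no dual horseshoe lemma resolving the sub-object from resolutions of the middle term and the quotient. The missing idea (which the paper supplies) is the auxiliary completely exact sequence $0\to\Ker\varphi\to\mathcal A\oplus\mathcal F_1\to\mathcal B\to 0$, built from a lift $\psi:\mathcal F_1\to\mathcal B$ (via Lemma \ref{lem65}) of the first step $\varphi:\mathcal F_1\to\mathcal C$ of $\mathcal C$'s resolution; here $\Ker\varphi$ is resolvable by Lemma \ref{lem55}, so the problem reduces to the cases you can already handle, and $\mathcal A$ is finally extracted from the split completely exact sequence $0\to\mathcal F_1\to\mathcal A\oplus\mathcal F_1\to\mathcal A\to 0$. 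The second gap is the length bookkeeping, which you correctly flag as the crux but then only assert: each invocation of Proposition \ref{vanprop}(1) at syzygy level $i$ needs the residual resolution of $\mathcal A_i$ (of length $N+n-i$) to be long enough to satisfy that proposition's hypothesis, and the naive count terminates well short of $N-n-1$ steps; making the constants come out requires the more careful accounting that the paper performs inside Lemma \ref{lem6}. (A minor further point: in your second case the syzygy extension runs $0\to\Ker\varphi^B_0\to\Ker(\gamma\circ\varphi^B_0)\to\mathcal A\to 0$, i.e.\ in the opposite order to what you wrote, and continuing the recursion there already presupposes the $(\mathcal A,\mathcal C)\Rightarrow\mathcal B$ case, so the three cases must be ordered with care.)
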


For the sake of completeness, we provide the proof of the lemma in the Appendix; but now we will use it in the proof of Lemma \ref{joinprop}.

\begin{proof}[Proof of Lemma \ref{joinprop}] We denote $U_1:=U_0 \times K_1$, $U_2:=U_0 \times K_2$. Let $N \geq n+1$. Consider free resolutions of $\mathcal R$ of length $M \geq 4N $,
\begin{equation}
\label{frres1}
\mathcal O^{k_{M,i}}|_{U_i} \to \dots \to
\mathcal O^{k_{1,i}}|_{U_i} \overset{\alpha_{i}}{\to} \mathcal R|_{U_i} \to 0, \quad i=1,2.
\end{equation}
Consider the end portions of (\ref{frres1}):
\begin{equation}
\label{s8}
\mathcal O^{k_i}|_{U_i} \overset{\alpha_{i}}{\to} \mathcal R|_{U_i} \to 0, \quad i=1,2.
\end{equation}
Let $U:=U_0 \times (K_1 \cup K_2)$.
We denote by $\pi_i:\mathcal O^{k_1}|_U \oplus \mathcal O^{k_2}|_U \rightarrow \mathcal O^{k_i}|_U$, $i=1,2$, the natural projection homomorphisms. 

First, let us show that there exists an injective analytic homomorphism
$H:\mathcal O^{k_1}|_U \oplus \mathcal O^{k_2}|_U \rightarrow \mathcal O^{k_1}|_U \oplus \mathcal O^{k_2}|_U$
such that $\alpha_1\pi_1 H=\alpha_2\pi_2$.
By Proposition \ref{vanprop}(1) sequence (\ref{frres1}) truncated to the $N$-th term  (and, hence, sequence (\ref{s8})) is completely exact. By Lemma \ref{lem65} we can factor
$\alpha_1=\alpha_2\psi$, $\alpha_2=\alpha_1\varphi$ on $U_1 \cap U_2$
for some analytic homomorphisms
$\psi:\mathcal O^{k_1}|_{U_1 \cap U_2} \rightarrow \mathcal O^{k_2}|_{U_1 \cap U_2} $,
$\varphi:\mathcal O^{k_2}|_{U_1 \cap U_2}  \rightarrow \mathcal O^{k_1}|_{U_1 \cap U_2} $.
Now, identifying sheaf homomorphisms $\psi$, $\varphi$ with the holomorphic matrix functions that determine them (see~Lemma \ref{matrixlem}), we define
\begin{equation*}
H=\left(
\begin{array}{cc}
I_{k_1}&\varphi \\
0& I_{k_2}
\end{array}
\right)
\left(
\begin{array}{cc}
I_{k_1}&0 \\
\psi& I_{k_2}
\end{array}
\right)^{-1} \in \mathcal O(U_1 \cap U_2,GL_k(\mathbb C)),
\end{equation*}
where $k:=k_1+k_2$.
It is immediate that $\alpha_1\pi_1H=\alpha_2\pi_2$. 
The map $H$ belongs to the connected component of the identity map in $\mathcal O(U_1 \cap U_2,GL_k(\mathbb C))$. Indeed, consider a path $H_{t} \in \mathcal O(U_1 \cap U_2,GL_k(\mathbb C))$ ($t \in [0,1]$),
\begin{equation*}
H_{t}:=\left(
\begin{array}{cc}
I_{k_1}&t\varphi \\
0& I_{k_2}
\end{array}
\right)
\left(
\begin{array}{cc}
I_{k_1}&0 \\
t\psi& I_{k_2}
\end{array}
\right)^{-1} ,
\end{equation*}
so that $H_{0}=I_k$, $H_{1}=H$.

Next, let $L_i \Subset K_i$, $L_i \in \mathfrak Q$ ($i=1,2$) and $V_0 \Subset U_0$ be an open polydisk, $x_0 \in V_0$. Let $L_i^m \in \mathfrak Q$ ($i=1,2$), $m \geq 1$, be the collection of open subsets of $K_i$ such that $L_i \Subset L_i^{m+1} \Subset L_i^m \Subset K_i$ for all $m \geq 1$ ($i=1,2$) obtained in Lemma \ref{exhlem}(3). 

Let $\{V_0^m\}$ be a collection of open polydisks such that $V_0 \Subset V_0^{m+1} \Subset V_0^m \Subset U_0$ for all $m \geq 1$.

We set $V_i^m:=V^m_0 \times L_i^m$,  $V_i:=V_0 \times L_i$ ($i=1,2$), $m \geq 1$.

We now amalgamate the free resolutions of $\mathcal R$ over $V_1\cup V_2$.

Let $m=1$.
By Corollary \ref{cartanlem} there exist functions $h_i \in \mathcal O(V^1_i,GL_k(\mathbb C))$ ($i=1,2$) such that $H=h_1h_2$ on $V^1_1 \cap V^1_2$.
Since $\alpha_1\pi_1 H=\alpha_2\pi_2$, 
the sheaf homomorphisms
\begin{equation*}
\alpha_1\pi_1h_1:\mathcal O^{k_1}|_{V^1_1} \oplus \mathcal O^{k_2}|_{V^1_1} \to \mathcal R|_{V^1_1} \to 0,
\end{equation*}
\begin{equation*}
\alpha_2\pi_2h_2^{-1}:\mathcal O^{k_1}|_{V^1_2} \oplus \mathcal O^{k_2}|_{V^1_2} \to \mathcal R|_{V^1_2} \to 0
\end{equation*}
coincide over $V^1_1 \cap V^1_2$; they induce an analytic homomorphism
\begin{equation*}
\alpha:\mathcal O^{k_1}|_{V^1_1 \cup V^1_2} \oplus \mathcal O^{k_2}|_{V^1_1 \cup V^1_2} \to \mathcal R|_{V^1_1 \cup V^1_2}.
\end{equation*}
Let $\mathcal R_1:=\Ker \alpha$. The sequence
\begin{equation*}
0 \to \mathcal R_1|_{V^1_1 \cup V^1_2} \to \mathcal O^{k_1}|_{V^1_1 \cup V^1_2} \oplus \mathcal O^{k_2}|_{V^1_1 \cup V^1_2} \overset{\alpha}{\to} \mathcal R|_{V^1_1 \cup V^1_2} \to 0
\end{equation*}
is completely exact over sets $V_1^1$ and $V_2^1$ since sequences (\ref{s8}) are. By Lemma \ref{lem7} the analytic sheaf $\mathcal R_1$ has free resolutions over $V_1^1$ and $V_2^1$ (of length $4N-2n-1$) because two other sheaves have. 

Provided that $M$ was chosen sufficiently large, we can repeat this construction $N-1$ times over subsets $V_1^m$, $V_2^m$, $1 \leq m \leq N-1$, obtaining in the end a free resolution of $\mathcal R$  over $V_1 \cup V_2$ having length $N$. Since $V_0$, $L_1$, $L_2$ and $N$ were arbitrary, the required result follows.
\end{proof}

\section{Proof of Theorem \ref{holmapthm}}

\begin{proof}
Let $y_0:=F(z_0)$ for some $z_0 \in M$. Then $y_0 \in \hat{X}_{H_0}$ for some $H_0 \in \Upsilon$ (see~subsection \ref{structsection}). Also, $y_0$ is contained in a coordinate chart $\hat{\Pi}(U_0,K) \subset c_{\mathfrak a}X$. In what follows, we identify $\hat{\Pi}(U_0,K)$ with $U_0 \times K$, see~subsection \ref{charts}, so that $y_0=(x_0,\eta_0)$ for some $x_0 \in U_0$, $\eta_0 \in K$.  Let $\pi_K: U_0\times K\to K$ be the natural projection. Then for each $h \in C(K)$ the pullback $h_K:=(\pi_K)^*h\in\mathcal O(U_0,K)$ and is constant on subsets $U_0\times\{\eta\}$ for all $\eta\in K$. Since $F$ is a holomorphic map, $F^*h_K$ is holomorphic on open subset $F^{-1}(U_0 \times K) \subset M$. Since the complex conjugate $\bar{h}_K$ of $h_K$ also belongs to $\mathcal O(U_0,K)$, the function $F^*\bar{h}_K=\overline{F^*h_K}$ is holomorphic on $F^{-1}(U_0 \times K)$ as well. Therefore, $F^*h_K$ must be locally constant. 
Let $W \subset M$ be the connected component of $F^{-1}(U_0 \times K)$ containing $z_0$; then $F^*h_K \equiv h(\eta_0)$ on $W$. 

Now, let us show that $F(W) \subset U_0 \times \{\eta_0\} \subset \hat{X}_{H_0}$. Indeed, there exist open subsets $L_\lambda \subset K$ ($\lambda \in \Lambda$) such that $\eta_0 \in L_\lambda$ for all $\lambda$ and $\cap_{\lambda \in \Lambda} L_\lambda=\{\eta_0\}$. Since $\hat{G}_{\mathfrak a}$ is a compact space and each subset $L_\lambda$ is open in $\hat{G}_{\mathfrak a}$, for every $\lambda$ there exists a continuous partition of unity subordinate to the open cover $\{L_\lambda,\hat{G}_{\mathfrak a} \setminus \{\eta_0\}\}$ of $\hat{G}_{\mathfrak a}$. We denote by $h_\lambda \in C(K)$ the restriction to $K$ of the element of the partition of unity with support in $L_\lambda$. Then
$0 \leq h_\lambda \leq 1$, $h_\lambda(\eta_0)=1$, $h_\lambda(\eta)=0$ on $K \setminus L_\lambda$. Since $F^*(h_\lambda)_K \equiv h_\lambda(\eta_0)=1$ for all $\lambda$, we obtain that $F(W) \subset U_0 \times L_\lambda$ for all $\lambda$; hence, $F(W) \subset U_0 \times \cap_{\lambda \in \Lambda} L_\lambda=U_0 \times \{\eta_0\} \subset \hat{X}_{H_0}$.

We have established that every point in $M$ has a neighbourhood $W$ such that $F(W) \subset \hat{X}_H$ for some $H \in \Upsilon$. Since $\hat{X}_{H_1} \cap \hat{X}_{H_2}=\varnothing$ if $H_1 \neq H_2$ and $M$ is connected, the latter implies that $F(M) \subset \hat{X}_{H_0}$ for a certain $H_0$ and completes the proof of the theorem.
\end{proof}

\section{Appendix}

The proof of Proposition \ref{frechetprop} essentially repeats the proof of an analogous result for coherent analytic sheaves on complex manifolds, see, e.g., \cite{GR}.

The proofs of the other results of this section follow closely the arguments in \cite{Lemp}.

\subsection{Proof of Proposition \ref{frechetprop}}


First, let $\mathcal A$ be a coherent subsheaf of $\mathcal O^k$ and let $U \in \mathfrak B$ (see~(\ref{base2})). 
By Lemma \ref{exhlem}(2) there exist open sets $V_k \in \mathfrak B$ such that $V_{k} \Subset V_{k+1} \Subset U$ for all $k$, and $U=\cup_k V_k$.
We endow space $\Gamma(U,\mathcal A)$ of sections of sheaf $\mathcal A$ over $U$ with the topology of uniform convergence on $\bar{V}_k$ for all $k$. Then $\Gamma(U,\mathcal A)$ becomes a metrizable vector space. We have to show that space $\Gamma(U,\mathcal A)$ is complete, i.e., it is a Fr\'{e}chet space. 

It is easy to see that space $\Gamma(U,\mathcal O^k)$ endowed with such topology is complete.
Since $\mathcal A$ is coherent, we may assume that there exists a free resolution (\ref{coh0}) of $\mathcal A$ over $U$ of length $4N$, $N>n:=\dim_{\mathbb C}X_0$. Therefore, we have a short exact sequence
\begin{equation*}
0 \to \Ker \varphi \overset{\iota}{\to}  \mathcal O^{m}|_U \overset{\varphi}{\to} \mathcal A|_U \to 0,
\end{equation*}
where $\iota$ denotes the inclusion. In the proof of Proposition \ref{vanprop}(1) we have shown that the sequence of sections
\begin{equation}
\label{exseq100}
0 \to \Gamma(U,\Ker \varphi) \overset{\bar{\iota}}{\to} \Gamma(U,\mathcal O^{m}) \overset{\bar{\varphi}}{\to} \Gamma(U,\mathcal A) \to 0
\end{equation}
is exact (see~Lemmas \ref{lem3comp} and \ref{sectionslem}). By our assumption $\Gamma(U,\mathcal A) \subset \Gamma(U,\mathcal O^k)$. By Lemma \ref{matrixlem} the $\Gamma(U,\mathcal O)$-module homomorphism $\bar{\varphi}:\Gamma(U,\mathcal O^m) \rightarrow \Gamma(U,\mathcal O^k)$ is determined by a $k \times m$ matrix with entries in $\mathcal O(U)$, hence it is continuous; further, $\bar{\iota}$ is continuous. Since sequence (\ref{exseq100}) is exact, $\Gamma(U,\Ker \varphi) \cong \Ker \bar{\varphi}$, hence $\Gamma(U,\Ker \varphi)$ is closed. Therefore, $\Gamma(U,\mathcal A)$, being a quotient of a complete space by its closed subspace, is a complete space.

We note that by the open mapping theorem the topology on $\Gamma(U,\mathcal A)$  coincides with the quotient topology determined by (\ref{exseq100}).

\medskip

Now, let $\mathcal A$ be an arbitrary coherent sheaf on $c_{\mathfrak a}X$. Similarly, we have a free resolution (\ref{coh0}) of $\mathcal A$ over a neighbourhood $U$ of length $4N$, $N>n$, which yields a short exact sequence of sheaves
\begin{equation}
\label{sectexseq2}
0 \to \Ker \varphi \overset{\iota}{\to} \mathcal O^{m}|_U \overset{\varphi}{\to} \mathcal A|_U \to 0
\end{equation}
and an exact sequence of sections
\begin{equation}
\label{sectexseq}
0 \to \Gamma(U,\Ker \varphi) \overset{\bar{\iota}}{\to} \Gamma(U,\mathcal O^{m}) \overset{\bar{\varphi}}{\to} \Gamma(U,\mathcal A) \to 0.
\end{equation}
Using Lemma \ref{lem7} (Three lemma), we obtain that $\Ker \varphi$ is a coherent subsheaf of $\mathcal O^{m}|_U$,
so by the previous part the subspace $\Gamma(U,\Ker \varphi) \subset \Gamma(U,\mathcal O^{m})$ is closed. We introduce in $\Gamma(U,\mathcal A)$ the quotient topology defined by (\ref{sectexseq}) which makes it a complete (i.e., Fr\'{e}chet) space
and also implies the last assertion of the proposition concerning the family of semi-norms determining the topology in $\Gamma(U,\mathcal A)$.

Let us show that thus defined topology on $\Gamma(U,\mathcal A)$ does not depend on the choice of resolution (\ref{sectexseq2}). Suppose that there is another resolution
\begin{equation*}
0 \to \Ker \varphi' \overset{\iota}{\to} \mathcal O^{m'}|_U \overset{\varphi'}{\to} \mathcal A|_U \to 0.
\end{equation*}
By Lemma \ref{lem5} there is a homomorphism $\psi:\mathcal O^{m}|_U \rightarrow \mathcal O^{m'}|_U$ such that the diagram of exact sequences of sheaves
\begin{equation*}
\bfig
\node n1(0,0)[\mathcal O^{m'}|_U]
\node n2(800,0)[\mathcal A|_U]
\node n3(1600,0)[0]
\node m1(0,500)[\mathcal O^{m}|_U]
\node m2(800,500)[\mathcal A|_U]
\node m3(1600,500)[0]
\arrow[n1`n2;\varphi']
\arrow[n2`n3;]
\arrow[m1`m2;\varphi]
\arrow[m2`m3;]
\arrow[m1`n1;\psi]
\arrow[m2`n2;\lambda]
\efig
\end{equation*}
is commutative. Therefore, we have a commutative diagram
\begin{equation*}
\bfig
\node n1(0,0)[\Gamma(U,\mathcal O^{m'})]
\node n2(800,0)[\Gamma(U,\mathcal A)]
\node n3(1600,0)[0]
\node m1(0,500)[\Gamma(U,\mathcal O^{m})]
\node m2(800,500)[\Gamma(U,\mathcal A)]
\node m3(1600,500)[0]
\arrow[n1`n2;\bar{\varphi}']
\arrow[n2`n3;]
\arrow[m1`m2;\bar{\varphi}]
\arrow[m2`m3;]
\arrow[m1`n1;\bar{\psi}]
\arrow[m2`n2;\bar{\lambda}]
\efig
\end{equation*}
of exact sequences of sections.
By our construction $\bar{\varphi}$, $\bar{\varphi}'$ are continuous and surjective, $\bar{\psi}$ is continuous as a homomorphism of sections of free sheaves. By the open mapping theorem the preimage of an open set by $\bar{\lambda}^{-1}=\bar{\varphi}\circ(\bar{\psi})^{-1}\circ(\bar{\varphi}')^{-1}$ is open, so $\bar{\lambda}$ is continuous and, hence, it is a homeomorphism.

\medskip

Finally, let $\gamma:\mathcal A \rightarrow \mathcal B$ be an analytic homomorphism. Let us show that $\gamma$ is continuous. Analogously to the previous part applying Lemma \ref{lem5} we obtain a commutative diagram of exact sequences of sheaves which yields a commutative diagram of exact sequences
\begin{equation*}
\bfig
\node n1(0,0)[\Gamma(U,\mathcal O^{m'})|_U]
\node n2(800,0)[\Gamma(U,\mathcal B)]
\node n3(1600,0)[0.]
\node m1(0,500)[\Gamma(U,\mathcal O^{m})]
\node m2(800,500)[\Gamma(U,\mathcal A)]
\node m3(1600,500)[0]
\arrow[n1`n2;\bar{\varphi}']
\arrow[n2`n3;]
\arrow[m1`m2;\bar{\varphi}]
\arrow[m2`m3;]
\arrow[m1`n1;\bar{\psi}]
\arrow[m2`n2;\bar{\gamma}]
\efig
\end{equation*}
As before, the continuity of $\bar{\gamma}$ can be deduced from the continuity of the other homomorphisms in the diagram. This completes the proof of the proposition.

\subsection{Proof of Lemma \ref{lem7}}


We will need the following lemmas.

\begin{lemma}
\label{lem5}
Let $\mathcal A$ be an analytic sheaf on $U$ that admits a free resolution of length $N$
\begin{equation}
\label{aseq4}
\mathcal F_N|_{U} \overset{\varphi_{N-1}}{\to} \dots \overset{\varphi_{2}}{\to} \mathcal F_2|_{U} \overset{\varphi_1}{\to} \mathcal F_1|_{U} \overset{\varphi_0}{\to} \mathcal A \to 0.
\end{equation}
Given a completely exact sequence of analytic sheaves $\mathcal B_i$ on $U$, $0 \leq i \leq N$,
\begin{equation}
\label{seq4}
\mathcal B_N \overset{\beta_{N-1}}{\to} \dots \to \mathcal B_2 \overset{\beta_1}{\to} \mathcal B_1 \overset{\beta_0}{\to} \mathcal B_0 \to 0
\end{equation}
a sheaf homomorphism $\Phi_0:\mathcal A \rightarrow \mathcal B_0$ can be extended to a homomorphism $\Phi_j: \mathcal F_j|_{U} \rightarrow \mathcal B_j$ ($0 \leq j \leq N$) of sequences (\ref{aseq4}), (\ref{seq4}).
\end{lemma}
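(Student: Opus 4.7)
The plan is to construct the sheaf homomorphisms $\Phi_j:\mathcal F_j|_U\to\mathcal B_j$ inductively for $j=1,\dots,N$, maintaining at each step the compatibility relation
$$\beta_{j-1}\Phi_j=\Phi_{j-1}\varphi_{j-1}$$
(writing $\mathcal F_0:=\mathcal A$), which is exactly what it means for the $\Phi_j$ together with the given $\Phi_0$ to form a morphism of sequences \eqref{aseq4} and \eqref{seq4}. The base case will use Lemma \ref{lem65} directly, while each inductive step will use the stronger hypothesis of complete exactness at an interior position of \eqref{seq4}.

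For the base case $j=1$, I will consider the analytic homomorphism $\Phi_0\varphi_0:\mathcal F_1|_U\to\mathcal B_0$. Because $\mathcal F_1$ is free and the tail $\mathcal B_1\overset{\beta_0}{\to}\mathcal B_0\to 0$ is completely exact (the rightmost fragment of a completely exact sequence is itself completely exact by the definition), Lemma \ref{lem65} produces the required $\Phi_1:\mathcal F_1|_U\to\mathcal B_1$ with $\beta_0\Phi_1=\Phi_0\varphi_0$.

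For the inductive step, suppose $\Phi_1,\dots,\Phi_j$ with $j\geq 1$ have been built and satisfy the compatibility relations. I will construct $\Phi_{j+1}:\mathcal F_{j+1}|_U\to\mathcal B_{j+1}$ such that $\beta_j\Phi_{j+1}=\Phi_j\varphi_j$. First, the composition $\Phi_j\varphi_j:\mathcal F_{j+1}|_U\to\mathcal B_j$ lies in the kernel of $\beta_{j-1}$ composition:
$$\beta_{j-1}\Phi_j\varphi_j=\Phi_{j-1}\varphi_{j-1}\varphi_j=0,$$
by the inductive hypothesis and exactness of \eqref{aseq4}. Writing $\mathcal F_{j+1}=\mathcal O^{m_{j+1}}$, I view $\Phi_j\varphi_j$ as a section $s\in\Gamma(U,\mathcal B_j^{m_{j+1}})$ annihilated by the induced map $\bar\beta_{j-1}$. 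Complete exactness of \eqref{seq4} applied with $m:=m_{j+1}$ (Definition \ref{adef2}) then asserts exactness of
$$\Gamma(U,\mathcal B_{j+1}^{m_{j+1}})\overset{\bar\beta_j}{\to}\Gamma(U,\mathcal B_j^{m_{j+1}})\overset{\bar\beta_{j-1}}{\to}\Gamma(U,\mathcal B_{j-1}^{m_{j+1}})$$
at the middle term, so $s$ lifts to a section of $\Gamma(U,\mathcal B_{j+1}^{m_{j+1}})$; this section is the desired $\Phi_{j+1}$.

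The argument is essentially a routine diagram chase and I do not expect a serious obstacle. The only point worth emphasizing is that Lemma \ref{lem65} by itself only supplies the base case (surjectivity at the rightmost term of a completely exact sequence), whereas the inductive step requires exactness at an interior position of the sequence of sections obtained after applying $\Gamma(U,\mathrm{Hom}(\mathcal O^{m_{j+1}},-))$; this is precisely the extra content packaged into complete exactness in Definition \ref{adef2} beyond ordinary exactness.
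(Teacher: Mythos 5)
Your proof is correct and follows essentially the same route as the paper: the paper also proves this by induction (with the convention $\varphi_{-1}:=\beta_{-1}:=0$), computes $\beta_{r-1}(\Phi_r\varphi_r)=\Phi_{r-1}\varphi_{r-1}\varphi_r=0$, and lifts via the exactness of $\Gamma(U,\Hom(\mathcal F_{r+1},\mathcal B_\bullet))$ guaranteed by complete exactness. Your explicit separation of the base case via Lemma \ref{lem65} is only a cosmetic repackaging of the same argument.
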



\begin{proof}
The proof is by induction. We put $\varphi_{-1}:=0$, $\beta_{-1}:=0$. Suppose that for $0 \leq j \leq r$, $r \leq N-1$ the homomorphisms $\Phi_j: \mathcal F_j|_{U} \rightarrow \mathcal B_j$ have been constructed, so that $\Phi_{j-1}\varphi_{j-1}=\beta_{j-1}\Phi_j$. If $r=N-1$, then we are done. For $r<N-1$
we have $\beta_{r-1}(\Phi_r \varphi_r)=\Phi_{r-1}\varphi_{r-1}\varphi_r=0$. The sequence 
\begin{equation*}
\Gamma(U, \Hom(\mathcal F_{r+1},\mathcal B_{r+1})) \to \dots \to \Gamma(U, \Hom(\mathcal F_{r+1},\mathcal B_{0})) \to 0
\end{equation*}
is exact since (\ref{seq4}) is completely exact (see~Definition \ref{adef2}), hence there is a homomorphism
$\Phi_{r+1} \in \Gamma(U, \Hom(\mathcal F_{r+1},\mathcal B_{r+1}))$ such that $\Phi_r \varphi_r=\beta_r \Phi_{r+1}$ over $U$, as required.
\end{proof}

\begin{lemma}
\label{lem55}
Given a free resolution (\ref{aseq4}) of an analytic sheaf $\mathcal A$ on $U$ of length $N$ the sheaf $\Ker \varphi_{n-1}=\Imag \varphi_n$ on $U$, $1 \leq n \leq N-1$, has a free resolution of length $N-n$.
\end{lemma}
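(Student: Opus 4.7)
The proof is essentially by truncation of the given resolution. Since the given sequence
$$\mathcal F_N|_{U} \overset{\varphi_{N-1}}{\to} \dots \overset{\varphi_{n+1}}{\to} \mathcal F_{n+1}|_{U} \overset{\varphi_n}{\to} \mathcal F_n|_{U} \overset{\varphi_{n-1}}{\to} \dots \overset{\varphi_0}{\to} \mathcal A \to 0$$
is exact, at each intermediate position we have $\Imag\, \varphi_j = \Ker\, \varphi_{j-1}$. In particular, $\Imag\, \varphi_n = \Ker\, \varphi_{n-1}$, so the map $\varphi_n$, viewed with codomain restricted to $\Ker\, \varphi_{n-1}$, is surjective. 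Call this restricted map $\tilde\varphi_n: \mathcal F_{n+1}|_U \to \Ker\, \varphi_{n-1}$.

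The plan is therefore to write down the candidate resolution
$$\mathcal F_N|_{U} \overset{\varphi_{N-1}}{\to} \dots \overset{\varphi_{n+2}}{\to} \mathcal F_{n+2}|_{U} \overset{\varphi_{n+1}}{\to} \mathcal F_{n+1}|_{U} \overset{\tilde\varphi_n}{\to} \Ker\, \varphi_{n-1} \to 0$$
and verify exactness position by position. Exactness at $\mathcal F_j$ for $n+2 \leq j \leq N-1$ is inherited directly from the exactness of the original resolution, since the morphisms $\varphi_{j-1}, \varphi_j$ are unchanged. Exactness at $\mathcal F_{n+1}$ amounts to $\Ker\, \tilde\varphi_n = \Ker\, \varphi_n = \Imag\, \varphi_{n+1}$, where the first equality holds because $\tilde\varphi_n$ only differs from $\varphi_n$ by restricting the codomain (which does not alter the kernel), and the second is exactness of the original sequence. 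Finally, exactness at $\Ker\, \varphi_{n-1}$ is just surjectivity of $\tilde\varphi_n$, which was observed above.

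The sheaves appearing as non-trivial terms are $\mathcal F_{n+1}, \mathcal F_{n+2}, \dots, \mathcal F_N$, which are all free, and their number is $N-n$; so the sequence above is a free resolution of $\Ker\, \varphi_{n-1}$ of length $N-n$, as required. No obstacle is expected here — the entire statement is a bookkeeping observation about the fact that kernels appearing inside a long free resolution are themselves resolved by the tail of that resolution.
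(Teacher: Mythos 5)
Your proof is correct and is exactly the truncation argument the paper has in mind: its own proof consists of the single line that the claim ``follows immediately from Definition \ref{adef23},'' and your position-by-position verification of exactness of the truncated tail (with $\varphi_n$ corestricted to $\Ker\varphi_{n-1}=\Imag\varphi_n$) is just that observation written out. Nothing further is needed.
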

\begin{proof}
Follows immediately from Definition \ref{adef23} of a free resolution of an analytic sheaf.
\end{proof}

\begin{lemma}
\label{lem6}
Let $\mathcal A_0$ be an analytic sheaf over $U$. Suppose that for a given $N \geq 1$ there exists a completely exact sequence 
of analytic sheaves $\mathcal A_i$ on $U$, $1 \leq i \leq 2N+2$,
\begin{equation}
\label{dseq1}
\mathcal A_{2N+2} \overset{\alpha_{M-1}}{\to} \dots \overset{\alpha_1}{\to} \mathcal A_1 \overset{\alpha_0}{\to} \mathcal A_0 \to 0
\end{equation}
such that sheaves $\mathcal A_i$, $1 \leq i \leq 2N+2$, have free resolutions of length $n+N$, where $n:=\dim_{\mathbb C}U_0$. 
 Then $\mathcal A_0$ has a free resolution of length $N$.
\end{lemma}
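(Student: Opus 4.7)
My plan is to cascade the Three Lemma (Lemma \ref{lem7}) along short exact sequences extracted from the given long completely exact sequence. Set $\mathcal K_0 := \mathcal A_0$ and, for $1 \le i \le 2N+2$, $\mathcal K_i := \ker(\alpha_{i-1}) = \Imag(\alpha_i)$ (with the convention $\mathcal K_{2N+2} := \ker \alpha_{2N+1}$). The long sequence decomposes into short exact sequences
$$0 \to \mathcal K_{i+1} \to \mathcal A_{i+1} \to \mathcal K_i \to 0,\qquad 0 \le i \le 2N+1,$$
each of which is completely exact by a direct diagram chase: for $\phi \colon \mathcal O^m \to \mathcal K_i$, its composition with the inclusion $\mathcal K_i \hookrightarrow \mathcal A_i$ is killed by $\alpha_{i-1}$, whence complete exactness of the long $\Hom$-sequence at $\mathcal A_i$ furnishes a lift to $\mathcal O^m \to \mathcal A_{i+1}$ whose image lies automatically in $\mathcal K_{i+1}$. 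I will also use freely that any free resolution of length $L \le L'$ extends trivially to length $L'$ by appending zero sheaves $\mathcal O^0$ (exactness at the newly inserted positions follows from the injectivity of the leftmost map of the original resolution).

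Next, I prove by downward induction on $i$ that each $\mathcal K_i$, for $0 \le i \le 2N+1$, admits a free resolution of length $n+N$. The inductive step is an immediate application of the Three Lemma at level $i$: given length-$(n+N)$ resolutions of $\mathcal A_{i+1}$ (by hypothesis) and $\mathcal K_{i+1}$ (by induction), Lemma \ref{lem7} yields a free resolution of $\mathcal K_i$ of length $N-n-1$, which I then pad up to length $n+N$. The base case $i=2N+1$ is the construction of a length-$(n+N)$ free resolution of $\mathcal K_{2N+1}$ directly from the data at the top of the sequence: I plan to lift the surjection $\alpha_{2N+1}\colon \mathcal A_{2N+2}\twoheadrightarrow \mathcal K_{2N+1}\hookrightarrow \mathcal A_{2N+1}$ to a chain map between the free resolutions of $\mathcal A_{2N+2}$ and $\mathcal A_{2N+1}$ via Lemma \ref{lem5}, and extract the required free resolution of $\mathcal K_{2N+1}$ from the resulting mapping cone (equivalently, a horseshoe-type splicing using Lemma \ref{lem55} to resolve the relevant kernel sheaves).

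With the inductive claim established down to $\mathcal K_1$, a final application of the Three Lemma to $0 \to \mathcal K_1 \to \mathcal A_1 \to \mathcal A_0 \to 0$ produces a free resolution of $\mathcal A_0$ of length $N-n-1$, which after padding with zero terms is the desired length-$N$ free resolution. The main obstacle I anticipate is the base case: turning the abstract surjection $\mathcal A_{2N+2}\twoheadrightarrow \mathcal K_{2N+1}$ into a genuine length-$(n+N)$ free resolution of $\mathcal K_{2N+1}$. The sheaf $\mathcal A_{2N+2}$ does not participate directly in the inductive cascade but is indispensable as the seed for this construction; the remaining $2N+1$ sheaves $\mathcal A_1,\ldots,\mathcal A_{2N+1}$ are consumed one-by-one in the $2N+1$ applications of the Three Lemma, exactly matching the $2N+2$ in the hypothesis. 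A subsidiary issue is the condition $N \ge n+2$ in Lemma \ref{lem7}, which can be arranged when $N$ is small by padding the input resolutions to a sufficient length before invoking the Three Lemma; the rest of the argument is unchanged.
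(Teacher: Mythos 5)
Your proposal has two fatal problems. First, it is circular within this development: your main engine is the Three Lemma (Lemma \ref{lem7}), but in the paper that lemma is \emph{deduced from} Lemma \ref{lem6} (case (a) of its proof reads ``Lemma \ref{lem6} implies that $\mathcal C$ has a free resolution of length $N$''), so Lemma \ref{lem6} has to be proved without it. Second, and independently of the circularity, the step you propose to ``use freely'' --- padding a free resolution of length $L$ up to length $L'>L$ by appending zero sheaves --- is false under Definition \ref{adef23}: a free resolution is only required to be exact at $\mathcal F_{N-1},\dots,\mathcal F_1,\mathcal R$, so the leftmost map $\varphi_{N-1}$ need \emph{not} be injective, and prepending $\mathcal O^0$ would force exactly that injectivity. (Extending instead by resolving $\Ker\varphi_{N-1}$ is not available either; producing such resolutions is the whole difficulty in this setting, where there is no Oka coherence theorem.) Without padding your cascade collapses on length bookkeeping: each application of Lemma \ref{lem7} turns inputs of length $N'+n$ into an output of length $N'-n-1$, a net loss of $2n+1$, and you need $2N+1$ applications starting from resolutions of length only $n+N$; the inequality $n+N-(2N+1)(2n+1)\geq 1$ fails for every $n\geq 1$, so the available length is exhausted long before the cascade reaches $\mathcal A_0$. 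Your base case is also only sketched: the map $\mathcal A_{2N+2}\to\mathcal A_{2N+1}$ whose mapping cone you would take is in general neither injective nor surjective, so its cone does not resolve $\Imag\alpha_{2N+1}$.

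The paper's proof avoids resolving the kernels $\Ker\alpha_i$ altogether, and this is the idea your proposal is missing. Using Lemma \ref{lem5} one lifts the given sequence to the bottom rows $\mathcal F_{1,k}\overset{\omega_k}{\to}\mathcal A_k$ of the given free resolutions and replaces each $\mathcal A_k$ ($k\geq 2$) by $\mathcal F_{1,k}\oplus\Ker\omega_{k-1}$; the point is that $\Ker\omega_{k-1}$ is the first syzygy of a \emph{free} resolution and therefore automatically has a free resolution of length $n+N-1$ by truncation (Lemma \ref{lem55}), so no new resolutions ever have to be manufactured. A diagram chase on $\Gamma(U,\Hom(\mathcal O^m,\cdot))$ shows the new, shorter sequence is still completely exact and now ends in a free sheaf surjecting onto $\mathcal A_0$. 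Iterating peels off one free term of the desired resolution of $\mathcal A_0$ per step, at the cost of two terms of the sequence and one unit of auxiliary resolution length per step; the constants $2N+2$ and $n+N$ in the hypothesis are calibrated exactly for the $N$ iterations needed.
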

\begin{proof}
Let $M:=2N+2$.

(1) First, we construct a completely exact sequence of length $M-2$ of the form
\begin{equation}
\label{seq2}
 \mathcal B_{M-2} \overset{\beta_{M-3}}{\to} \dots \overset{\beta_2}{\to} \mathcal B_2 \overset{\beta_1}{\to} \mathcal B_1 \overset{\varepsilon_0}{\to} \mathcal A_0 \to 0,
\end{equation}
where $\mathcal B_1=\mathcal O^k|_U$ for some $k \geq 0$ is a free sheaf and $\mathcal B_k$, $2 \leq k \leq M-2$, are analytic sheaves on $U$ having free resolutions of length $N+n-1$.
Let 
\begin{equation*}
 \mathcal F_{n+N,k} \to \dots \to \mathcal F_{1,k} \overset{\omega_k}{\to} \mathcal A_k \to 0
\end{equation*}
be a free resolution of $\mathcal A_k$, $1 \leq k \leq M$. By Lemma \ref{lem5} there exist analytic homomorphisms $\psi_k$ such that the diagram
\begin{equation}
\label{s3}
\bfig
\node e1(-500,0)[\mathcal A_M]
\node e2(-500,500)[0]
\node e3(-500,-500)[\mathcal F_{1,M}]
\node a31(700,500)[0]
\node a41(1400,500)[0]
\node a12(100,0)[\dots]
\node a32(700,0)[\mathcal A_2]
\node a42(1400,0)[\mathcal A_1]
\node a52(2100,0)[\mathcal A_0]
\node a62(2700,0)[0]
\node a13(100,-500)[\dots]
\node a33(700,-500)[\mathcal F_{1,2}]
\node a43(1400,-500)[\mathcal F_{1,1}]
\arrow[e1`e2;]
\arrow[e3`e1;\omega_M]
\arrow[e1`a12;\alpha_{M-1}]
\arrow[e3`a13;\psi_{M-1}]
\arrow[a12`a32;\alpha_2]
\arrow[a32`a31;]
\arrow[a42`a41;]
\arrow[a32`a42;\alpha_1]
\arrow[a42`a52;\alpha_0]
\arrow[a52`a62;]
\arrow[a13`a33;\psi_2]
\arrow[a33`a43;\psi_1]
\arrow[a33`a32;\omega_2]
\arrow[a43`a42;\omega_1]
\efig
\end{equation}
is commutative. Let us show that the sequence
\begin{equation}
\label{s4}
\mathcal F_{1,M} \oplus \Ker \omega_{M-1} \overset{\beta_{M-1}}{\to} 
\dots \overset{\beta_2}{\to} \mathcal F_{1,2} \oplus \Ker \omega_1 \overset{\beta_1}{\to} \mathcal F_{1,1} \overset{\beta_0}{\to} \mathcal A_0 \to 0
\end{equation}
truncated to term $\mathcal F_{1,M-2} \oplus \Ker \omega_{M-3}$
is completely exact. Here $\beta_0:=\alpha_0\omega_1$, $\beta_1:=\psi_1 - \iota_1$, where $\iota_k:\Ker \omega_k \hookrightarrow \mathcal F_{1,k}$ is an inclusion, and $\beta_k=(\iota_k \oplus \psi_{k-1})(\psi_k-\iota_k)$, $k \geq 2$.

Indeed, we apply to (\ref{s3}) and (\ref{s4}) left exact functor $\Gamma(U,\Hom(\mathcal E,\cdot))$,
where $\mathcal E$ is a free sheaf.
Let $$A_k:=\Gamma(U,\Hom(\mathcal E,\mathcal A_k)), \quad (0 \leq k \leq M),$$ $$F_k:=\Gamma(U,\Hom(\mathcal E,\mathcal F_{1,k})), \quad (1 \leq k \leq M).$$
Then we obtain commutative diagrams of abelian groups
\begin{equation}
\label{diag1}
\bfig
\node a12(0,0)[\dots]
\node a22(-700,0)[A_{M}]
\node a32(700,0)[A_2]
\node a42(1400,0)[A_1]
\node a52(2100,0)[A_0]
\node a13(0,-500)[\dots]
\node a23(-700,-500)[F_{M}]
\node a33(700,-500)[F_2]
\node a43(1400,-500)[F_1]
\node b1(-700,500)[0]
\node b2(700,500)[0]
\node b3(1400,500)[0]
\arrow[a22`a12;a_{M-1}]
\arrow[a12`a32;a_2]
\arrow[a32`a42;a_1]
\arrow[a42`a52;a_0]
\arrow[a52`a62;]
\arrow[a23`a13;p_{M-1}]
\arrow[a13`a33;p_2]
\arrow[a33`a43;p_1]
\arrow[a23`a22;w_{M}]
\arrow[a33`a32;w_2]
\arrow[a43`a42;w_1]
\arrow[a22`b1;]
\arrow[a32`b2;]
\arrow[a42`b3;]
\efig
\end{equation}
and
\begin{equation}
\label{s5}
F_{M} \oplus \Ker w_{M-1} \overset{b_{M-1}}{\to} \dots \overset{b_2}{\to} F_2 \oplus \Ker w_{1} \overset{b_1}{\to} F_1 \overset{b_0}{\rightarrow} A_0 \to 0.
\end{equation}
Note that complete exactness of sequence (\ref{s4}) truncated to term $\mathcal F_{1,M-2} \oplus \Ker \omega_{M-3}$ is equivalent by definition to the exactness of  sequence (\ref{s5}) truncated to term $F_{M-2} \oplus \Ker w_{M-3}$.
By Definition \ref{adef2} the middle row of (\ref{diag1}) is exact. 
Also, by Proposition \ref{vanprop}(1) each $w_k$, $1 \leq k \leq M$, is surjective, so the columns of (\ref{diag1}) are exact. Hence, we have analogous identities
\begin{equation}
\label{s6}
b_0=a_0w_1, \quad b_1=p_1-i_1, \quad b_k=(i_k\oplus p_{k-1})(p_k-i_k),
\end{equation}
where $i_k:\Ker w_k \hookrightarrow F_k$ is an inclusion.
Let us show that (\ref{s5}) is exact up to term $F_{M-2} \oplus \Ker w_{M-3}$. First, note that $b_0$ is surjective because both $a_0$ and $w_1$ are. Second, if $\xi \in \Ker b_0$, then $w_1(\xi) \in \Ker a_0=\Imag a_1=\Imag a_1w_2=\Imag w_1p_1$.
Here we have used the fact that $w_2$ is surjective. Let $w_1(\xi)=w_1(p_1(\zeta))$ and $\tau:=p_1(\zeta)-\xi \in \Ker w_1$, so that $\xi=b_1(\zeta,\tau) \in \Imag b_1$.
Third, if $1 \leq k \leq M-3$, and $(\xi,\eta) \in \Ker b_k=\Ker (p_k-i_k)$, then $\eta=p_k(\xi)$ and $0=w_k(p_k(\xi))=a_k(w_{k+1}(\xi))$; hence $w_{k+1}(\xi) \in \Imag a_{k+1}=\Imag a_{k+1}w_{k+2}=\Imag w_{k+1}p_{k+1}$. Choose $\zeta$ so that $w_{k+1}(\xi) =w_{k+1}(p_{k+1}(\zeta))$. Then $\tau:=p_{k+1}(\zeta)-\xi \in \Ker w_{k+1}$. We conclude that $(\xi,\eta)=b_{k+1}(\zeta,\tau) \in \Imag b_{k+1}$, i.e., sequence (\ref{s5}) is indeed exact up to term $F_{M-2} \oplus \Ker w_{M-3}$.

Now, by Lemma \ref{lem55} each $\mathcal F_{1,k} \oplus \Ker \omega_{k-1}$ has a free resolution of length $N+n-1$. Hence, if we take
\begin{equation*}
\mathcal B_1:=\mathcal F_{1,1}, \quad \varepsilon_0:=\beta_0, \quad \mathcal B_k:=\mathcal F_{1,k} \oplus \Ker \omega_{k-1}, \quad 2 \leq k \leq M-2,
\end{equation*}
we obtain the required completely exact sequence (\ref{seq2}).

\medskip

(2) Now, consider completely exact sequence obtained from (\ref{seq2}),
\begin{equation*}
\mathcal B_M  \overset{\beta_{M-1}}{\to}  \dots \overset{\beta_2}{\to} \mathcal B_2 \overset{\beta_1}{\to} \Ker \varepsilon_0 \to 0.
\end{equation*}
Applying case (1) to this sequence we obtain that there is a completely exact sequence
\begin{equation*}
\mathcal D_{M-4}  \to \dots \to \mathcal D_3 \to \mathcal D_2 \overset{\varepsilon_1}{\to} \Ker \varepsilon_0  \to 0,
\end{equation*}
where $\mathcal D_2$ is a free sheaf, and each sheaf $\mathcal D_k$, $3 \leq k \leq M-4$, has a free resolution of length $N+n-2$. Therefore, we have a completely exact sequence
\begin{equation*}
\mathcal D_{M-4}  \to \dots \to \mathcal D_3 \to \mathcal D_2 \overset{\varepsilon_1}{\to} \mathcal B_1 \to \mathcal A_0\to 0.
\end{equation*}
Continuing in this way (applying a similar argument to resolution of $\Ker \varepsilon_1$, etc.) we finally obtain a free resolution of $\mathcal A_0$ of length $N$. 
\end{proof}

\begin{proof}[Proof of Lemma \ref{lem7}]
We can assume that $\mathcal A \subset \mathcal B$ and that $\beta$ is the inclusion map.

(a) If $\mathcal A$ and $\mathcal B$ have free resolutions of length $N+n$, then Lemma \ref{lem6} implies that $\mathcal C$ has a free resolution of length $N$ (and, in particular, of length $N-n-1$).

Consider two remaining cases. Sheaf $\mathcal C$ has a free resolution of length $N+n$, 
\begin{equation}
\label{s9}
\mathcal F_{N+n} \to \dots \to \mathcal F_1 \overset{\varphi}{\to} \mathcal C \to 0
\end{equation}
for some open $V_0 \subset U_0$.
By Proposition \ref{vanprop}(1) sequence (\ref{s9}) is completely exact. By Lemma \ref{lem65} there is a commutative diagram
\begin{equation}
\label{s10}
\bfig
\node a1(0,0)[0]
\node a2(500,0)[\mathcal A]
\node a3(1000,0)[\mathcal B]
\node a4(1500,0)[\mathcal C]
\node a5(2000,0)[0]
\node b4(1500,-500)[\mathcal F_1]
\node c(1500,500)[0]
\arrow[a1`a2;]
\arrow[a2`a3;\beta]
\arrow[a3`a4;\gamma]
\arrow[a4`a5;]
\arrow[b4`a4;\varphi]
\arrow[b4`a3;\psi]
\arrow[a4`c;]
\efig.
\end{equation}
Let $\iota:\Ker \varphi \rightarrow \mathcal F_1$ denote the inclusion. Let us show that the sequence 
\begin{equation}
\label{s11}
0 \to \Ker \varphi \overset{\psi \oplus \iota}{\to} \mathcal A \oplus \mathcal F_1 \overset{\beta-\psi}{\to} \mathcal B \to 0
\end{equation}
is completely exact.

We apply functor $\Gamma(U,\Hom(\mathcal E,\cdot))$ to (\ref{s10}) and (\ref{s11}), where $\mathcal E$ is a free sheaf. We obtain diagrams of abelian groups
\begin{equation*}
\bfig
\node a1(0,0)[0]
\node a2(500,0)[A]
\node a3(1000,0)[B]
\node a4(1500,0)[C]
\node a5(2000,0)[0]
\node b4(1500,-500)[F_1]
\node c(1500,500)[0]
\arrow[a1`a2;]
\arrow[a2`a3;b]
\arrow[a3`a4;c]
\arrow[a4`a5;]
\arrow[b4`a4;f]
\arrow[b4`a3;p]
\arrow[a4`c;]
\efig
\end{equation*}
and
\begin{equation}
\label{s12}
0\to \Ker f \overset{p \oplus i}{\to} A \oplus F_1 \overset{b-p}{\to} B \to 0.
\end{equation}
The first diagram is commutative, its top row is exact (see~Definition \ref{adef2}). By Proposition \ref{vanprop}(1) we may assume that $f$ is surjective. The latter sequence is a complex and is exact at $\Ker f$. We have to check that it is exact at the next two terms. If $(\xi,\eta) \in \Ker(b-p)$ then $p(\eta)=b(\xi)=\xi$, $0=c(p(\eta))=f(\eta)$. Thus, $\eta \in \Ker f$ and $(\xi,\eta)=(p \oplus i)(\eta) \in \Imag (p \oplus i)$, hence (\ref{s12}) is exact in the middle term. On the other hand, if $\zeta \in B$, then with some $\eta \in F$
\begin{equation*}
-c(\zeta)=f(\eta)=c(p(\eta)), 
\end{equation*}
i.e.,
\begin{equation*}
\zeta+p(\eta)=\xi \in \Ker c=A.
\end{equation*}
Thus, $\zeta=\xi-p(\eta) \in \Imag(b-p)$. We obtain that sequence (\ref{s12}) is exact, hence sequence (\ref{s11}) is completely exact. 

Note that by Lemma \ref{lem55} $\Ker \varphi$ has a free resolution of length $N+n-1$.

\medskip

(b) The sheaf $\mathcal A \oplus \mathcal F_1$ has a free resolution of length $N+n-1$ over $U_0 \times K$. By Lemma \ref{lem6} sheaf $\mathcal B$ has a free resolution of length $N-1$ over $U_0 \times K$ (in particular, of length $N-n-1$). 

\medskip

(c) 
We may assume that $\mathcal B$ has a free resolution of length $N+n-1$ over $V_0 \times K$.
Since  $\Ker \varphi$ has a free resolution of length $N+n-1$, 
by (b) $\mathcal A \oplus \mathcal F$ has a free resolution of length $N-1$. Since sequence $0 \rightarrow \mathcal F \rightarrow \mathcal A \oplus \mathcal F \rightarrow \mathcal A \rightarrow 0$
is completely exact as $\mathcal F$ is a free sheaf (see~Corollary \ref{cechlem1}), we obtain by part (a) that $\mathcal A$ has a free resolution of length $N-n-1$.
\end{proof}

\addtocontents{toc}{\protect\setcounter{tocdepth}{1}}

\end{document}